\newcommand{\R}{\mathbb{R}\mkern1mu}
\newcommand{\Z}{\mathbb{Z}\mkern1mu}
\newcommand{\N}{\mathbb{N}\mkern1mu}
\newtheorem{theorem}{Theorem}
\newtheorem{lemma}{Lemma}
\newtheorem{cor}{Corollary}
\theoremstyle{remark}
\newtheorem{rem}{Remark}
\DeclareMathOperator{\Id}{Id}
\title{Fitting lines to points in the plane}
\author{Annett P\"uttmann}
\date{\today}
\begin{document}
\maketitle 
\begin{abstract}
We seek for lines of minimal distance to finitely many given points in the plane.
The distance between a line and a set of points is defined by the $L^p$-norm, $1\leq p\leq \infty$, 
of the vector of vertical or orthogonal distances from the single points to the line.  

The known properties of optimal lines are deduced by elementary considerations and 
represented using a uniform language for the different choices to define the distance from a line 
to a set of points.
\end{abstract}
\section{Introduction}
This article deals with an elementary problem.
We seek for a line that is as close as possible to a finite set of points in the plane.
The regression line is a well-known solution and it is even easy to compute.
However, it is only one out of many possible answers, since the optimal line depends on the
definition of the distance between a line and a set of points. 
\subsection{Distance between a line and a set of points}
There are several useful ways to measure the distance between a line and a finite set of points.
Usually, this is done in two steps:
First the distance $d_j$ between a single point $p_j$ and a line is defined.
Then these distances  are combined to a distance between the line and the point set  
$\{ p_1,\ldots,p_m\}$.

We consider the vertical and the orthogonal distance between a point and a line.
The vertical distance between a point $(x_j,y_j)$ and the graph of a linear function 
$y(x) =ax+b$ is defined by $d_j=|y_j-ax_j-b|$. 
To emphasize that the solution should be a graph of a (linear) function 
this distance is also called algebraic distance.
The orthogonal distance between a line $g$ and a point $p_j$ is the length 
of the line segment connecting $p_j$ and $g$ perpendicular to $g$.
To compute this distance the euclidian inner product is used. 
Therefore, this distance is also called euclidian or geometric distance.

The distance between a point set $\{ p_1,\ldots,p_m\}$ and a line is defined by the
$L^p$-norm of the vector $(d_1,\ldots,d_m)\in R^m$, $1\leq p\leq \infty$.
For $p=2$ we want to minimize the sum of squares $d_1^2+\ldots+d_m^2$ 
of the distances of the single points. This is the classical method of least squares.
The $L^2$-norm is preferred in statistics. 
For $p=1$ and $p=\infty$ we want to minimize the sum $d_1+\ldots+d_m$ respectively 
the maximum $\max\{ d_1,\ldots,d_m\}$  of the distances of the single points.
These quantities occur in optimization problems.
Then the $L^p$-norm is a natural generalization.
Furthermore, the investigation of the function $d_1^p+\ldots+d_m^p$ for all $1\leq p$
connects the extreme norms ($p=1$, $p=\infty$) to the case $p=2$.
\subsection{Goals}
The results of this article are known, partially for a long time,
(e.g.~\cite{A}, \cite{BS}, \cite{C}, \cite{K}, \cite{P}, \cite{S}, \cite{SW}) 
or can be  rather easily derived for some distances.
But the facts are spread and are often formulated according to the needs of the applications.
In this article the results are presented in a uniform language accessible to a broad audience.
The results are deduced by elementary considerations 
that require only basic linear algebra and calculus.
Therefore, the appendix contains the necessary facts on convex sets and convex functions.

Due to the homogeneous representation one can easily observe how 
the properties of the objective function $f$, the suitable methods and the set of optimal lines change
using different definitions of the distance. Here are two examples:
The algebraic distance leads to convex or even strictly convex functions.
Using the geometric distance we loose the convexity in one variable but we gain the compactness of the
domain of definition of $f$.
Differential calculus provides explicit  formulas of optimal lines for $p=2$.
For $p=1$ and  $p=\infty$ the function $f$ is not even differentiable at all points but it is piecewise linear.
So the global minimum can be determined comparing finitely many values of $f$.
 
Despite the differences of the distances investigated here 
there is one basic idea behind all the solutions. 
Properties of optimal lines are obtained observing the behavior of $f$ 
while translating and rotating a line.
If $f$ is differentiable, then partial differentiation yields a system of equations for the critical points of $f$,
else this approach gives  at least information about the location of optimal lines.

Moreover, dealing with this elementary optimization problem, 
that can be satisfactory solved by simple arguments in many cases, 
motivates further questions in different fields of mathematics.
For example, what are the effects of small changes of the given point set or the parameter $p$. 
If explicit formulas of the optimal lines are not available, then one needs 
fast algorithms to manage the input data ($p=1$ and $p=\infty$)
or to numerically approximate the solution  ($p\ne 1,2,\infty$).
It is even more challenging to fit other objects such as circles or ellipses to point sets  \cite{C}.
In these cases the basic questions of existence and uniqueness of objects with minimal distance are
already more difficult to answer.
\subsection{Symmetries}
The algebraic and the geometric distance and the associated optimal lines, behave differently
regarding coordinate changes.

The geometric distances are by definition invariant under isometries, these are
translations, rotations and reflections.
The algebraic distances are invariant under translations and 
reflections in the coordinate axes or a point. 

If the distance is invariant under an (affine) transformation $A:\R^2\to\R^2$,
then a line $g$ has minimal distance to the point set $\{p_1,\ldots,p_m\}$ if and only if
the line $A(g)$ has minimal distance to the point set $\{A(p_1),\ldots,A(p_m)\}$, 
i.e., the set of optimal lines is equivariant with respect to transformations
which do not change the distance.
In addition, the set of algebraically optimal lines is equivariant 
with respect to scalings of the coordinates, 
i.e., $(x,y)\mapsto (\lambda x,\mu y)$ with $0\ne \lambda,\mu \in \R$, and
the set of geometrically optimal lines is equivariant with respect to dilations, 
i.e.~$(x,y)\mapsto (\lambda x,\lambda y)$ with $0\ne \lambda \in \R$.

It is useful to consider the symmetries of the point set in order to
find the appropriate definition of a distance for a specific application, 
to restrict the domain of definition of $f$, 
or to simplify determination of the optimal line for strictly convex functions $f$, 
since the optimal line is unique in that case.
\subsection{Point sets with multiplicities}
In this article we work with the standard assumption that the given points $p_1,\ldots,p_m$ are pairwise
distinct, i.e., $p_j\ne p_k$ for all $j\ne k$, and $m\geq 2$.
Investigating the algebraic distances we additionally assume that the $x$-coordinates
$x_1,\ldots x_m$ of the points $p_j=(x_j,y_j)^T$ are pairwise distinct. 
It is easier to formulate the results in this context.

For each distance considered  here we discuss how the main statement changes 
if the standard assumption is omitted.
Therefore, our set up covers also distances which come from weighted norm on $(d_1,\ldots,d_m)$, 
e.g., $\sum_{j=1}^m n_jd_j^p$ with $n_j\in\N$.
\section{Method of least squares - $L^2$-norm}
\label{sec:L2}
In this section we minimize the $L^2$-norm of $(d_1,\ldots,d_m)$.
We consider the function $f(g)= \sum_{j=1}^m d_j^2$.
The function $f$ is a quadratic polynomial in the parameters of a line 
for the algebraic and the geometric distance.
Using differential calculus we derive explicit formulas of the optimal lines.
\subsection{Linear regression - minimal algebraic $L^2$-distance}
\label{sec:LineareRegression}
Given points $p_j=(x_j,y_j)^T\in \R^2$, $j=1,\ldots,m$, we determine a linear function
$y(x)=ax+b$, $a,b\in\R$, with minimal algebraic $L^2$-distance to the point  set  $\{Êp_1,\ldots,p_m\}$, i.e., the minimum of the function $f:\R^2\to\R$ defined by  
$$f(a,b) := \sum_{j=1}^n (y_j-y(x_j))^2 = \sum_{j=1}^m (y_j-ax_j-b)^2.$$
This approach excludes lines which are parallel to the $y$-axis ($x=c$).
Therefore, we assume $x_j\ne x_k$ for all $j\ne k$.
In particular, the points $p_1\ldots,p_m$ are pairwise distinct.
\subsubsection{Critical points}
The function $f$ is differentiable and
\begin{align*}
df & = (f_a,f_b) = \left( -2\sum_{j=1}^m (y_j-ax_j-b)x_j,  -2\sum_{j=1}^m (y_j-ax_j-b)\right) \\
& = 2\left( -\sum_{j=1}^m x_jy_j+a\sum_{j=1}^mx_j^2+b\sum_{j=1}^mx_j,  
	-\sum_{j=1}^m y_j+a\sum_{j=1}^m x_j + mb\right) .
\end{align*}
The critical points of $f$ are given by the solution of the following system of equations:
\begin{equation}
\label{eq:kritischer Punkt-lineare Regression}
a\sum_{j=1}^mx_j^2+b\sum_{j=1}^mx_j = \sum_{j=1}^m x_jy_j , \quad
	a\sum_{j=1}^m x_j + bm = \sum_{j=1}^m y_j
\end{equation}
\subsubsection{Convexity}
The function $f$ is convex, because the summands $(ax_j+b-y_j)^2$ are convex functions. 
These summands are strictly convex if and only if $x_j\ne 0$.
Since the inequality $x_j\ne 0$  holds for at least one index  $j$, the function $f$ is strictly convex.
This can be checked directly using the Hesse-matrix $H_f$:
$$H_f =  \begin{pmatrix}\sum_{j=1}^mx_j^2 & \sum_{j=1}^mx_j \\ \sum_{j=1}^mx_j & m \end{pmatrix},
	\quad  \det(H_f) = m\sum_{j=1}^mx_j^2-\left(\sum_{j=1}^mx_j \right)^2$$
It follows from the Schwarz inequality that 
$$\left(\sum_{j=1}^m x_j\right)^2 =\left(\sum_{j=1}^m x_j\cdot 1\right)^2 
	\leq \left( \sum_{j=1}^m x_j^2\right)\left(\sum_{j=1}^m 1^2\right) = m \sum_{j=1}^mx_j^2,$$
where equality holds if and only if $\vec{x}=(x_1,\ldots,x_m)$ is a multiple of the vector $(1,\ldots,1)$. 
Since $x_1,\ldots,x_m$ are pairwise distinct, we have $\sum_{j=1}^mx_j^2 >0$ and  $\det(H_f) >0$.
The matrix $H_f$ is strictly positive definite at all points $(a,b)\in\R^2$. 

There exists exactly one critical point, since the function $f$ is strictly convex.
The function $f$ attains its local and global minimum at this unique critical point.
\subsubsection{Globale minimum}
The solution of the system of linear equations  (\ref{eq:kritischer Punkt-lineare Regression}) 
gives the global minimum of the function $f:\R^2\to\R$. 
The second equation means that the optimal line contains the center of mass of the points
$p_1,\ldots,p_m$
\begin{equation}
\bar{p}Ê:= \frac{1}{m} \sum_{j=1}^m p_j = (\bar{x},\bar{y})^T \text{ with }
	\bar{x} = \frac{1}{m}\sum_{j=1}^m x_j,\, \bar{y} = \frac{1}{m}\sum_{j=1}^m y_j.
\end{equation}
Thus $b = \bar{y}-a\bar{x}$.

Moving the origin to the center of mass we obtain the new coordinates
$\tilde{x}:=x-\bar{x}$, $\tilde{y} := y-\bar{y}$, $\tilde{x}_j:=x_j-\bar{x}$ and $\tilde{y}_j:=y_j-\bar{y}$.
System (\ref{eq:kritischer Punkt-lineare Regression}) transforms into the equations
$\tilde{a}\sum_{j=1}^m \tilde{x}_j^2 = \sum_{j=1}^m \tilde{x}_j\tilde{y}_j$ and  $\tilde{b} = 0$ 
for the optimal line $\tilde{y} = \tilde{a}\tilde{x}+\tilde{b}$, 
since $\sum_{j=1}^m \tilde{x}_j = \sum_{j=1}^m \tilde{y}_j = 0$.
Set
\begin{equation}
S_{xx} := \sum_{j=1}^m \tilde{x}_j^2 = \sum_{j=1}^m (x_j-\bar{x})^2, \, 
S_{xy} := \sum_{j=1}^m \tilde{x}_j\tilde{y}_j = \sum_{j=1}^m (x_j-\bar{x})(y_j-\bar{y}).
\end{equation}
\begin{theorem}
\label{satz:eindeutigeRegressionsgerade}
Let $p_j=(x_j,y_j)^T$, $j=1,\ldots,m$,  such that $x_j\ne x_k$ for all $j\ne k$.\\
There exists a unique linear function $y(x) = ax+b$ with minimal algebraic $L^2$-distance to the set
$\{Êp_1,\ldots,p_m\}$. 
This function has slope $S_{xy}/S_{xx}$ and its graph
contains the center of mass of the set $\{ p_1,\ldots, p_m\}$, i.e.,
\begin{equation}
y(x) = \frac{S_{xy}}{S_{xx}}(x-\bar{x}) + \bar{y}.
\end{equation}
\end{theorem}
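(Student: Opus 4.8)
The plan is to assemble the theorem directly from the three ingredients already established in the preceding subsections, so that the ``proof'' is really a bookkeeping argument. First I would invoke the convexity analysis: since $f$ is strictly convex on $\R^2$ (the Hesse matrix $H_f$ is positive definite everywhere, as shown via the Schwarz inequality and the hypothesis that the $x_j$ are pairwise distinct), any critical point is the unique global minimizer. Hence it suffices to exhibit one critical point and read off its coordinates; uniqueness of the optimal line then follows immediately, giving the ``unique linear function'' clause of the statement.

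Next I would solve the normal equations \eqref{eq:kritischer Punkt-lineare Regression}. The cleanest route, already indicated in the text, is to pass to centered coordinates $\tilde x_j = x_j-\bar x$, $\tilde y_j = y_j-\bar y$: the second normal equation becomes $\tilde b = 0$, which says the optimal line passes through the center of mass $\bar p$, and the first becomes $\tilde a\,S_{xx} = S_{xy}$. Here I should note that $S_{xx} = \sum_j(x_j-\bar x)^2 > 0$ precisely because the $x_j$ are not all equal (equivalently, $\det H_f > 0$ forces $S_{xx}>0$), so the slope $\tilde a = S_{xy}/S_{xx}$ is well defined. Translating back, $a = \tilde a = S_{xy}/S_{xx}$ and $b = \bar y - a\bar x$, which yields the displayed formula $y(x) = \frac{S_{xy}}{S_{xx}}(x-\bar x) + \bar y$.

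The only genuine verification needed is that the centered system is equivalent to the original system \eqref{eq:kritischer Punkt-lineare Regression}; this is a routine substitution using $\sum_j \tilde x_j = \sum_j \tilde y_j = 0$, plus the algebraic identities $\sum_j x_j y_j - m\bar x\bar y = S_{xy}$ and $\sum_j x_j^2 - m\bar x^2 = S_{xx}$, so I would not grind through it in detail. I expect no real obstacle here: the strict convexity does all the heavy lifting (existence, uniqueness, and the fact that ``critical point'' equals ``global minimum''), and what remains is linear algebra. If anything, the one point to state carefully is why $S_{xx} \neq 0$, since dividing by it is the only place the pairwise-distinctness hypothesis is used beyond the convexity argument — and that is exactly the equality case of the Schwarz inequality already discussed.
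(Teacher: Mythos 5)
Your proposal is correct and follows the same route as the paper: the paper's one-line proof ("the translation does not change the slope") simply assembles the strict-convexity argument, the normal equations, and their centered form $\tilde a S_{xx}=S_{xy}$, $\tilde b=0$, all established in the preceding subsections, exactly as you do. Your explicit remark on why $S_{xx}>0$ (noting $\det H_f = mS_{xx}$) is a welcome, if minor, addition of care.
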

\begin{proof}
The translation $(x,y)\mapsto(\tilde{x},\tilde{y})$ does not change the slope.
\end{proof}
\begin{cor}
\label{folg:algL2Spiegelsymmetrie}
If the set $\{ p_1,\ldots,p_m\}$ is invariant under the reflection in the line $x=\bar{x}$, then $S_{xy}=0$.
\end{cor}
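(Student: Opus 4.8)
The plan is to exploit the reflection symmetry directly in the centered coordinates, where $S_{xy} = \sum_{j=1}^m \tilde{x}_j\tilde{y}_j$ with $\tilde{x}_j = x_j - \bar{x}$ and $\tilde{y}_j = y_j - \bar{y}$. First I would observe that the reflection in the line $x = \bar{x}$ is the map $(x,y) \mapsto (2\bar{x} - x, y)$, which in centered coordinates is simply $(\tilde{x},\tilde{y}) \mapsto (-\tilde{x},\tilde{y})$. The hypothesis says this map permutes the point set; that is, there is a permutation $\sigma$ of $\{1,\ldots,m\}$ with $\tilde{x}_{\sigma(j)} = -\tilde{x}_j$ and $\tilde{y}_{\sigma(j)} = \tilde{y}_j$ for every $j$.

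Next I would compute $S_{xy}$ by reindexing the sum through $\sigma$. Since $\sigma$ is a bijection, $\sum_{j} \tilde{x}_j \tilde{y}_j = \sum_{j} \tilde{x}_{\sigma(j)} \tilde{y}_{\sigma(j)} = \sum_{j} (-\tilde{x}_j)(\tilde{y}_j) = -S_{xy}$, so $2 S_{xy} = 0$ and hence $S_{xy} = 0$. This is the whole argument; by Theorem~\ref{satz:eindeutigeRegressionsgerade} it then follows that the unique regression line has slope $S_{xy}/S_{xx} = 0$, i.e., it is the horizontal line $y = \bar{y}$, which is itself invariant under the reflection — consistent with the equivariance remarks, though the corollary as stated only asserts $S_{xy}=0$.

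The only subtlety worth a sentence is that the reflection need not fix each point individually; a point on the axis $x = \bar{x}$ is its own image, but points off the axis are paired up, and a priori $\sigma$ could be any symmetry-respecting permutation. The reindexing argument handles all cases uniformly because it only uses that $\sigma$ is a bijection and that it negates $\tilde{x}$ while preserving $\tilde{y}$; I do not need to split into fixed points versus transposed pairs. I expect no real obstacle here — the result is an immediate consequence of the odd symmetry of the summand under the induced involution — so the proof will be just two or three lines.
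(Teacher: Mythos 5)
Your proof is correct, but it takes a genuinely different route from the paper. You prove $S_{xy}=0$ by pure bookkeeping: the reflection $(x,y)\mapsto(2\bar{x}-x,y)$ becomes $(\tilde{x},\tilde{y})\mapsto(-\tilde{x},\tilde{y})$ in centered coordinates, invariance of the (finite, pairwise distinct) point set gives a permutation $\sigma$ with $\tilde{x}_{\sigma(j)}=-\tilde{x}_j$, $\tilde{y}_{\sigma(j)}=\tilde{y}_j$, and reindexing yields $S_{xy}=-S_{xy}$. The paper argues in the opposite direction: it invokes the uniqueness of the optimal linear function (Theorem \ref{satz:eindeutigeRegressionsgerade}) together with the equivariance of the set of optimal lines under distance-preserving transformations, concludes that the unique regression line is itself invariant under the reflection, hence (not being the vertical axis) perpendicular to $x=\bar{x}$, i.e.\ of slope zero, which forces $S_{xy}=0$. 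Your computation is more elementary and self-contained -- it needs neither the uniqueness theorem nor the equivariance discussion, and it would still prove $S_{xy}=0$ in degenerate situations where uniqueness fails. The paper's argument, on the other hand, is the template it reuses later (Corollaries \ref{folg:algLinfSpiegelsymmetrie} and \ref{folg:alg-Lp-Spiegelsymmetrie}) for the $L^\infty$ and $L^p$ distances, where no closed formula like $S_{xy}$ is available and only the uniqueness-plus-symmetry reasoning applies; so each approach buys something: yours is shorter and assumption-light here, the paper's scales to the other norms. Your closing remark that the regression line is then $y=\bar{y}$ is a correct consequence, not needed for the statement.
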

\begin{proof}
Since the optimal linear function is unique, it is invariant under the reflection in the line $x=\bar{x}$.
The graph of the optimal function is not the line $x=\bar{x}$. Thus, it is perpendicular to $x=\bar{x}$. 
\end{proof}
\begin{rem}
Theorem \ref{satz:eindeutigeRegressionsgerade} remains true 
if the condition $x_j\ne x_k$ for all $j\ne k$ is weakened to 
there existence of  indices $j\ne k$ satisfying $x_j\ne x_k$.
However, if $x_1=\ldots = x_m = \bar{x}$, then the graph of any linear function 
$y(x) = a(x-\bar{x})+\bar{y}$ with $a\in\R$ is an algebraically $L^2$-optimal line.
\end{rem}
\subsubsection{Generalizations}
Similarly, there exists a unique linear function $y(x_1,\ldots,x_n) = \sum_{i=1}^n a_ix_i+a_0$
having minimal algebraic $L^2$-distance to a given finite set in $\R^{n+1}$. 
Whenever a finite set in the plane has to be approximated by functions 
which are linear in the parameters to be optimized, the method that worked in this section 
can be applied, because the critical points of the convex objective function are the solution of a linear system of equations.
\subsection{Minimal geometric $L^2$-distance}
\label{sec:Euklidisch-quadratischesMittel}
Given pairwise distinct points $p_j\in \R^2$, $j=1,\ldots,m$, we determine lines
$g\subset \R^2$ with minimal geometric $L^2$-distance to the point set $\{Êp_1,\ldots,p_m\}$, 
i.e., the minimum of the function $f:\R\times S^1\to\R$ defined by
$$f(c,n) := \sum_{j=1}^n (c-\langle p_j,n\rangle)^2 .$$
Here, a line $g\subset\R^2$ is described by one of its normal vectors $n\in S^1$ and a point $q_0\in g$.
The points of the line $g$ are the solutions of the equation 
$\langle q,n\rangle = \langle q_0,n\rangle =: c \in \R$, i.e., $g=\{ q\in \R^2 : \langle q,n\rangle = c\}$.
The geometric distance between $p_j$ and  $g$ is given by $|c-\langle p_j,n\rangle|$ 
(see subsection \ref{subsubsec:Formel-Lot-Punkt-Gerade}).
\subsubsection{Geometric distance between a point and a line}
\label{subsubsec:Formel-Lot-Punkt-Gerade}
For $q_j\in g=\{ q\in \R^2 : \langle q,n\rangle = c\}$ the equation 
$d_j = \min_{q\in g} \| q-p_j\| = \| q_j-p_j\|$ holds if and only if $q_j - p_j \perp g$.
In the plane, the vector $p_j-q_j$ is perpendicular to the line $g$ if and only if  the vector $p_j-q_j$ 
is a multiple of the normal vector $n$ of the line, i.e., $q_j-p_j = \lambda n$. 
In that case, $q_j=p_j+\lambda n$ and $d(p_j,g)=|\lambda|$.
Now,
$$c= \langle q_j,n\rangle =  \langle p_j+\lambda n,n\rangle= \langle p_j,n\rangle +\lambda \|n\|^2
	= \langle p_j,n\rangle +\lambda,$$
since  $q_j=p_j+\lambda n$ and $q_j\in g$.
Thus, $\lambda = c-\langle p_j,n\rangle = \langle q_0-p_j,n\rangle$.
\subsubsection{Minimum for fixed normal vector}
We fix a normal vector $n\in S^1$ and define $f(c):=f(c,n)$. Now
$$ f'(c) = 2\sum_{j=1}^m (c-\langle p_j,n\rangle) 
	= 2mc - 2\left\langle \sum_{j=1}^m p_j,n\right\rangle, \quad  f''(c)  \equiv  2m >0.$$
Thus, the function $f:\R\to\R$, $c\mapsto f(c)$, is strictly convex.
Every local extremum is a global minimum.
The equation $f'(c)=0$ holds if and only if
\begin{equation}
c = \left\langle \frac{1}{m}\sum_{j=1}^m p_j,n\right\rangle = \langle \bar{p},n \rangle
	\text{ where } \bar{p} = \frac{1}{m} \sum_{j=1}^m p_j.
\end{equation}
For any fixed normal vector $n$ the line which contains the center of mass $\bar{p}$ gives the
minimum of $f:\R\to\R$, $c\mapsto f(c)$.
\subsubsection{Elimination of the variable $c$}
In order to find the minimum of the function $f:\R^2\to\R$, $(c,n)\mapsto f(c,n)$, 
it is sufficient to investigate lines that contain the center of mass $\bar{p}$.
We consider $f:S^1\to\R^{\geq 0}$ defined by
$$n\mapsto f(\langle \bar{p},n\rangle,n )  = \sum_{j=1}^m \langle \bar{p}-p_j,n\rangle^2
	 = \sum_{j=1}^m \langle \tilde{p}_j,n\rangle^2
	 = \sum_{j=1}^m (\tilde{p}_j^T n)^2 \text{ where } \tilde{p}_j=\bar{p}-p_j.$$
The change of notation (from  $p_j$ to $\tilde{p}_j= (\tilde{x}_j,\tilde{y}_j)^T$) 
corresponds to the translation of the origin to the center of mass $\bar{p}$.
The function $f(n)$ is even, i.e., $f(n) = f(-n)$ for all $n\in S^1$.
\subsubsection{Optimal normal vector}
\label{subsubsec:geo-l2-Drehung}
We use the parametrization $n(t) = (\cos t, \sin t)^T$ and consider the function $h(t) :=f(n(t))$.
Since $\tilde{p}_j^Tn = n^T \tilde{p}_j$ and 
$$n'(t)= \begin{pmatrix} -\sin t \\ \cos t \end{pmatrix} = Jn(t) \text{ with } 
	J=\begin{pmatrix} 0 & -1 \\ 1 & 0 \end{pmatrix},$$ 
we obtain
$$\frac{d h}{d t}= 2\sum_{j=1}^m \tilde{p}_j^T n  \tilde{p}_j^T \frac{d n}{d t}
	= 2\sum_{j=1}^m n^T \tilde{p}_j   \tilde{p}_j^T J n 
	= 2n^T\left(\sum_{j=1}^m \tilde{p}_j   \tilde{p}_j^T \right)J n = 2 n^T S J n  $$
with
\begin{equation}
S = \sum_{j=1}^m \tilde{p}_j   \tilde{p}_j^T 
= \begin{pmatrix} S_{xx} & S_{xy} \\ S_{xy} & S_{yy} \end{pmatrix},\,
S_{xx} =  \sum_{j=1}^m \tilde{x}_j^2, \, S_{xy} = \sum_{j=1}^m \tilde{x}_j\tilde{y}_j ,\,
	S_{yy} =\sum_{j=1}^m \tilde{y}_j^2 .
\end{equation}
It is easy to check that the matrix $S$ is symmetric:
$$S^T = \left( \sum_{j=1}^m \tilde{p}_j   \tilde{p}_j^T\right)^T 
	= \sum_{j=1}^m \left(\tilde{p}_j   \tilde{p}_j^T\right)^T 
	= \sum_{j=1}^m \left(\tilde{p}_j^T\right)^T   \tilde{p}_j^T 
	= \sum_{j=1}^m \tilde{p}_j   \tilde{p}_j^T = S$$
The fact $Jn\perp n$ yields $h'(t)= 0 \Leftrightarrow n \perp SJn \Leftrightarrow SJn = \lambda Jn$ 
for a $\lambda\in\R$.
This means that $Jn$ is an eigenvector of the matrix $S$. 
The symmetric matrix $S$ is diagonalisible. The eigenspaces of $S$ are perpendicular to each other.
Thus, $n$ is an eigenvector of $S$ if and only if  $Jn$ is an eigenvector of $S$.
\begin{itemize}
\item If the matrix $S$ has two different eigenvalues $\lambda_1\ne \lambda_2$,
	then the associated normalized eigenvectors $\pm n_1$, $\pm n_2$ 
	are the critical points of $f:n\mapsto f(n)$.
\item If the matrix $S$ has a two dimensional eigenspace, then $h'(t) \equiv 0$.
	Thus all lines through the center of mass are optimal.
\end{itemize}
Using the identity $J^2=-\Id$ we derive
\begin{align*}
\frac{d^2 h}{d t^2}= & = \frac{d}{d t}\left( 2n^T S J n \right)
	 = 2 \left(  \left( SJn\right)^T \frac{d n}{d t}+ n^T SJ  \frac{d n}{d t}\right) \\
& = 2\left((Jn)^T S^T Jn + n^T SJJn\right) = 2\left( (Jn)^T S Jn-n^T S n\right).
\end{align*}
Now,
$$h''(\pm n_1)  = 2(\lambda_2-\lambda_1) \text{ and } h''(\pm n_2)  = 2(\lambda_1-\lambda_2).$$
Therefore, the normalized eigenvectors of the smallest and the largest eigenvalue correspond to the
local minima respectively maxima of $f(n)$.
These local extrema are global, since $f(n)$ is symmetric.
\begin{lemma}
Let $E_-$ be the eigenspace of  the smallest eigenvalue of $S$. 
\begin{itemize}
\item $E_-=\R^2$ if and only if  $S_{xy} = 0$ und $S_{xx}=S_{yy}$.
\item $(1,0)^T \in E_-$ if and only if $S_{xy} = 0$ und $S_{xx}\leq S_{yy}$.
\item If $(1,0)^T \not\in E_-$, then $E_- = \R(a,-1)^T$ with
$$a= \frac{2S_{xy} }{S_{xx}-S_{yy}+\sqrt{(S_{xx}-S_{yy})^2+4S_{xy}^2}}.$$
\end{itemize}
\end{lemma}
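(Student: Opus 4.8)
The plan is to reduce the whole statement to the spectral data of the symmetric matrix $S$. First I would record the characteristic polynomial $\lambda^2 - (S_{xx}+S_{yy})\lambda + (S_{xx}S_{yy}-S_{xy}^2)$ and its roots
$$\lambda_\pm = \tfrac12\Bigl(S_{xx}+S_{yy} \pm \sqrt{D}\Bigr), \qquad D := (S_{xx}-S_{yy})^2 + 4S_{xy}^2,$$
so that $\lambda_-$ is the smallest eigenvalue and the nonnegative quantity $D$ measures the gap between the two eigenvalues.

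For the first item, $E_- = \R^2$ holds if and only if $S$ acts as the scalar $\lambda_-$ on all of $\R^2$, i.e. $\lambda_- = \lambda_+$, i.e. $D = 0$; since $D$ is a sum of two squares this is equivalent to $S_{xy} = 0$ and $S_{xx} = S_{yy}$. For the second item I would compute $S\,(1,0)^T = (S_{xx},S_{xy})^T$; this vector is a multiple of $(1,0)^T$ precisely when $S_{xy} = 0$, and in that case $S$ is diagonal with eigenvalues $S_{xx}$ and $S_{yy}$, so $(1,0)^T$ belongs to the eigenspace of the smallest eigenvalue exactly when $S_{xx} \leq S_{yy}$ (the borderline subcase $S_{xx}=S_{yy}$ being the one already treated in the first item).

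For the third item, observe that $(1,0)^T \notin E_-$ forces $\lambda_- \neq \lambda_+$ — otherwise $E_- = \R^2$ would contain $(1,0)^T$ — so $\dim E_- = 1$; moreover $E_-$ cannot be spanned by a vector whose second coordinate is zero, since such a vector would be a multiple of $(1,0)^T$. Hence $E_- = \R\,(a,-1)^T$ for a unique $a\in\R$, and it suffices to verify that the value of $a$ given in the statement satisfies the eigenvector equations $S_{xx}a - S_{xy} = \lambda_- a$ and $S_{xy}a - S_{yy} = -\lambda_-$. I would first rationalize the given quotient — multiplying numerator and denominator by $\sqrt D - (S_{xx}-S_{yy})$ — to bring it into the form $a = (\sqrt D - (S_{xx}-S_{yy}))/(2S_{xy})$ when $S_{xy}\neq 0$; the second equation then gives $\lambda_- = S_{yy} - S_{xy}a = \tfrac12(S_{xx}+S_{yy}-\sqrt D)$, which is indeed the smallest eigenvalue, and substituting $S_{xx}-\lambda_- = \tfrac12(S_{xx}-S_{yy}+\sqrt D)$ into the first equation reduces it to the very definition of $a$ in the stated form.

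The only delicate point — and the main, if modest, obstacle — is to see that the quotient defining $a$ is well defined and that it selects $\lambda_-$ rather than $\lambda_+$. For well-definedness I would note that $\sqrt D \geq |S_{xx}-S_{yy}| \geq -(S_{xx}-S_{yy})$, with equality only when $S_{xy}=0$ and $S_{xx}\leq S_{yy}$, a situation excluded by the hypothesis $(1,0)^T\notin E_-$ via the second item; thus the denominator $S_{xx}-S_{yy}+\sqrt D$ is strictly positive. The remaining diagonal case $S_{xy}=0$, $S_{xx}>S_{yy}$ is handled directly by the formula, which then returns $a=0$ and $E_- = \R\,(0,-1)^T$, consistent with the fact that the smallest eigenvalue $S_{yy}$ of the diagonal matrix $S$ has eigenvector $(0,1)^T$. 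Finally, of the two roots of the quadratic $S_{xy}a^2 + (S_{xx}-S_{yy})a - S_{xy} = 0$ obtained by eliminating $\lambda_-$ from the eigenvector equations, it is exactly this root that corresponds to the smallest eigenvalue, which is precisely what the rationalized form makes transparent.
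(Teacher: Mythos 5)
Your proof is correct and follows essentially the same route as the paper: you compute the eigenvalues from the characteristic polynomial, settle the first two items through the observation that $(1,0)^T$ is an eigenvector exactly when $S$ is diagonal, and obtain the third item by solving the eigenvector equation $(S-\lambda_-\Id)v=0$ for $v=(a,-1)^T$. Your additional care about the nonvanishing of the denominator $S_{xx}-S_{yy}+\sqrt{D}$ and the diagonal subcase $S_{xy}=0$, $S_{xx}>S_{yy}$ only makes explicit what the paper leaves implicit.
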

\begin{proof}
The vector $(1,0)^T$ is an eigenvector of $S$ if and only if $S$ is a diagonal matrix, i.e., $S_{xy}= 0$.
Then $(1,0)^T$ and $(0,1)^T$ are eigenvectors of the eigenvalues $S_{xx}$ respectively $S_{yy}$.

Let us calculate the eigenvalues of $S$:
\begin{align*}
0 & = \det(S-\lambda \Id) = \left( S_{xx}-\lambda\right) \left( S_{yy}- \lambda\right) - S_{xy}^2 \\
& = \lambda^2-\lambda \left(S_{xx}+S_{yy}\right) +  S_{xx}S_{yy} - S_{xy}^2 \\
\lambda & = \frac{1}{2}(S_{xx}+S_{yy}) \pm \frac{1}{2} D  \text{ with } 
	D =\sqrt{(S_{xx}-S_{yy})^2+4S_{xy}^2} 
\end{align*}
If $(1,0)^T\not\in E_-$, then $\dim E_- = 1$ and $S_{xy}\ne 0$ or  $S_{xx}>S_{yy}$. 
The general solution of the linear equation $(S-\lambda \Id) v= 0$ with 
$\lambda = (S_{xx}+S_{yy} - D)/2 $ is $v=t(a,-1)^T$ with $t\in \R$ and $a = 2S_{xy}/(S_{xx}-S_{yy} +D)$.
\end{proof}
\begin{theorem}
\label{satz:Formel-geo-L2-Optimum}
Let $p_j\in\R^2$, $j=1,\ldots,m$, be pairwise distinct points. \\
A line $g\subset \R^2$ has minimal geometric $L^2$-distance to the set $\{ p_1,\ldots,p_m\}$
if and only if 
$\bar{p}\in g$ and the eigenspace of the smallest eigenvalue of $S$ contains a normal vector of $g$.
\begin{itemize}
\item If $S_{xy} = 0$ and $S_{xx} = S_{yy}$, then any line through $\bar{p}$ is optimal.
\item If $S_{xy} = 0$ and $S_{xx} < S_{yy}$, then there exists a unique optimal line: $x=\bar{x}$
\item If $S_{xy} \ne 0$ or $S_{xx} > S_{yy}$, then there exists a unique optimal line:
	\begin{equation}
	y = \frac{2S_{xy}}{S_{xx}-S_{yy}+D}(x-\bar{x}) +\bar{y} 
	\end{equation}
\end{itemize}
\end{theorem}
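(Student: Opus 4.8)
The plan is to assemble the results of the three preceding subsubsections together with the Lemma, so very little genuinely new work is required. First I would settle existence: the function $f:\R\times S^1\to\R$ is continuous, and for each fixed $n$ the map $c\mapsto f(c,n)$ is a strictly convex quadratic whose minimum is attained at $c=\langle\bar p,n\rangle$ with value $f(\langle\bar p,n\rangle,n)=\sum_{j=1}^m\langle\tilde p_j,n\rangle^2=n^TSn$. Hence minimising $f$ over $\R\times S^1$ is equivalent to minimising the continuous function $n\mapsto n^TSn$ over the compact set $S^1$, and a global minimiser exists. Any optimal pair $(c,n)$ must have $c=\langle\bar p,n\rangle$, which is precisely the condition $\bar p\in g$; so every optimal line passes through the centre of mass.

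Next I would invoke subsubsection \ref{subsubsec:geo-l2-Drehung}. With $n(t)=(\cos t,\sin t)^T$ and $h(t)=f(n(t))$, the critical points of $h$ are exactly the unit vectors $n$ for which $n$ (equivalently $Jn$) is an eigenvector of $S$, and the computed values $h''(\pm n_1)=2(\lambda_2-\lambda_1)$, $h''(\pm n_2)=2(\lambda_1-\lambda_2)$ identify the normalized eigenvectors of the \emph{smallest} eigenvalue of $S$ as the minimisers of $h$ on $S^1$ (and, when $S$ has a two-dimensional eigenspace, $h$ is constant, so every direction minimises). Combining this with the previous paragraph gives the asserted characterisation: $g$ is optimal if and only if $\bar p\in g$ and some normal vector $n$ of $g$ lies in $E_-$, the eigenspace of the smallest eigenvalue of $S$.

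The three bullet points then follow by feeding the Lemma into this characterisation. If $S_{xy}=0$ and $S_{xx}=S_{yy}$, the Lemma gives $E_-=\R^2$, so every line through $\bar p$ is optimal. If $S_{xy}=0$ and $S_{xx}<S_{yy}$, then $E_-=\R(1,0)^T$ is one-dimensional, the only optimal normal direction is $(1,0)^T$, and the line through $\bar p=(\bar x,\bar y)^T$ with this normal is $x=\bar x$; uniqueness comes from $\dim E_-=1$. Otherwise $(1,0)^T\notin E_-$ and, by the Lemma, $E_-=\R(a,-1)^T$ with $a=2S_{xy}/(S_{xx}-S_{yy}+D)$; the line through $\bar p$ with normal $(a,-1)^T$ is $\langle q-\bar p,(a,-1)^T\rangle=0$, i.e.\ $a(x-\bar x)-(y-\bar y)=0$, which rearranges to the displayed equation, and again $\dim E_-=1$ yields uniqueness.

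I do not expect a serious obstacle; the analytic content (elimination of $c$, the rotation derivative, the eigenvalue formula) is already in hand. The two things needing care are: invoking compactness of $S^1$ to upgrade "the critical points are the eigenvectors" to "a global minimiser exists and is an eigenvector of the smallest eigenvalue", and the bookkeeping that turns the single condition "$E_-$ contains a normal vector of $g$" into three concrete families — in particular recognising that a one-dimensional $E_-$ spanned by $(1,0)^T$ forces the vertical line $x=\bar x$, which the slope–intercept form cannot represent, and that $\dim E_-=1$ is exactly what makes the optimal line unique.
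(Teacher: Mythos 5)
Your proposal is correct and follows essentially the same route as the paper: the paper's stated proof is only the final remark translating the normal vector $(a,-1)^T$ into slope--intercept form, with all the substance (minimisation over $c$ giving $\bar p\in g$, the rotation argument identifying eigenvectors of $S$, the second-derivative test, and the Lemma on $E_-$) contained in the preceding subsubsections, which is exactly what you assemble. Your write-up simply makes that assembly, including the existence-via-compactness step and the case bookkeeping, explicit.
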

\begin{proof}
An optimal line with normal vector $(a,-1)^T$ can be parametrized by $g=\{Ê\bar{p}+t(1,a)^T:t\in\R\}$.
The slope of this line is $a$.
\end{proof}
\begin{cor}
If the point set $\{ p_1,\ldots, p_m\}$ is invariant under reflection in a line $g\ni\bar{p}$,
then this line $g$ or the line perpendicular to $g$ containing $\bar{p}$ is a line with
minimal geometric $L^2$-distance to the set $\{ p_1,\ldots, p_m\}$.
\end{cor}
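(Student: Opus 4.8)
The plan is to deduce this from the structural description in Theorem~\ref{satz:Formel-geo-L2-Optimum} together with the equivariance of the set of geometrically $L^2$-optimal lines under isometries recorded in the section on symmetries, much as Corollary~\ref{folg:algL2Spiegelsymmetrie} was obtained in the algebraic case; the new feature is that the geometric optimum need not be unique, which is exactly why the conclusion allows two candidate lines. Write $\sigma\colon\R^2\to\R^2$ for the reflection in the line $g$. Since $\bar p\in g$ by hypothesis, $\sigma$ fixes $\bar p$ (equivalently, the $\sigma$-invariance of $\{p_1,\ldots,p_m\}$ forces $\sigma(\bar p)=\bar p$). By Theorem~\ref{satz:Formel-geo-L2-Optimum} the set of optimal lines is nonempty and each of its members passes through $\bar p$, and since $\sigma$ is an isometry it maps optimal lines to optimal lines.

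First I would record an elementary planar fact: a line $\ell$ through the point $\bar p\in g$ that is invariant under $\sigma$ is either $g$ itself or the line through $\bar p$ perpendicular to $g$. Indeed, if $\ell$ meets $g$ at $\bar p$ at angle $\theta$, then $\sigma(\ell)$ meets $g$ at $\bar p$ at angle $-\theta$, so $\sigma(\ell)=\ell$ forces $2\theta\equiv0\pmod\pi$, i.e.\ $\theta\equiv0$ or $\theta\equiv\pi/2\pmod\pi$.

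Then I would pick any optimal line $\ell$ (one exists by Theorem~\ref{satz:Formel-geo-L2-Optimum}) and distinguish two cases. If $\sigma(\ell)=\ell$, the fact above shows that $\ell$ is $g$ or the perpendicular to $g$ through $\bar p$, so one of the two lines named in the statement is optimal. If $\sigma(\ell)\neq\ell$, then $\ell$ and $\sigma(\ell)$ are two distinct optimal lines through $\bar p$; their normal vectors are linearly independent, so by Theorem~\ref{satz:Formel-geo-L2-Optimum} the eigenspace of the smallest eigenvalue of $S$ is all of $\R^2$, i.e.\ $S_{xy}=0$ and $S_{xx}=S_{yy}$, whence every line through $\bar p$ is optimal---in particular $g$.

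I expect the only point needing care to be this degenerate second case, where the existence of two non-parallel optimal lines forces $S$ to be a scalar matrix and one must appeal to the first bullet of Theorem~\ref{satz:Formel-geo-L2-Optimum}; everything else is immediate. A computational alternative avoiding the case split: translate $\bar p$ to the origin, observe that $\sigma$ then permutes the vectors $\tilde p_j$, so $\sigma S\sigma=\sum_{j=1}^m(\sigma\tilde p_j)(\sigma\tilde p_j)^T=S$ and hence $S$ commutes with $\sigma$; the two eigenspaces of $\sigma$---the direction of $g$ and the direction normal to $g$---are therefore spanned by eigenvectors of $S$, and whichever of these two directions belongs to the smaller eigenvalue of $S$ is a normal vector of an optimal line, which by the correspondence between normal vectors and lines through $\bar p$ is either $g$ or its perpendicular at $\bar p$.
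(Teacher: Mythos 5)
Your argument is correct and is essentially the paper's own proof: the paper likewise observes that an optimal line through $\bar p$ which is neither $g$ nor its perpendicular cannot be reflection-invariant, so there are at least two optimal lines, and then the dichotomy of Theorem~\ref{satz:Formel-geo-L2-Optimum} forces every line through $\bar p$ (in particular $g$) to be optimal. Your closing alternative---that $S$ commutes with the reflection, so the direction of $g$ and its normal are eigenvectors of $S$---is a valid and slightly more direct variant, but the main case-split argument coincides with the paper's.
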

\begin{proof}
Let $l\ne g$ be an optimal line that contains $\bar{p}$ and is not perpendicular to $g$. 
Since $l$ is not invariant under the reflection in $g$, there exist at least two optimal lines.
Thus, all lines containing $\bar{p}$ are optimal.
\end{proof}
\begin{cor}
If the point set $\{ p_1,\ldots, p_m\}$ is invariant under a rotation 
around $\bar{p}$ through an angle $\phi \not\in \Z\pi$, then $S_{xy}=0$ and $S_{xx}=S_{yy}$.
\end{cor}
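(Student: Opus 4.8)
The plan is to exploit the equivariance of the matrix $S$ (equivalently, of the objective function $f(n)$) under the rotation that leaves the point set invariant, and then to combine this with the previous corollary. Let $R_\phi$ denote the rotation about $\bar p$ through the angle $\phi$. First I would note that $R_\phi$ fixes the center of mass, $R_\phi(\bar p)=\bar p$, so in the centered coordinates $\tilde p_j$ the rotation acts linearly by the orthogonal matrix $Q=\bigl(\begin{smallmatrix}\cos\phi&-\sin\phi\\\sin\phi&\cos\phi\end{smallmatrix}\bigr)$. Since $\{p_1,\dots,p_m\}$ is $R_\phi$-invariant, the multiset $\{\tilde p_1,\dots,\tilde p_m\}$ equals $\{Q\tilde p_1,\dots,Q\tilde p_m\}$, and therefore
\[
S=\sum_{j=1}^m \tilde p_j\tilde p_j^T=\sum_{j=1}^m (Q\tilde p_j)(Q\tilde p_j)^T=Q\Bigl(\sum_{j=1}^m \tilde p_j\tilde p_j^T\Bigr)Q^T=QSQ^T,
\]
i.e.\ $SQ=QS$, so $S$ commutes with the rotation matrix $Q$.

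Next I would argue that a symmetric $2\times2$ matrix commuting with a rotation through $\phi\notin\Z\pi$ must be a scalar multiple of the identity. One clean way: if $S$ had two distinct eigenvalues, its eigenspaces would be two specific orthogonal lines, and $Q$ would have to permute them; but for $\phi\notin\Z\pi$, $Q$ fixes no line in $\R^2$ (it has no real eigenvector), so it cannot preserve the eigenspace decomposition — contradiction. Hence $S=\lambda\,\Id$ for some $\lambda$, which is exactly $S_{xy}=0$ and $S_{xx}=S_{yy}$. Alternatively, and perhaps more in keeping with the elementary spirit of the paper, I could derive the same conclusion directly from the already-proved corollary on reflection symmetry: a rotation through $\phi\notin\Z\pi$ forces the set to be invariant under rotations through $k\phi$ for all $k$, so one obtains at least two distinct axes of the ``optimal line or its perpendicular'' dichotomy through $\bar p$, hence at least two optimal lines through $\bar p$, hence (by Theorem~\ref{satz:Formel-geo-L2-Optimum}) all lines through $\bar p$ are optimal, which is the first case, giving $S_{xy}=0$ and $S_{xx}=S_{yy}$.

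I would write up the first (matrix) version, since it is short and self-contained. The only mildly delicate point — the main obstacle, such as it is — is justifying that a nonscalar symmetric $2\times2$ matrix cannot commute with $Q$ for $\phi\notin\Z\pi$; I would handle it by writing $S-\tfrac12(S_{xx}+S_{yy})\Id=\bigl(\begin{smallmatrix}\alpha&\beta\\\beta&-\alpha\end{smallmatrix}\bigr)$ with $(\alpha,\beta)=\bigl(\tfrac12(S_{xx}-S_{yy}),S_{xy}\bigr)$, computing that conjugation by $Q$ rotates the vector $(\alpha,\beta)$ by the angle $2\phi$, so $SQ=QS$ forces $(\alpha,\beta)$ to be fixed by rotation through $2\phi\notin2\Z\pi$, hence $(\alpha,\beta)=(0,0)$. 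That is a two-line computation and finishes the proof.

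\begin{proof}
The rotation $R_\phi$ about $\bar p$ fixes $\bar p$, so in the centered coordinates $\tilde p_j=\bar p-p_j$ it acts as multiplication by the orthogonal matrix $Q=\left(\begin{smallmatrix}\cos\phi&-\sin\phi\\\sin\phi&\cos\phi\end{smallmatrix}\right)$. Invariance of the point set means $\{\tilde p_1,\dots,\tilde p_m\}=\{Q\tilde p_1,\dots,Q\tilde p_m\}$ as multisets, hence
\[
S=\sum_{j=1}^m \tilde p_j\tilde p_j^T=\sum_{j=1}^m (Q\tilde p_j)(Q\tilde p_j)^T=Q S Q^T,
\]
so $SQ=QS$. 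Write $S=\tfrac12(S_{xx}+S_{yy})\Id+\left(\begin{smallmatrix}\alpha&\beta\\\beta&-\alpha\end{smallmatrix}\right)$ with $\alpha=\tfrac12(S_{xx}-S_{yy})$ and $\beta=S_{xy}$. A direct computation shows that $Q\left(\begin{smallmatrix}\alpha&\beta\\\beta&-\alpha\end{smallmatrix}\right)Q^T=\left(\begin{smallmatrix}\alpha'&\beta'\\\beta'&-\alpha'\end{smallmatrix}\right)$, where $(\alpha',\beta')^T$ is the image of $(\alpha,\beta)^T$ under rotation through the angle $2\phi$. Since $SQ=QS$, we get $(\alpha,\beta)=(\alpha',\beta')$, so $(\alpha,\beta)^T$ is fixed by rotation through $2\phi$. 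Because $\phi\notin\Z\pi$, we have $2\phi\notin 2\Z\pi$, and the only fixed vector of such a rotation is $0$. Hence $\alpha=\beta=0$, that is $S_{xy}=0$ and $S_{xx}=S_{yy}$.
\end{proof}
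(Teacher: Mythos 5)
Your proof is correct, and the $2\phi$-rotation computation for the traceless part of $S$ checks out: writing $S=\tfrac12(S_{xx}+S_{yy})\Id+\bigl(\begin{smallmatrix}\alpha&\beta\\ \beta&-\alpha\end{smallmatrix}\bigr)$, conjugation by $Q$ does rotate $(\alpha,\beta)^T$ through $2\phi$, and $2\phi\notin 2\pi\Z$ forces $(\alpha,\beta)=(0,0)$. However, your route is genuinely different from the paper's. The paper does not touch the matrix $S$ at all here: it invokes the equivariance of the set of optimal lines under symmetries of the point set together with Theorem \ref{satz:Formel-geo-L2-Optimum}, which says the optimal line is unique unless $S_{xy}=0$ and $S_{xx}=S_{yy}$ (in which case all lines through $\bar p$ are optimal). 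Since no line is invariant under a rotation through $\phi\notin\Z\pi$, a unique optimal line is impossible, so all lines through $\bar p$ must be optimal, and the conclusion about $S$ is read off from the theorem. That argument is two lines long but depends on the classification already proved; yours is a self-contained linear-algebra fact about the scatter matrix (that a symmetric matrix commuting with a non-trivial rotation is scalar), which is slightly longer but independent of the optimization machinery and, incidentally, proves the statement even in degenerate cases where one might worry about the hypotheses of the theorem. Your second, sketched alternative via the reflection corollary is closest in spirit to the paper, but the paper's version is cleaner: it argues directly from non-invariance of any line rather than through two reflection axes.
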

\begin{proof}
Since $\phi\ne k\pi$ for all $k\in\Z$, no line is invariant under the rotation. 
Thus, all lines containing $\bar{p}$ are optimal.
\end{proof}
\begin{rem}
Theorem \ref{satz:Formel-geo-L2-Optimum} and its corollaries remain true if the points $p_1,\ldots,p_m$ 
are not pairwise distinct.
\end{rem}
\subsubsection{Generalizations}
A natural generalization of the subject in this section is an affine subspace of fixed dimension $k<n$ 
with minimal geometric $L^2$-distance to a given finite set in $\R^n$.
The corresponding optimal lines and planes in $\R^3$ are discussed in \cite{P}.
For $k=n-1$ this problem leads to overdetermined systems of linear equations that can be treated with total least squares (TLS) techniques.
In this section the essential information about the point set is stored in the matrix $S$.
The decomposition of a similar matrix $S$ reappears in a method of multivariate statistics called
principal component analysis (PCA).
\subsection{Example an comparison algebraic versus geometric}
Any $L^2$-optimal line contains the center of mass of the set $\{ p_1,\ldots,p_m\}$. 
The algebraic problem always possesses a unique solution.
The geometric problem admits a unique solution that is a graph of a linear function if and only if
$S_{xy}\ne 0$ or $S_{xx}>S_{yy}$.
\begin{theorem}
The algebraic and the geometric $L^2$-optimal lines coincide if and only if
this line contains all points $p_j$ or  $S_{xy}=0$ and $S_{xx}\geq S_{yy}$.
\end{theorem}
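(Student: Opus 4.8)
The plan is to compare the two explicit descriptions already at hand. By Theorem~\ref{satz:eindeutigeRegressionsgerade} the algebraic optimum is the \emph{unique} line
$\ell_a\colon y=\frac{S_{xy}}{S_{xx}}(x-\bar x)+\bar y$ (well defined since the standing assumption $x_j\ne x_k$ gives $S_{xx}>0$), while the geometric optima are listed in the three cases of Theorem~\ref{satz:Formel-geo-L2-Optimum}; I read ``the two optimal lines coincide'' as ``$\ell_a$ is also a geometric $L^2$-optimal line''. Since $\ell_a$ and every geometric optimum pass through $\bar p$, in each case where the geometric optimum is unique the question reduces to whether the two slopes agree. First I would record two facts about $S=\sum_{j=1}^m\tilde p_j\tilde p_j^{T}$: it is symmetric and positive semidefinite, and $\langle Sv,v\rangle=\sum_j\langle\tilde p_j,v\rangle^2$ gives $\operatorname{rank}S=\dim\operatorname{span}\{\tilde p_j\}$, so $S_{xx}>0$ forces $\operatorname{rank}S\in\{1,2\}$, and $\det S=0$ holds exactly when all $p_j$ lie on one line; that line has algebraic distance $0$, hence by uniqueness equals $\ell_a$. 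Thus the clause ``this line contains all $p_j$'' is the same as $\det S=0$.

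For the direction ``$\Leftarrow$'' I would split on the two clauses. If $\det S=0$, then $\det S=S_{xx}S_{yy}-S_{xy}^{2}=0$ gives $D=\sqrt{(S_{xx}-S_{yy})^2+4S_{xy}^2}=\sqrt{(S_{xx}+S_{yy})^2}=S_{xx}+S_{yy}$; when $S_{xy}=0$ this forces $S_{yy}=0<S_{xx}$ and the third bullet of Theorem~\ref{satz:Formel-geo-L2-Optimum} produces the line $y=\bar y=\ell_a$, while for $S_{xy}\ne0$ it produces slope $\frac{2S_{xy}}{S_{xx}-S_{yy}+D}=\frac{2S_{xy}}{2S_{xx}}=\frac{S_{xy}}{S_{xx}}$, again $\ell_a$. (Equivalently: $0$ is then the smallest eigenvalue of $S$, and its eigenspace is the normal line of $\ell_a$, so Theorem~\ref{satz:Formel-geo-L2-Optimum} directly gives $\ell_a$.) If instead $S_{xy}=0$ and $S_{xx}\ge S_{yy}$: for $S_{xx}>S_{yy}$ the unique geometric optimum has slope $0$ through $\bar p$, i.e.\ it is $\ell_a$; for $S_{xx}=S_{yy}$ every line through $\bar p$ is geometrically optimal, hence $\ell_a$ is. This settles ``$\Leftarrow$''.

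For ``$\Rightarrow$'' I would assume $\ell_a$ is geometrically optimal and $\det S>0$ (so $S$ is positive definite), and show $S_{xy}=0$ and $S_{xx}\ge S_{yy}$. If $S_{xy}\ne0$, we are in the third case of Theorem~\ref{satz:Formel-geo-L2-Optimum}, the geometric optimum is unique with slope $\frac{2S_{xy}}{S_{xx}-S_{yy}+D}$, and equating it with $\frac{S_{xy}}{S_{xx}}$ (both lines pass through $\bar p$) and cancelling $S_{xy}$ gives $S_{xx}-S_{yy}+D=2S_{xx}$, i.e.\ $D=S_{xx}+S_{yy}>0$; squaring yields $4S_{xy}^2=4S_{xx}S_{yy}$, i.e.\ $\det S=0$, a contradiction. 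Hence $S_{xy}=0$. If moreover $S_{xx}<S_{yy}$, Theorem~\ref{satz:Formel-geo-L2-Optimum} gives the unique geometric optimum $x=\bar x$, which is a different (indeed perpendicular) line from $y=\bar y=\ell_a$, again a contradiction; so $S_{xx}\ge S_{yy}$, as required.

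The only computation with any real content is the equivalence ``geometric slope $=$ algebraic slope $\iff\det S=0$'' in the case $S_{xy}\ne0$; everything else is bookkeeping over the three geometric cases crossed with whether $S_{xy}$ vanishes and whether $\det S=0$. The main thing to watch will be that the two clauses of the statement are mutually consistent and jointly exhaustive: collinearity together with $S_{xy}=0$ forces $S_{yy}=0\le S_{xx}$, and $S=\lambda\Id$ --- the one situation in which the geometric optimum is not unique --- cannot occur for collinear points, since there $\operatorname{rank}S=1$. I do not expect any obstacle beyond keeping this casework organised.
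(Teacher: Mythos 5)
Your proposal is correct and follows essentially the same route as the paper: equate the algebraic slope $S_{xy}/S_{xx}$ with the geometric slope $2S_{xy}/(S_{xx}-S_{yy}+D)$, reduce that equality to $S_{xx}S_{yy}=S_{xy}^2$, and identify this condition with collinearity (the paper via the equality case of the Schwarz inequality, you via $\operatorname{rank}S=1$ and $\det S=0$). Your explicit bookkeeping of the remaining cases ($S_{xy}=0$ with $S_{xx}\le S_{yy}$, and $S=\lambda\Id$) fills in details the paper's terse proof leaves implicit, but it is the same argument.
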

\begin{proof}
If $S_{xy}=0$ and $S_{xx} > S_{yy}$, then the algebraic and the geometric $L^2$-optimal lines coincide 
if and only if
$$ \frac{S_{xy}}{S_{xx}} =  \frac{2S_{xy}}{S_{xx}-S_{yy}+D} .$$
This equation holds if and only if $S_{xy} = 0$ or 
\begin{align*}
S_{xx}-S_{yy}+D & = 2S_{xx} \\
D & = S_{xx}+S_{yy} \\
(S_{xx}-S_{yy})^2+4S_{xy} & = (S_{xx}+S_{yy})^2 \\
S_{xx}S_{yy} & = S_{xy}.
\end{align*}
It follows from the Schwarz inequality that
$$S_{xx}S_{yy} = \sum_{j=1}^m \tilde{x}_j^2 \sum_{j=1}^m \tilde{y}_j^2 \geq 
(\sum_{j=1}^m \tilde{x}_j\tilde{y}_j)^2 = S_{xy}^2,$$
where equality holds if and only if $\lambda \tilde{x}_j = \tilde{y}_j$ for a $\lambda\in\R$ and all $j$.
\end{proof}
We consider the four points
$p_1=(0,0)^T$, $p_2=(1,1)^T$, $p_3=(2,2)^T$ and $p_4=(3,3/2)^T$ (see Figure \ref{l2m4}).
Their center of mass is $\bar{p} = (3/2, 9/8)^T$. Now, $\tilde{p}_1=(-3/2,-9/8)^T$, 
$\tilde{p}_2=(-1/2,-1/8)^T$, $\tilde{p}_3=(1/2,7/8)^T$, $\tilde{p}_4 = (3/2,3/8)^T$, 
$S_{xx} = 5$, $S_{yy} = 35/16$ und $S_{xy} = 11/4$.
The algebraic $L^2$-optimal linear function is
$$y(x) = a\left(x-\frac{3}{2}\right) +\frac{9}{8} \text{ with } a= \frac{11}{20} = 0.55.$$
The geometric $L^2$-optimal line is the graph of the linear function
$$y(x) = a\left(x-\frac{3}{2}\right) +\frac{9}{8} \text{ with } a=\frac{88}{45+\sqrt{45^2+88^2}}\approx 0.61.$$
The algebraic and the geometric $L^2$-optimal lines intersect at $\bar{p}$.
\begin{figure}
\begin{center}
  \mbox{\scalebox{.3}{\includegraphics{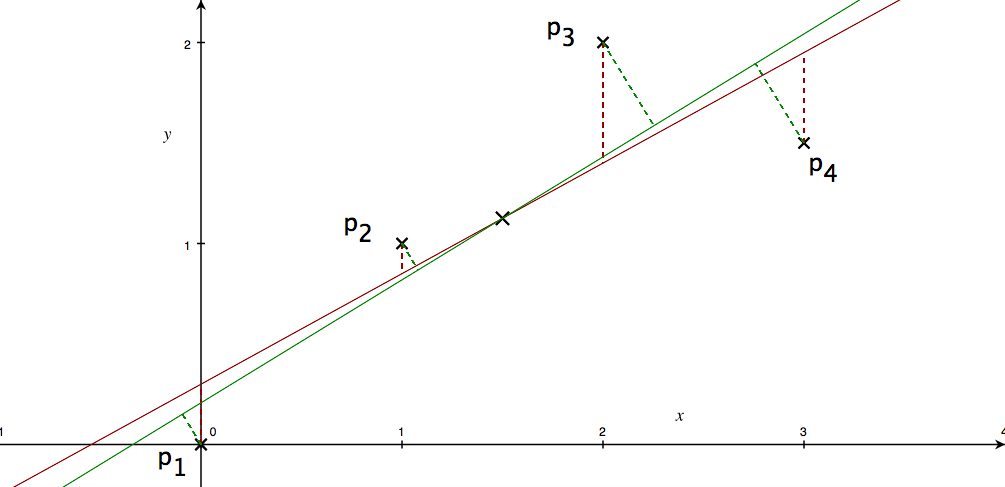}}}
\caption{vertical and orthogonal $L^2$-optimal lines}
\label{l2m4}
\end{center}
\end{figure}
\section{Absolute distance - $L^1$-Norm}
\label{sec:TotalerAbstand}
In this section we minimize the sum of the distances of the single points $p_j$.
We consider the function $f(g)=\sum_{j=1}^m d_j$.
This function is differentiable at $g$ if and only if 
$d_j>0$ for all $j$, i.e., $g$ contains none of the points $p_j$.
We prove that there exists a global minimum of $f$ for the algebraic and the geometric distance.
The set of all $L^1$-optimal lines is described. 
Explicit formulas of the elements of $M$ in terms of  $p_1,\ldots,p_m$ are not available,
yet the set $M$ can be completely characterized by comparing the values of $f$ at lines containing
at least two of the points $p_j$.
Minimizing the $L^1$-norm can be carried out in $\leq O(m^2)$ steps.
\subsection{Minimal algebraic $L^1$-distance}
\label{sec:Totaler-algebraischer-Abstand}
Given points $p_j=(x_j,y_j)^T\in \R^2$, $j=1,\ldots,m$, 
we determine all linear functions $y(x)=ax+b$, $a,b\in\R$,
with minimal algebraic $L^1$-distance to the point set $\{Êp_1,\ldots,p_m\}$, 
i.e., the minimum of the function $f:\R^2\to\R$ defined by
$$f(a,b)=\sum_{j=1}^m | y_j-(ax_j+b)|.$$
As in subsection \ref{sec:LineareRegression} we additionally assume that 
$x_j\ne x_k$ for all $j\ne k$.

The function $f$ is continuous, piecewise linear, convex and bounded from below.
Thus, $f$ admits a global minimum. The set of algebraically $L^1$-optimal lines $M$ is convex.
Since $f$ is not strictly convex, the set $M$ could be unbounded.
We show that $M$ is the convex hull of finitely many points.
\subsubsection{Decomposition of the index set}
\label{subsubsec:ZerlegungIndexmenge}
For every linear function $y(x)=ax+b$ we define
$J_+:=\{Êj:y_j > y(x_j)\}$, $J_0:=\{Êj:y_j = y(x_j)\}$ und $J_-:=\{Êj:y_j < y(x_j)\}$.
This decomposition depends on the parameters $a$ and $b$ (see Figure \ref{fig:ZerlegAlg}).
The sets $J_+$, $J_0$ and $J_-$ are pairwise disjoint,
$J_+ \cup J_0 \cup J_- = \{Ê1,\ldots,m\}$ and
\begin{align*}
f(a,b) & = \sum_{j\in J_+} (y_j-ax_j-b) -\sum_{j\in J_-} (y_j-ax_j-b) \\
& = b(|J_-| - |J_+|) + \left(\sum_{j\in J_+} y_j -\sum_{j\in J_-} y_j\right) -
	a\left( \sum_{j\in J_+} x_j -\sum_{j\in J_-} x_j\right).
\end{align*}
\begin{figure}
\noindent
\begin{minipage}[t]{.48\linewidth}
  \mbox{\scalebox{.26}{\includegraphics{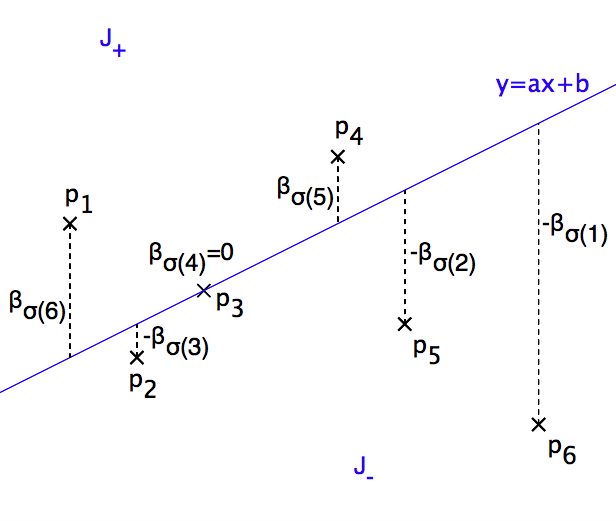}}}
\caption{algebraic decomposition of the index set, definition of $\beta_{\sigma(j)}$}
\label{fig:ZerlegAlg}
\end{minipage}
\hspace*{.02\linewidth}
\begin{minipage}[t]{.48\linewidth}
  \mbox{\scalebox{.26}{\includegraphics{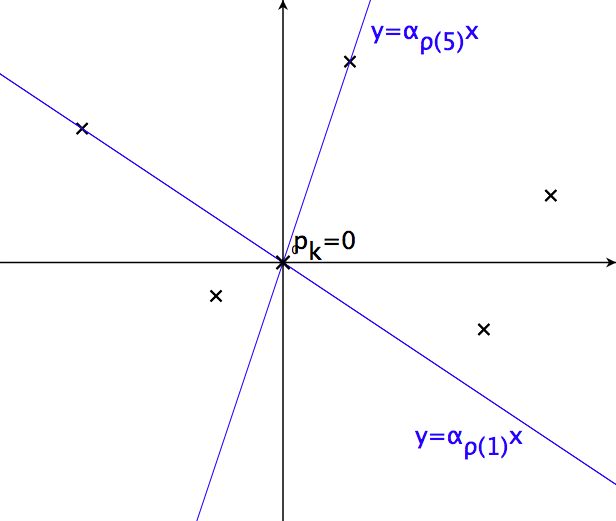}}}
\caption{definition of $\alpha_{\sigma(j)}$ for lines through $p_k$}
\label{fig:Drehalg}
\end{minipage}
\end{figure}
\subsubsection{Translation}
\label{subsubsec:Parallel-l1-alg}
For any $a\in\R$ we consider the values of $f$ at lines with fixed slope $a$ and variable $b$.
We want to determine the optimal line with slope $a$, i.e., $\min\{ f(a,b):b\in\R\}$.

Note that $f(a,0)= \sum_{j=1}^m |y_j-ax_j|$. Set $\beta_j:=y_j-ax_j$.
Let $\sigma$ be a permutation satisfying $\beta_{\sigma(1)}\leq \ldots \leq \beta_{\sigma(m)}$
(see Figure \ref{fig:ZerlegAlg}).
\begin{lemma}
For every $a\in\R$ the function $f(a,\cdot):\R\to\R$, $b\mapsto f(a,b)$ admits a global minimum.
If $f(a,b_0) = \min\{ f(a,b):b\in\R\}$, then $\beta_{\sigma(1)} \leq b_0 \leq  \beta_{\sigma(m)} $.
\end{lemma}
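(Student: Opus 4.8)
The plan is to analyze the function $b\mapsto f(a,b)$ for fixed slope $a$ using the piecewise-linear formula derived above. First I would recall that on each region where the decomposition $J_+,J_0,J_-$ is constant, $f(a,b)$ is affine in $b$ with slope $|J_-|-|J_+|$; since $J_0=\emptyset$ on the interior of each such region and the regions are the open intervals between consecutive values $\beta_{\sigma(j)}$, the function $b\mapsto f(a,b)$ is continuous and piecewise linear in $b$ with breakpoints exactly at the $\beta_{\sigma(j)}$. Moreover, as $b$ decreases through a value $\beta_{\sigma(j)}$, an index moves from $J_-$ (or $J_0$) into $J_+$, so the slope $|J_-|-|J_+|$ is nondecreasing in $b$; this is just convexity of $f(a,\cdot)$ restated, but phrased concretely it lets me track the sign of the slope.

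Next I would examine the behavior on the two unbounded rays. For $b<\beta_{\sigma(1)}$ every point lies strictly above the line, so $J_-=\emptyset$, $J_+=\{1,\dots,m\}$, and the slope is $|J_-|-|J_+|=-m<0$; hence $f(a,\cdot)$ is strictly decreasing on $(-\infty,\beta_{\sigma(1)})$. Symmetrically, for $b>\beta_{\sigma(m)}$ we get $J_+=\emptyset$, $J_-=\{1,\dots,m\}$, slope $+m>0$, so $f(a,\cdot)$ is strictly increasing on $(\beta_{\sigma(m)},\infty)$. Therefore $f(a,b)\to+\infty$ as $b\to\pm\infty$; combined with continuity this already gives the existence of a global minimum of $b\mapsto f(a,b)$ on $\R$ (alternatively one may just invoke that $f$ is continuous, convex and bounded below, as stated in the text). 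The same sign computation shows that outside the closed interval $[\beta_{\sigma(1)},\beta_{\sigma(m)}]$ the function is strictly monotone away from that interval, so no minimizer can lie outside it; hence any $b_0$ with $f(a,b_0)=\min_b f(a,b)$ satisfies $\beta_{\sigma(1)}\le b_0\le\beta_{\sigma(m)}$.

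Concretely, the two-paragraph argument I would write is: (i) On $(-\infty,\beta_{\sigma(1)}]$ one has $f(a,b)=b(|J_-|-|J_+|)+C_a= -mb + C_a$ with the constant $C_a=\sum_j y_j - a\sum_j x_j$, which is strictly decreasing, so $f(a,b)>f(a,\beta_{\sigma(1)})$ for all $b<\beta_{\sigma(1)}$. (ii) Likewise on $[\beta_{\sigma(m)},\infty)$, $f(a,b)=mb - C_a$ is strictly increasing, so $f(a,b)>f(a,\beta_{\sigma(m)})$ for all $b>\beta_{\sigma(m)}$. (iii) The restriction of the continuous function $f(a,\cdot)$ to the compact interval $[\beta_{\sigma(1)},\beta_{\sigma(m)}]$ attains a minimum at some $b_0$; by (i) and (ii) this $b_0$ is a global minimum of $f(a,\cdot)$ on all of $\R$, and every global minimizer lies in $[\beta_{\sigma(1)},\beta_{\sigma(m)}]$.

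I do not expect a genuine obstacle here; the only point requiring a little care is bookkeeping the slope $|J_-|-|J_+|$ correctly on the two unbounded pieces and making sure the endpoints $\beta_{\sigma(1)},\beta_{\sigma(m)}$ are included (so that $J_0$ may be nonempty there), which is why the claimed bounds are the non-strict inequalities $\beta_{\sigma(1)}\le b_0\le\beta_{\sigma(m)}$ rather than strict ones. Convexity guarantees that the set of minimizers is a subinterval of $[\beta_{\sigma(1)},\beta_{\sigma(m)}]$, but for this lemma only existence and the two-sided bound are needed.
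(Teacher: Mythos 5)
Your proof is correct and follows essentially the same route as the paper: identify the explicit affine expressions $f(a,b)=-mb+C_a$ for $b<\beta_{\sigma(1)}$ and $f(a,b)=mb-C_a$ for $b>\beta_{\sigma(m)}$, conclude strict monotonicity on the two rays, and then use continuity on the compact interval $[\beta_{\sigma(1)},\beta_{\sigma(m)}]$ to obtain the global minimum and the bound on $b_0$. The extra remarks on piecewise linearity and slope monotonicity are fine but not needed beyond what the paper's proof already uses.
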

\begin{proof}
If $b>\beta_{\sigma(m)}$, then $J_+=J_0=\emptyset$.
If $b<\beta_{\sigma(1)}$, then $J_-=J_0=\emptyset$.
Consequently,
$$f(a,b) = mb-\sum_{j=1}^m y_j+a\sum_{j=1}^m x_j \text{ for } b>\beta_{\sigma(m)}$$
and
$$f(a,b) = -mb+\sum_{j=1}^m y_j-a\sum_{j=1}^m x_j \text{ for }b<\beta_{\sigma(1)}.$$
The function $f(b)$ is strictly decreasing for $b<\beta_{\sigma(1)}$ 
and strictly increasing for $b>\beta_{\sigma(m)}$. 
Continuity of $f$ yields
$$\inf_{b\in \R} f(a,b) = \inf_{\beta_\sigma(1) \leq b \leq \beta_{\sigma(m)}}  f(a,b)
= \min_{\beta_{\sigma(1)}\leq b \leq \beta_{\sigma(m)}} f(a,b) .$$
\end{proof}
\begin{lemma}
For every $a\in\R$ the function $f(a,\cdot)$ admits its global minimum at $b$ if and only if
$|J_-|+|J_0|- |J_+|\geq 0$ und $|J_+|+|J_0|- |J_-|\geq 0$.

For every $a\in\R$ the function $f(a,\cdot)$ admits its strict global minimum at $b$ if and only if
$|J_-|+|J_0|- |J_+| > 0$ und $|J_+|+|J_0|- |J_-| > 0$.
\end{lemma}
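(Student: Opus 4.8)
The plan is to exploit the piecewise-linear structure of $f(a,\cdot)$ established in the previous lemma, where we showed the minimum is attained on the compact interval $[\beta_{\sigma(1)},\beta_{\sigma(m)}]$. First I would compute the one-sided derivatives of $g(b):=f(a,b)$. From the decomposition formula
\[
f(a,b) = b(|J_-|-|J_+|) + \Big(\sum_{j\in J_+} y_j - \sum_{j\in J_-} y_j\Big) - a\Big(\sum_{j\in J_+} x_j - \sum_{j\in J_-} x_j\Big),
\]
the slope of $g$ on the open interval where the sign pattern $(J_+,J_0,J_-)$ is locally constant equals $|J_-|-|J_+|$. The subtlety is at a point $b$ with $J_0\neq\emptyset$: as $b$ increases through such a point, each index in $J_0$ migrates from $J_+$ (just below) to $J_-$ (just above). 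Hence the left derivative at $b$ is $g'_-(b) = (|J_-|) - (|J_+|+|J_0|)$ and the right derivative is $g'_+(b) = (|J_-|+|J_0|) - (|J_+|)$, where $J_+,J_0,J_-$ are taken with respect to the line of slope $a$ through height $b$ at the $x_j$'s — exactly the sets in the statement.

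Next, since $g$ is convex (a sum of absolute values of affine functions), $g$ attains its global minimum at $b$ if and only if $0$ lies in the subdifferential $[g'_-(b),g'_+(b)]$, i.e. $g'_-(b)\le 0\le g'_+(b)$. Substituting the expressions above, $g'_-(b)\le 0$ reads $|J_-| - |J_+| - |J_0| \le 0$, that is $|J_+|+|J_0|-|J_-|\ge 0$, and $g'_+(b)\ge 0$ reads $|J_-|+|J_0|-|J_+|\ge 0$. This is precisely the claimed characterization of a global minimum. For the strict minimum: a convex function attains a strict minimum at $b$ exactly when $0$ lies in the \emph{interior} of the subdifferential, i.e. $g'_-(b)<0<g'_+(b)$ (for a one-dimensional convex piecewise-linear function this is equivalent to $b$ being the unique minimizer, since otherwise the minimum set is a nondegenerate interval on which $g$ is constant with $g'_-=g'_+=0$ at interior points). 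This gives the two strict inequalities $|J_+|+|J_0|-|J_-|>0$ and $|J_-|+|J_0|-|J_+|>0$.

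The main obstacle — really the only place care is needed — is the bookkeeping of how $J_+, J_0, J_-$ change as $b$ passes through a value where $b=\beta_j$ for one or several indices $j$; one must verify that indices not in $J_0$ do not change membership there (immediate, since the corresponding $\beta_j$ are bounded away from $b$) and that every index of $J_0$ contributes $+1$ to $g'_+$ and $-1$ to $g'_-$ relative to the "generic" slope $|J_-|-|J_+|$ with $J_0$ assigned as indicated. I would phrase this via the elementary identity $|t| = \max\{t,-t\}$ and the fact that a finite max of affine functions has one-sided derivative equal to the max (resp.\ min) of the slopes of the active pieces. Once the one-sided derivatives are correctly identified, both equivalences are immediate from the standard optimality condition for convex functions, for which I would cite the appendix on convex functions. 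Existence of the minimum was already secured by the preceding lemma, so nothing further is required there.
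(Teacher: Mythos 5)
Your proposal is correct and follows essentially the paper's own route: the paper likewise tracks how the decomposition behaves for small shifts $b\mapsto b+\varepsilon$ and computes $f(a,b+\varepsilon)=f(a,b)+\varepsilon\bigl(|J_-|\pm|J_0|-|J_+|\bigr)$ depending on the sign of $\varepsilon$, which is exactly your one-sided derivatives $g'_\pm(b)$, and then the optimality criterion follows from convexity. Your only cosmetic slip is the blanket claim that a strict minimum of a convex function forces $0$ to lie in the interior of the subdifferential (false for, e.g., $x\mapsto x^2$), but your parenthetical correctly restricts this to the piecewise-linear case at hand, so the argument stands.
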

\begin{proof}
Let  $J_+$,  $J_0$, $J_-$ be the decomposition of the index set of the function $y(x) =ax+b$. 
If $\max\{ \beta_j-b: j\in J_- \} <\varepsilon < \min\{\beta_j-b: j\in J_+\} $, then the decomposition of the index set of the function $ax+b+\varepsilon$ is equal to the decomposition of $ax+b$.
Now
\begin{align*}
f(a,b+\varepsilon) & = \sum_{j\in J_+} (y_j-ax_j-b-\varepsilon) -\sum_{j\in J_-} (y_j-ax_j-b-\varepsilon)
	+\sum_{j\in J_0} | \varepsilon | \\
& = f(a,b) +\varepsilon \cdot 
	\begin{cases} |J_-|+|J_0|- |J_+| & \text{if }\varepsilon > 0 \\  
	|J_-|-|J_0|- |J_+| & \text{if } \varepsilon < 0 \end{cases}.
\end{align*}
\end{proof}
\begin{cor}
\label{folg:alg-l1-Zerlegung-im-Optimum}
For every $a\in\R$ the function $f(a,\cdot)$ admits its global minimum at $b$ if and only if
$| |J_+| - |J_-| | \leq |J_0|$. \\
Furthermore, $\min\{f(a,b) : b\in\R\} = f(a,b_0)$ if and only if $\beta_k \leq b_0 \leq \beta_l$ with
$$k=\left\lfloor \frac{m+1}{2}\right\rfloor,\, l = \left\lceil \frac{m+1}{2}\right\rceil.$$
\end{cor}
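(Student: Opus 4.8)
The plan is to read off both assertions directly from the preceding lemma, which states that $f(a,\cdot)$ attains its global minimum at $b$ if and only if $|J_-|+|J_0|-|J_+|\geq 0$ and $|J_+|+|J_0|-|J_-|\geq 0$. The first equivalence of the corollary is then purely formal: these two inequalities read $|J_+|-|J_-|\leq |J_0|$ and $|J_-|-|J_+|\leq |J_0|$, and together they are equivalent to $\bigl||J_+|-|J_-|\bigr|\leq |J_0|$.

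For the explicit bounds I would fix the slope $a$ and keep the abbreviation $\beta_j=y_j-ax_j$, so that the decomposition of the index set at the line $y(x)=ax+b$ is $J_+=\{j:\beta_j>b\}$, $J_0=\{j:\beta_j=b\}$, $J_-=\{j:\beta_j<b\}$. Since $|J_-|+|J_0|$ counts the indices with $\beta_j\leq b$ and $|J_+|$ counts those with $\beta_j>b$, and the two counts sum to $m$, the first inequality of the lemma is equivalent to $\#\{j:\beta_j\leq b\}\geq\lceil m/2\rceil$, hence to $b\geq\beta_{\sigma(\lceil m/2\rceil)}$, i.e.\ $b$ is at least the $\lceil m/2\rceil$-th smallest of the $\beta_j$. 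Symmetrically, the second inequality is equivalent to $\#\{j:\beta_j\geq b\}\geq\lceil m/2\rceil$, hence to $b\leq\beta_{\sigma(m+1-\lceil m/2\rceil)}=\beta_{\sigma(\lfloor m/2\rfloor+1)}$. A short computation (using $\lceil a/2\rceil=\lfloor(a+1)/2\rfloor$) gives $\lceil m/2\rceil=\lfloor(m+1)/2\rfloor=k$ and $\lfloor m/2\rfloor+1=\lceil(m+1)/2\rceil=l$. Combining this with the lemma, $f(a,\cdot)$ attains its global minimum at $b_0$ exactly when $\beta_{\sigma(k)}\leq b_0\leq\beta_{\sigma(l)}$; since the first Lemma of this subsection already guarantees that a minimizing $b_0$ exists, this interval is precisely the set of minimizers, and in particular $\min\{f(a,b):b\in\R\}=f(a,\beta_{\sigma(k)})$.

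The argument is essentially bookkeeping on top of the preceding lemma, so I do not expect a genuine obstacle. The one place that needs care is the passage from the counting inequalities to the order-statistic inequalities: one must handle the ties $\beta_j=b$ correctly — an index with $\beta_j=b$ contributes simultaneously to $\#\{j:\beta_j\leq b\}$ and to $\#\{j:\beta_j\geq b\}$, which is exactly why the two middle order statistics $\beta_{\sigma(k)}$ and $\beta_{\sigma(l)}$ appear (and coincide precisely when $m$ is odd) — and one must make sure the two floor/ceiling identities are verified in both the even and the odd case.
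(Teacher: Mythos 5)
Your proposal is correct and takes essentially the same route as the paper: both assertions are read off from the preceding lemma, with the counting of indices relative to the sorted values $\beta_{\sigma(j)}$ producing exactly the bounds $\beta_{\sigma(k)}\leq b_0\leq\beta_{\sigma(l)}$ with $k=\lfloor (m+1)/2\rfloor$, $l=\lceil (m+1)/2\rceil$. The only cosmetic difference is that you turn the counting inequalities directly into order-statistic inequalities, while the paper verifies the three cases $b<\beta_{\sigma(k)}$, $\beta_{\sigma(k)}\leq b\leq\beta_{\sigma(l)}$ and $b>\beta_{\sigma(l)}$ separately.
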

\begin{proof}
The inequalities $|J_-|+|J_0|- |J_+|\geq 0$ and $|J_+|+|J_0|- |J_-|\geq 0$ are both satisfied if and only if
$|J_0|\geq ||J_+|-|J_-||$.

If $ \beta_k \leq b \leq \beta_l$, then
\begin{itemize}
\item $|J_-|+|J_0| \geq k$, $|J_+|\leq m-k \leq |J_-|+|J_0|$, since $2k\geq m$.
\item $|J_+|+|J_0| \geq m-l+1$, $|J_-|\leq l-1 \leq |J_+|+|J_0|$, since $2(l-1)\leq m$.
\end{itemize}

If $b<\beta_k$, then $|J_-|+|J_0|\leq k-1$ and $|J_+|\geq m-(k-1) >|J_-|+|J_0|$, since $2(k-1)<m$.

If $b>\beta_l$, then $|J_+|+|J_0|\leq m-l$ and $|J_-|\geq l >|J_+|+|J_0|$, since $2l>m$.
\end{proof}
\begin{cor}
\label{folg:alg-l1-Verschiebung-in-Punkt}
Among all optimal lines with fixed slope $a\in \R$ 
there exists at least one that contains one of the points $p_j$.
\end{cor}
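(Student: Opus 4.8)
The plan is to read the result directly off Corollary~\ref{folg:alg-l1-Zerlegung-im-Optimum}. That corollary says that, for a fixed slope $a\in\R$, the set of optimal intercepts is exactly the closed interval $[\beta_{\sigma(k)},\beta_{\sigma(l)}]$, where $\beta_j=y_j-ax_j$, the permutation $\sigma$ is chosen so that $\beta_{\sigma(1)}\le\cdots\le\beta_{\sigma(m)}$, and $k=\lfloor(m+1)/2\rfloor$, $l=\lceil(m+1)/2\rceil$. So I would simply pick an endpoint of this interval, say $b_0:=\beta_{\sigma(k)}$, which is an admissible minimizer of $f(a,\cdot)$ by that corollary.

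Next I would verify that the line $\ell\colon y=ax+b_0$ actually passes through one of the given points. The condition $p_j\in\ell$ is equivalent to $b_0=y_j-ax_j=\beta_j$, and by construction $b_0=\beta_{\sigma(k)}$, so $\ell$ contains $p_{\sigma(k)}$. Hence $\ell$ is an optimal line with slope $a$ through a point of the set, which is exactly the claim. (The endpoint $b_0=\beta_{\sigma(l)}$ works equally well and yields a line through $p_{\sigma(l)}$; when $m$ is odd one has $k=l$, the interval of optimal intercepts degenerates to the single value $\beta_{\sigma(k)}$, and the unique optimal line with slope $a$ automatically contains $p_{\sigma(k)}$.)

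The only subtlety worth flagging is that this argument relies on the interval of optimal intercepts being \emph{closed}, so that its endpoints are themselves minimizers; this is precisely the non-strict part of Corollary~\ref{folg:alg-l1-Zerlegung-im-Optimum} (the characterization $\big||J_+|-|J_-|\big|\le|J_0|$ with non-strict inequalities). Apart from invoking that, there is no real obstacle here — all the analytic work was already carried out in the preceding lemmas on translation of the line.
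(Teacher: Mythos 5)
Your proof is correct and follows the same route as the paper: the paper's own one-line argument also takes $b$ equal to an endpoint $\beta_{\sigma(k)}$ (or $\beta_{\sigma(l)}$) of the interval of optimal intercepts from Corollary~\ref{folg:alg-l1-Zerlegung-im-Optimum} and observes that then $J_0\ne\emptyset$, i.e.\ the line passes through one of the $p_j$. Your write-up just makes the endpoint argument and the closedness of the optimal interval explicit.
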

\begin{proof}
If $b=\beta_k$ or $b=\beta_l$, then $J_0\ne \emptyset$.
\end{proof}
\subsubsection{Rotation}
\label{subsubsec:Drehung-l1-alg}
For every point $p_k$ we consider the values of $f$ at lines containing $p_k$.
These lines are obtained by rotating one of them around $p_k$.
The condition $y(x_k)= y_k$ implies $b=y_k-ax_k$. 
For any fixed index $k$ we investigate the function $f:\R\to\R$ defined by
\begin{align*}
f(a) & :=f(a,y_k-ax_k) = \sum_{j=1}^m |y_j-ax_j-(y_k-ax_k)| \\
& = \sum_{j=1}^m |(y_j-y_k)-a(x_j-x_k)| = \sum_{j\ne k} |\tilde{y}_j-a\tilde{x}_j| 
\end{align*}
with $ \tilde{y}_j:=y_j-y_k$ and $\tilde{x}_j := x_j-x_k$.

The coordinate change $(x_j,y_j)\to (\tilde{x}_j,\tilde{y}_j)$ corresponds to the translation of the origin
to the center of the rotation.
The decomposition of the index set is now given by
$J_-=\{ j:\tilde{y}_j<a\tilde{x}_j\}$, $J_0=\{ j:\tilde{y}_j=a\tilde{x}_j\} \ni k$ and 
$J_+=\{ j:\tilde{y}_j>a\tilde{x}_j\}$. It follows
$$f(a) = \sum_{j\in J_+}\tilde{y}_j-\sum_{j\in J_-} \tilde{y}_j 
	- a \left(  \sum_{j\in J_+}\tilde{x}_j-\sum_{j\in J_-} \tilde{x}_j\right).$$
Set $\alpha_j:=\tilde{y}_j/\tilde{x}_j$ for $j\ne k$.
Let $\tau$ be a permutation satisfying $\tau(m)=k$ and 
$\alpha_{\tau(1)}\leq \ldots \leq \alpha_{\tau(m-1)}$ (see Figure \ref{fig:Drehalg}).
\begin{lemma}
\label{lem:alg-L1-Drehintervall}
The function $f:a\mapsto f(a)$ admits a global minimum.\\
If $f(a_0) =  \min\{ f(a):a\in\R\}$, then $\alpha_{\tau(1)} \leq a_0 \leq \alpha_{\tau(m-1)}$.
\end{lemma}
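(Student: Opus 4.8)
The plan is to mimic the structure of the translation lemma from subsection~\ref{subsubsec:Parallel-l1-alg}. The key observation is that the function $f(a)=\sum_{j\ne k}|\tilde y_j-a\tilde x_j|$ is a sum of absolute values of affine functions of $a$, hence continuous, piecewise linear, convex and nonnegative; existence of a global minimum therefore follows once we show that $f$ is eventually increasing as $a\to+\infty$ and eventually decreasing as $a\to-\infty$ (or more precisely that $f$ is coercive, i.e.\ $f(a)\to\infty$ as $|a|\to\infty$).

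First I would analyse the behaviour of $f$ for very large $|a|$. For $a > \alpha_{\tau(m-1)}=\max_{j\ne k}\alpha_j$, every index $j$ with $\tilde x_j>0$ has $\tilde y_j < a\tilde x_j$ and every index with $\tilde x_j<0$ has $\tilde y_j>a\tilde x_j$; in either case the sign of $\tilde y_j-a\tilde x_j$ is constant (opposite to the sign of $\tilde x_j$), so on the ray $a>\alpha_{\tau(m-1)}$ the decomposition $J_+,J_-$ is frozen and, using the displayed formula for $f(a)$ on such a region, $f$ is affine with slope $-\bigl(\sum_{j\in J_+}\tilde x_j-\sum_{j\in J_-}\tilde x_j\bigr)=\sum_{j\ne k}|\tilde x_j|>0$ (the strict inequality holds because the $x_j$ are pairwise distinct, so at most one $\tilde x_j$ vanishes and $m\ge 2$ forces at least one nonzero $\tilde x_j$ among $j\ne k$). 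Symmetrically, for $a<\alpha_{\tau(1)}$ the decomposition is again frozen with the signs reversed, and $f$ is affine there with slope $-\sum_{j\ne k}|\tilde x_j|<0$. Hence $f$ is strictly decreasing on $(-\infty,\alpha_{\tau(1)}]$ and strictly increasing on $[\alpha_{\tau(m-1)},+\infty)$.

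Next, continuity of $f$ together with these monotonicity statements gives
$$\inf_{a\in\R}f(a)=\inf_{\alpha_{\tau(1)}\le a\le\alpha_{\tau(m-1)}}f(a)=\min_{\alpha_{\tau(1)}\le a\le\alpha_{\tau(m-1)}}f(a),$$
the last equality because $f$ is continuous on a compact interval. This establishes both the existence of a global minimiser $a_0$ and the containment $\alpha_{\tau(1)}\le a_0\le\alpha_{\tau(m-1)}$, exactly as in the earlier lemma. (Convexity of $f$ is an alternative route but is not strictly needed; the monotonicity-on-the-tails argument is self-contained.)

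I expect the only genuine subtlety to be the edge case where some $\tilde x_j=0$, i.e.\ $x_j=x_k$ — but this cannot happen under the standing assumption $x_j\ne x_k$ for $j\ne k$, so the quantities $\alpha_j=\tilde y_j/\tilde x_j$ are all well defined and finite, and $\sum_{j\ne k}|\tilde x_j|$ is strictly positive. Thus there is no real obstacle; the proof is a direct transcription of the translation argument with $b$ replaced by the slope $a$ and $\beta_j$ replaced by $\alpha_j$.
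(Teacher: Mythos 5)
Your proof is correct and follows essentially the same route as the paper: freeze the decomposition $J_+,J_-$ on the two rays $a>\alpha_{\tau(m-1)}$ and $a<\alpha_{\tau(1)}$, observe that $f$ is affine there with slope $\pm\sum_{j\ne k}|\tilde{x}_j|\ne 0$, and conclude via continuity that the infimum is attained on the compact interval $[\alpha_{\tau(1)},\alpha_{\tau(m-1)}]$. Your explicit slope computation even gets the signs of the two tails right (strictly decreasing on the left, strictly increasing on the right), which the paper's proof states somewhat carelessly.
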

\begin{proof}
If $a>\alpha_{\tau(m-1)}$, then $J_0=\{ k\}$, $J_+=\{ j:\tilde{x}_j<0\}$, $J_-=\{ j:\tilde{x}_j >0\}$ and
$f'(a) = \sum_{j\in J_-} \tilde{x}_j - \sum_{j\in J_+} \tilde{x}_j > 0.$

If $a<\alpha_{\tau(1)}$, then $J_0=\{ k\}$, $J_+=\{ j:\tilde{x}_j>0\}$, $J_-=\{ j:\tilde{x}_j <0\}$ and
$f'(a) = \sum_{j\in J_-} \tilde{x}_j - \sum_{j\in J_+} \tilde{x}_j > 0.$

Continuity of $f$ yields
$$\inf_{a\in\R} f(a) = \inf_{\alpha_{\tau(1)}\leq a \leq \alpha_{\tau(m-1)}} f(a) 
= \min_{\alpha_{\tau(1)}\leq a \leq \alpha_{\tau(m-1)}} f(a).$$
\end{proof}
\begin{cor}
The function $f:\R^2\to\R$ defined by $f(a,b)=\sum_{j=1}^m |y_j-ax_j-b|$ admits a global minimum.
This global minimum is attained at a bounded set.
\end{cor}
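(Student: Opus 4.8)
The plan is to combine the two reduction lemmas already proved in this subsection. First I would establish existence of a global minimum. The function $f(a,b)=\sum_{j=1}^m|y_j-ax_j-b|$ is continuous, convex, nonnegative, and piecewise linear. For the existence claim I would argue as follows: fix the slope-zero section, or better, use the two previous results directly. By Corollary~\ref{folg:alg-l1-Verschiebung-in-Punkt}, for every slope $a$ there is an optimal line with that slope passing through some $p_j$; hence
$$\inf_{(a,b)\in\R^2} f(a,b) = \inf_{k\in\{1,\ldots,m\}}\ \inf_{a\in\R} f(a,y_k-ax_k).$$
For each fixed $k$, Lemma~\ref{lem:alg-L1-Drehintervall} tells us the inner infimum is attained (it is a minimum over the compact interval $[\alpha_{\tau(1)},\alpha_{\tau(m-1)}]$). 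Since the outer infimum is over the finite set $\{1,\ldots,m\}$, it too is attained. Therefore $f$ has a global minimum, attained at some line through some $p_k$.

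Second I would show the optimal set is bounded. Let $\mu=\min f$ and let $M=\{(a,b):f(a,b)=\mu\}$ be the (closed, convex) set of optimal parameters. To bound $|a|$ on $M$: for any $(a,b)\in M$, by Corollary~\ref{folg:alg-l1-Zerlegung-im-Optimum} applied at slope $a$, the value $\mu=f(a,b)=\min_b f(a,b)$; but we can also translate this optimal line vertically onto a point $p_k$ (Corollary~\ref{folg:alg-l1-Verschiebung-in-Punkt}) without changing $f$, so $\mu=f(a,y_k-ax_k)$ for some $k$. Then Lemma~\ref{lem:alg-L1-Drehintervall} for that $k$ forces $a\in[\alpha_{\tau(1)}^{(k)},\alpha_{\tau(m-1)}^{(k)}]$, a bounded interval. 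Taking the union over the finitely many indices $k$ and the finitely many slope-candidates bounds $a$ uniformly on $M$. To bound $|b|$ on $M$: once $a$ ranges over a compact set $A$, Corollary~\ref{folg:alg-l1-Zerlegung-im-Optimum} says any optimal $b$ satisfies $\beta_k(a)\le b\le\beta_l(a)$ where $\beta_j(a)=y_j-ax_j$ are continuous (indeed affine) in $a$ and $k=\lfloor(m+1)/2\rfloor$, $l=\lceil(m+1)/2\rceil$; since $\max_j|\beta_j(a)|$ is bounded on the compact set $A$, the value $b$ is bounded as well. Hence $M$ is bounded.

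The main obstacle is a bookkeeping subtlety rather than a deep one: the permutations $\sigma$, $\tau$ and hence the intervals $[\alpha_{\tau(1)},\alpha_{\tau(m-1)}]$ and the indices achieving the median depend on $a$ (and on $k$), so one must be careful that the bound on $a$ extracted from Lemma~\ref{lem:alg-L1-Drehintervall} is genuinely uniform. This is resolved by noting there are only finitely many indices $k$, and for each $k$ the interval $[\alpha_{\tau(1)}^{(k)},\alpha_{\tau(m-1)}^{(k)}]=[\min_{j\ne k}\alpha_j^{(k)},\max_{j\ne k}\alpha_j^{(k)}]$ is a fixed bounded interval independent of where inside it $a$ lies; so the overall slope bound is $\max_k\max_{j\ne k}|\alpha_j^{(k)}|$, a finite quantity. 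An even cleaner alternative, which I would mention, is to invoke coercivity: $f(a,b)\to\infty$ as $\|(a,b)\|\to\infty$ because $f$ is piecewise linear with all "directional slopes at infinity" strictly positive — this follows since for $(a,b)$ far out at least two of the $m$ terms $|y_j-ax_j-b|$ grow linearly with matching sign — and a continuous coercive function on $\R^2$ attains its minimum on a bounded set. Either route gives the corollary; I would present the first, since it reuses exactly the lemmas just proved and needs no new estimate.
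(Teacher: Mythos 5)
Your proposal is correct and follows essentially the same route as the paper: the paper's proof bounds the optimal slope by the extreme quotients $(y_j-y_k)/(x_j-x_k)$ obtained from Corollary \ref{folg:alg-l1-Verschiebung-in-Punkt} together with Lemma \ref{lem:alg-L1-Drehintervall}, and then bounds $b$ by the extreme values $y_j-a_ix_j$, exactly as you do. Your handling of existence (reducing to finitely many one-variable minimization problems, each attained) is somewhat more explicit than the paper's terse statement, but it uses the same lemmas and no new ideas.
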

\begin{proof}
If $f(a,b)$ is a global minimum of $f$, then $a_0\leq a \leq a_1$ and $b_0\leq b\leq b_1$ where
$$a_0 =\min_{j\ne k}\frac{y_j-y_k}{x_j-x_k} , \,
	a_1=\max_{j\ne k} \frac{y_j-y_k}{x_j-x_k},\,
	b_0 = \min_{i,j} y_j-a_ix_j, \, b_1= \max_{i,j}Êy_j-a_ix_j.$$
\end{proof}
\begin{lemma}
\label{lem:alg-l1-Drehung}
For every index $k$ holds $\min\{ f(a):a\in\R\} = \min\{Êf(\alpha_j) : j\ne k\}$. 
\end{lemma}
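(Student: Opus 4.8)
\textbf{Plan of proof for Lemma~\ref{lem:alg-l1-Drehung}.}
The claim is that, among the lines through the fixed point $p_k$, the minimum of $f(a)=\sum_{j\ne k}|\tilde y_j-a\tilde x_j|$ over all $a\in\R$ is already attained at one of the finitely many slopes $\alpha_j=\tilde y_j/\tilde x_j$, $j\ne k$. By Lemma~\ref{lem:alg-L1-Drehintervall} we already know the infimum is a minimum and is attained somewhere in the compact interval $[\alpha_{\tau(1)},\alpha_{\tau(m-1)}]$, so only finiteness of the candidate set remains to be justified. The plan is to exploit the piecewise-linear structure of $f$ on that interval: $f$ is a finite sum of absolute-value functions $a\mapsto|\tilde y_j-a\tilde x_j|$, each of which is affine except at the single breakpoint $a=\alpha_j$ (when $\tilde x_j\ne 0$) or globally affine (when $\tilde x_j=0$, which cannot happen here since the $x$-coordinates are pairwise distinct, so $\tilde x_j\ne 0$ for $j\ne k$). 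Hence $f$ is piecewise affine with breakpoints contained in $\{\alpha_j:j\ne k\}$.

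The key step is the standard convex-piecewise-linear observation: a convex function that is affine on each subinterval determined by the breakpoints attains its minimum either at a breakpoint or on an entire subinterval between two consecutive breakpoints (or on an unbounded piece). First I would order the distinct values among $\alpha_{\tau(1)}\le\cdots\le\alpha_{\tau(m-1)}$ and note that on each open interval $(\alpha_{\tau(i)},\alpha_{\tau(i+1)})$ the decomposition $J_+,J_-,J_0$ is constant, so by the displayed formula for $f(a)$ the function is affine there with slope $\sum_{j\in J_-}\tilde x_j-\sum_{j\in J_+}\tilde x_j$. Convexity of $f$ (it is a sum of convex functions) then forces these slopes to be nondecreasing in $i$. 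Therefore the minimum over $[\alpha_{\tau(1)},\alpha_{\tau(m-1)}]$ is attained at the first breakpoint $\alpha_{\tau(i_0)}$ at which the slope changes sign (from $\le 0$ to $\ge 0$); in particular it is attained at some $\alpha_j$. Combining this with Lemma~\ref{lem:alg-L1-Drehintervall}, which rules out minima outside the interval, yields $\min_{a\in\R}f(a)=\min_{j\ne k}f(\alpha_j)$, and conversely each $f(\alpha_j)\ge\min_a f(a)$ trivially, giving equality.

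The main obstacle, such as it is, is purely bookkeeping: one must handle the case of repeated values among the $\alpha_j$ (several points sharing a slope through $p_k$) so that ``consecutive breakpoints'' refers to distinct values, and one must phrase the sign-change argument so that it also covers the degenerate situations where $f$ is constant on a whole subinterval (then both endpoints, which are some $\alpha_j$, are minimizers) or where the minimum slope already vanishes at the left endpoint. None of this requires new ideas beyond convexity and the explicit affine expression for $f(a)$ already derived in subsection~\ref{subsubsec:Drehung-l1-alg}; I would keep the write-up short by citing Lemma~\ref{lem:alg-L1-Drehintervall} for the localization and invoking the convex-piecewise-linear minimization principle (which is presumably recorded in the appendix on convex functions) for the rest.
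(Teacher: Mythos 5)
Your proposal is correct and follows essentially the same route as the paper: both arguments rest on the fact that $f$ is convex and affine on each interval between consecutive slopes $\alpha_j$ (the paper phrases this via differentiability at points with $J_0=\{k\}$ and $f'(a)=0$ forcing constancy on the adjacent piece, you via monotonicity of the piecewise slopes), combined with Lemma \ref{lem:alg-L1-Drehintervall} to confine the minimizer to $[\alpha_{\tau(1)},\alpha_{\tau(m-1)}]$. The bookkeeping points you raise (repeated $\alpha_j$, constancy on a whole piece) are exactly the degenerate cases the paper's one-line argument also absorbs, so no substantive difference remains.
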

\begin{proof}
If $J_0=\{ k\}$, then $f$ is differentiable at the line corresponding to that decomposition. 
Note that  $J_0=\{ k\}$ if and only if $a\ne \alpha_j$ for all $j\ne k$. 
If $f'(a) = 0$ for $a\in\R$ with $a_{\sigma(k)}<a<a_{\sigma(k+1)}$, then $f$ is constant on the interval
$[a_{\sigma(k)},a_{\sigma(k+1)}]$.
\end{proof}
\subsubsection{The set of optimal lines}
Corollary \ref{folg:alg-l1-Verschiebung-in-Punkt} and Lemma \ref{lem:alg-l1-Drehung} imply the
existence of a line containing at least two  $p_j\ne p_k$ 
among all lines with minimal algebraic $L^1$-distance to the set $\{ p_1,\ldots,p_m\}$.
There are $\leq m(m-1)/2$ lines with this property.
It is sufficient to check the lines that additionally satisfy $| |J_+| - |J_-| | \leq |J_0|$ 
to find the minimum of $f$.

The line which contains the two points $p_j$ and $p_k$ with $j\ne k$ is called $g_{jk}$.
The line $g_{jk}$ is the graph of the linear function $y(x) = a_{jk}x+b_{jk}$ with
\begin{equation}
a_{jk} = \frac{y_j-y_k}{x_j-x_k} \text{ and } b_{jk} = \frac{y_kx_j-y_jx_k}{x_j-x_k}.
\end{equation}
The line $g_{jk}$ corresponds to a point in the domain of definition $\{Ê(a,b):a,b\in\R\}$ 
of the function $f$. This point $(a,b)$ is  the unique solution of the two equations $y_j=ax_j+b$ and $y_k=ax_k+b$, since $x_j\ne x_k$. 
We denote this point of intersection by $g_{jk}$ too, i.e., $g_{jk}=(a_{jk},b_{jk})$.
Let $E$ be the set of optimal lines containing at least two of the points $p_j$.
More precisely,
\begin{equation}
E:=\{ g_{jk}: j>k \text{ and } f(a_{jk},b_{jk}) = \min\{ f(a,b):a,b\in\R^2\} \}.
\end{equation}
\begin{theorem}
\label{satz:alg-L1-Optimenge}
Let $p_j=(x_j,y_j)^T$, $j=1,\ldots,m$,  such that $x_j\ne x_k$ for all $j\ne k$.
A linear function $y(x)=ax+b$ has minimal algebraic $L^1$-distance to the set $\{ p_1,\ldots,p_m\}$
if and only if $(a,b)$ is contained in the convex hull of $E$.
\end{theorem}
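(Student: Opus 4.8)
The plan is to identify the minimizer set
$$M=\{(a,b)\in\R^2: f(a,b)=\textstyle\min f\},\qquad f(a,b)=\sum_{j=1}^m|y_j-ax_j-b|,$$
as a bounded polytope and to show that every one of its vertices lies in $E$, so that $M=\mathrm{conv}(E)$.

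First I would collect the structural facts about $M$. As already observed, $f$ is continuous, convex and piecewise linear, and a global minimum exists; hence $M$ is a nonempty closed convex set, and by the corollary following Lemma~\ref{lem:alg-L1-Drehintervall} it is bounded. Since $|y_j-ax_j-b|=\max\{y_j-ax_j-b,\;ax_j+b-y_j\}$, the sublevel set $M=\{f\le\min f\}$ is an intersection of finitely many closed half-planes, so $M$ is a convex polyhedron; being also bounded, it is a polytope, and therefore equals the convex hull of its finitely many extreme points. The inclusion $\mathrm{conv}(E)\subseteq M$ is then immediate: by definition every $g_{jk}\in E$ is a minimizer of $f$, so $E\subseteq M$, and $M$ is convex.

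For the reverse inclusion it suffices to prove $\mathrm{ext}(M)\subseteq E$. Let $(a^\ast,b^\ast)$ be an extreme point of $M$, let $g^\ast\colon y=a^\ast x+b^\ast$ be the corresponding line, and let $J_+,J_0,J_-$ be the decomposition of the index set attached to $(a^\ast,b^\ast)$ as in subsection~\ref{subsubsec:ZerlegungIndexmenge}. Since $(a^\ast,b^\ast)\in M$, it is optimal, so it belongs to $E$ as soon as $g^\ast$ contains at least two of the points, i.e.\ as soon as $|J_0|\ge 2$. I would prove $|J_0|\ge 2$ by ruling out the other two cases. If $J_0=\emptyset$, then on the open set $U$ of parameters giving the same signs $\operatorname{sign}(y_j-ax_j-b)$ for all $j$ the function $f$ is affine, and $(a^\ast,b^\ast)\in U$; an affine function attaining its global minimum at an interior point of an open set is constant there, so $U\subseteq M$, contradicting that $(a^\ast,b^\ast)$ is extreme. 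If $J_0=\{k\}$, I would use the rotation about $p_k$ from subsection~\ref{subsubsec:Drehung-l1-alg}: writing $b^\ast=y_k-a^\ast x_k$, the slope $a^\ast$ minimizes $a\mapsto f(a,y_k-ax_k)$, and $a^\ast\ne\alpha_j$ for all $j\ne k$ (otherwise $g^\ast$ would also pass through $p_j$, forcing $|J_0|\ge 2$). By Lemma~\ref{lem:alg-L1-Drehintervall} we then have $\alpha_{\tau(1)}<a^\ast<\alpha_{\tau(m-1)}$, so $a^\ast$ lies strictly between two consecutive values $\alpha_{\tau(i)}<\alpha_{\tau(i+1)}$; on that open slope-interval the decomposition is constant, hence $a\mapsto f(a,y_k-ax_k)$ is affine there, and having an interior minimum it is constant. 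Thus the whole nondegenerate segment $\{(a,y_k-ax_k):\alpha_{\tau(i)}\le a\le\alpha_{\tau(i+1)}\}$ lies in $M$ and contains $(a^\ast,b^\ast)$ in its relative interior, again contradicting extremality. Hence $|J_0|\ge 2$, so $(a^\ast,b^\ast)$ is one of the intersection points $g_{jk}$ and lies in $E$. Therefore $M=\mathrm{conv}(\mathrm{ext}(M))\subseteq\mathrm{conv}(E)\subseteq M$, i.e.\ $M=\mathrm{conv}(E)$, and a linear function $y(x)=ax+b$ is algebraically $L^1$-optimal exactly when $(a,b)\in\mathrm{conv}(E)$.

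The step I expect to be the main obstacle is the case $J_0=\{k\}$: one must make sure that an optimal slope $a^\ast$ avoiding all the $\alpha_j$ really sits in the interior of one of the intervals on which $f$ is linear along the rotation, so that a genuinely nondegenerate segment of optimal lines is produced and extremality is violated. This is precisely where Lemma~\ref{lem:alg-L1-Drehintervall} together with the explicit formula for the derivative of $f$ along rotations from subsection~\ref{subsubsec:Drehung-l1-alg} is used; the remaining arguments are routine facts about convex polyhedra, for which I would refer to the appendix.
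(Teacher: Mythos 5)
Your argument is correct, and it reaches the conclusion by a genuinely different route than the paper. The paper argues directly about an arbitrary minimizer $(a_0,b_0)$: if the optimal line meets exactly one point $p_k$, the rotation statement (Lemma \ref{lem:alg-l1-Drehung}, resting on Lemma \ref{lem:alg-L1-Drehintervall}) places $(a_0,b_0)$ on a segment between two points $g_{j_1k},g_{j_2k}\in E$; if it meets no point, the translation statement (Corollary \ref{folg:alg-l1-Zerlegung-im-Optimum}) slides it along the $b$-direction onto two optimal lines through some $p_k$ and $p_l$, which are handled by the first case — so every minimizer is exhibited explicitly as a convex combination of elements of $E$, with no appeal to extreme-point theory. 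You instead observe that $M=\{f\le\min f\}$ is a bounded polyhedron (writing $f$ as a maximum of the $2^m$ sign-pattern affine functions), invoke the fact that a compact convex set, in particular a bounded polyhedron, is the convex hull of its (finitely many) extreme points, and then show $\mathrm{ext}(M)\subseteq E$ by ruling out $|J_0|\le 1$ at an extreme point: for $J_0=\emptyset$ the sign-pattern cell is open and $f$ is affine on it, so the cell lies in $M$ and the point is interior; for $J_0=\{k\}$ the rotation interval between consecutive $\alpha_j$'s gives a nondegenerate optimal segment through $(a^\ast,b^\ast)$, exactly as in the paper's rotation analysis. Your version buys a sharper structural picture — $M$ is a polytope whose vertices all lie in $E$, which is the assertion announced before the theorem — and cleanly separates the convex-geometric bookkeeping from the translation/rotation computations; the paper's version buys self-containedness, since it needs nothing beyond convexity of $M$ and the two lemmas already proved. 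One small caveat: the polytope facts you use (Minkowski's theorem that a compact convex set is the convex hull of its extreme points, and finiteness of the vertex set of a bounded polyhedron) are standard but are \emph{not} in the paper's appendix, which only contains elementary statements on convex sets and functions, so you would have to supply or cite them rather than "refer to the appendix"; with that reference fixed, the proof stands.
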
 
\begin{proof}
If  $f(a_0,b_0)$ is the global minimum of $f$ and $y_k=a_0x_k+b_0$ for exactly one index 
$1\leq k\leq m$, then Lemma \ref{lem:alg-l1-Drehung} implies the existence of indices 
$1\leq j_1,j_2\leq m$ satisfying $j_1,j_2\ne k$ and $f(g_{j_1k})=f(g_{j_2k})=f(a_0,b_0)$. 
Consequently, the point $(a_0,b_0)$ lies on the line $y_k=ax_k+b$ between the points
$g_{j_1k}, g_{j_2k} \in E$.

If $f(a_0,b_0)$ is the global minimum of $f$ and $y_j\ne a_0x_j+b_0$ for all $j$,
then Corollary \ref{folg:alg-l1-Zerlegung-im-Optimum} implies the existence if indices $k,l$ 
satisfying $f(a_0,b_0) = f(a_0,b)$ for all $b_k:=y_k-a_0x_k\leq b \leq y_l-a_0y_l=:b_l$.
Since $f(a_0,b_k)=f(a_0,b_l)$ is the global minimum of $f$ and the lines corresponding to 
$(a_0,b_k)$ and $(a_0,b_l)$ contain the point $p_k$ respectively $p_l$, the points
$(a_0,b_k)$ and $(a_0,b_l)$ are contained in the convex hull of $E$. 
Now $(a_0,b_0)$ is a point of the line segment from $(a_0,b_k)$ to $(a_0,b_l)$.
Therefore, $(a_0,b_0)$ is in the convex hull of $E$. 
\end{proof}
\begin{rem}
With a small change in the definition of $E$ 
Theorem \ref{satz:alg-L1-Optimenge}  remains true, if the condition $x_j\ne x_k$ for all $j\ne k$ 
is weakened to existence of indices $j\ne k$ satisfying $x_j\ne x_k$.
This weaker assumption is sufficient to bound  interesting slopes using 
Lemma \ref{lem:alg-L1-Drehintervall}, because $\alpha_j \ne \pm\infty$ for at least on index $j$.
To adjust the definition of $E$ we then regard only lines $g_{jk}$ with $x_j\ne x_k$.

However, if $x_1=\ldots=x_m=\bar{x}$, then any graph of a linear function contains at most one of 
the points $p_j$. This would mean that $E=\emptyset$.
But note that the algebraic distances between the points $p_j$ and the linear function 
$y(x) = a(x-\bar{x}) + b$ are independent of  the variable $a$. 
We substitute the set $E$ by the set $B:=\{ y_k: f(0,y_k) = \min\{ f(0,y_j):j=1,\ldots,m\}\} \subset \R$.
Now, $y(x)=a(x-\bar{x})+b$ is an algebraically $L^1$-optimal linear function
if and only if $b$ is contained in the convex hull of $B$.
\end{rem}
\subsubsection{Examples}
\label{subsubsec:Beispiele-alg-l1}
\paragraph{Three points:}
We show the existence of a unique linear function with minimal algebraic $L^1$-distance to three given
pairwise distinct points.
Let $x_1<x_2<x_3$ and $y_1,y_2,y_3\in\R$.
There exists a line containing all three points $(x_j,y_j)^T$ if and only if
$(y_2-y_1)/(x_2-x_1) = (y_3-y_1)/(x_3-x_1)$.

If $(y_2-y_1)/(x_2-x_1) \ne (y_3-y_1)/(x_3-x_1)$, then
\begin{align*}
f(g_{31}) & = \left| y_2-\frac{y_3-y_1}{x_3-x_1}(x_2-x_1)-y_1\right| \\
&  = \frac{|(y_2-y_1)(x_3-x_1)-(y_3-y_1)(x_2-x_1)|}{x_3-x_1} \\
	& = \frac{|(y_2-y_1)(x_3-x_2)-(y_3-y_2)(x_2-x_1)|}{x_3-x_1} \\
f(g_{21}) & = \left| y_3-\frac{y_2-y_1}{x_2-x_1}(x_3-x_1)-y_1\right| \\
	& = \frac{|(y_3-y_1)(x_2-x_1)-(y_2-y_1)(x_3-x_1)|}{x_2-x_1} >  f(g_{31}) \\
f(g_{32}) & = \left| y_1-\frac{y_3-y_2}{x_3-x_2}(x_1-x_2)-y_2\right| \\
	& = \frac{|(y_1-y_2)(x_3-x_2)-(y_3-y_2)(x_1-x_2)|}{x_3-x_2} >  f(g_{31})
\end{align*} 
because $x_3-x_1 > x_2-x_1$ and $x_3-x_1>x_3-x_2$.
There is a unique line with minimal algebraic $L^1$-distance to $\{p_1,p_2,p_3\}$. 
It is the line through $p_1$ and $p_3$.
\paragraph{Family of optimal lines for $m=4$:}
We consider the points $p_1=(0,0)^T$, $p_2=(1,1)^T$, $p_3=(2,2)^T$ and $p_4=(3,3/2)^T$ 
(see Figure \ref{fig:algl1m4}). Note that $g_{21} = g_{31} = g_{32}$, 
because the three points $p_1$, $p_2$ and $p_3$ are collinear.
Now $f(g_{21})=3/2$, $f(g_{43}) = 9/2$, $f(g_{42}) = 3/2$, $f(g_{41}) = 3/2$.
Thus, the set $M$ of optimal lines is the convex hull of
$g_{21} = g_{31} = g_{32}$, $g_{41}$ and $g_{42}$ (see Figure \ref{fig:algl1m4def}). 
\begin{figure}
\noindent
\begin{minipage}[t]{.49\linewidth}
  \mbox{\scalebox{.25}{\includegraphics{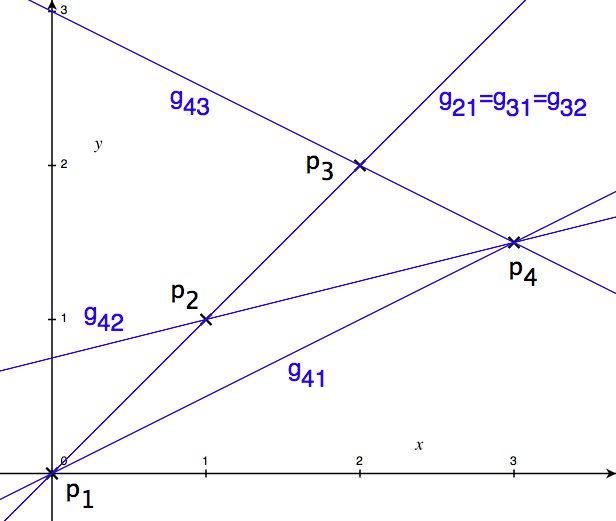}}}
\caption{}
\label{fig:algl1m4}
\end{minipage}
\begin{minipage}[t]{.50\linewidth}
  \mbox{\scalebox{.26}{\includegraphics{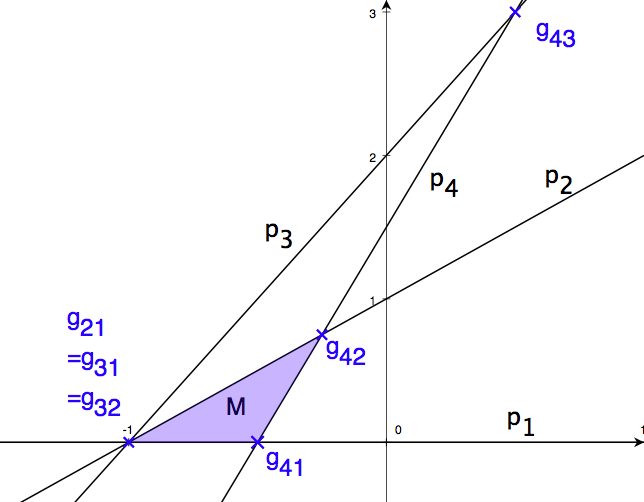}}}
\caption{}
\label{fig:algl1m4def}
\end{minipage}
\end{figure}
\paragraph{Invariance under reflection for $m=5$:}
Let us consider the five points $p_1=(-2,1)^T$, $p_2=(-1,-1)^T$, $p_3=(0,0)^T$, $p_4=(1,-1)^T$ and 
$p_5=(2,1)^T$ (see Figure \ref{fig:algl1m5}). 
This set is symmetric with respect to the reflection in the $y$-axis.
Hence, the set $M$ is invariant under this reflection.
The lines $g_{51}$, $g_{21}$, $g_{42}$ and $g_{54}$ appear as dotted lines 
in Figure \ref{fig:algl1m5}, because they do not satisfy the condition $| |J_+| - |J_-| | \leq |J_0|$.
Now $f(g_{31})= f(g_{53}) = 4$, $f(g_{32})= f(g_{43}) = 6$ and $f(g_{41})= f(g_{52}) = 13/3>4$.

The set $M$ of optimal lines is the line segment from $g_{53}$ to $g_{31}$ in Figure \ref{fig:algl1m5def}. 
The elements of $M$ correspond to the functions $y(x)= ax$ with $-1/2 \leq a \leq 1/2$.
\begin{figure}
\noindent
\begin{minipage}[t]{.49\linewidth}
  \mbox{\scalebox{.25}{\includegraphics{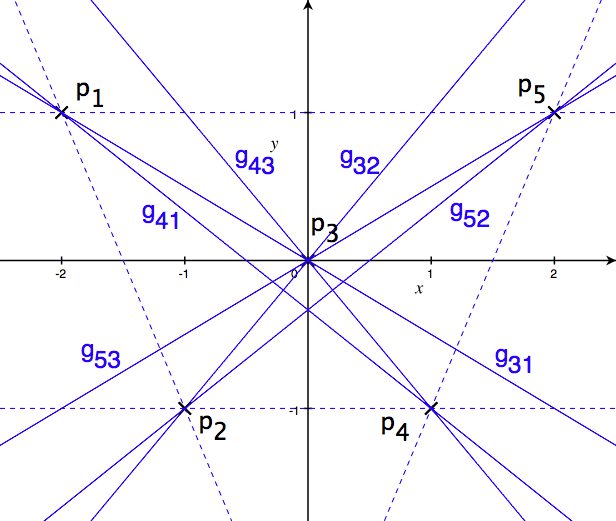}}}
\caption{}
\label{fig:algl1m5}
\end{minipage}
\begin{minipage}[t]{.50\linewidth}
  \mbox{\scalebox{.26}{\includegraphics{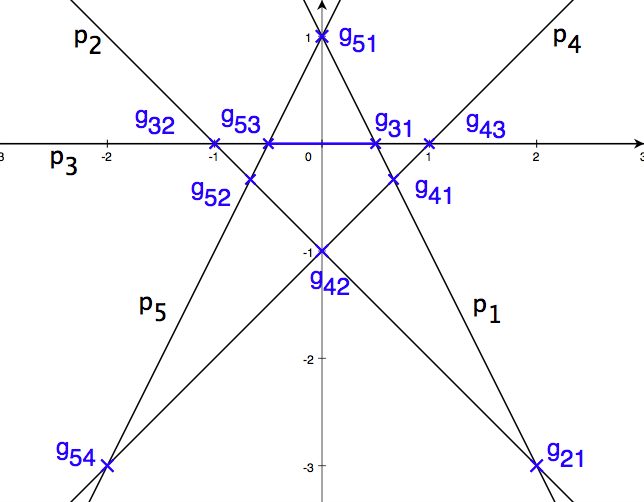}}}
\caption{}
\label{fig:algl1m5def}
\end{minipage}
\end{figure}
\subsection{Minimal geometric $L^1$-distance}
\label{subsec:geo-l1}
Given pairwise distinct points $p_j\in \R^2$, $j=1,\ldots,m$, we determine lines described as
$g = \{ q\in \R^2 : c=\langle q,n\rangle\}$, $c\in\R$, $n\in S^1$ with minimal geometric $L^1$-distance
to the point set $\{Êp_1,\ldots,p_m\}$, i.e., the minimum of the function
$f:\R\times S^1\to\R$ defined by
$$f(c,n)=\sum_{j=1}^m | c-\langle p_j,n\rangle| .$$
Similar to the definitions in subsection \ref{subsubsec:ZerlegungIndexmenge}
we decompose the index set.
For every line  $\{Êq\in\R^2: \langle q,n\rangle = c\}$ set
$J_+:=\{j: \langle p_j,n\rangle > c\}$, $J_0:=\{j: \langle p_j,n\rangle = c\}$ and
$J_-:=\{j: \langle p_j,n\rangle < c\}$ (see Figure \ref{fig:ZerlegGeo}).
Now
\begin{align*}
f(c,n) & = \sum_{j\in J_-} (c-\langle p_j,n\rangle) +\sum_{j\in J_+} (\langle p_j,n\rangle -c) \\
& = c(|J_-|-|J_+|) -  \sum_{j\in J_-}\langle p_j,n\rangle +\sum_{j\in J_+} \langle p_j,n\rangle.
\end{align*}
\subsubsection{Translation}
\label{subsubsec:Parallel-l1-geo}
For any $n\in S^1$ we restrict $f$ to lines with fixed normal vector $n$.
We want to determine the optimal line with normal vector $n$, i.e., $\min\{ f(c,n):c\in\R\}$.
Set $\gamma_j:=\langle p_j,n\rangle$ and let  $\sigma$ be a permutation satisfying 
$ \gamma_{\sigma(1)}\leq \ldots \leq \gamma_{\sigma(m)}$ (see Figure \ref{fig:ZerlegGeo}).
\begin{figure}
\noindent
\begin{minipage}[t]{.48\linewidth}
  \mbox{\scalebox{.26}{\includegraphics{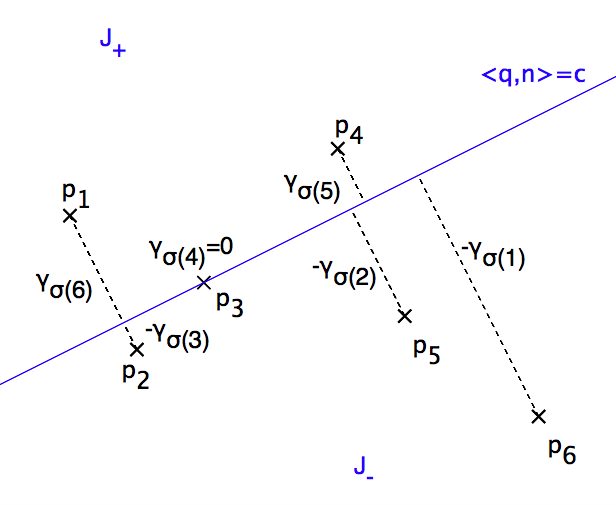}}}
\caption{geometric decomposition of the index set, definition of $\gamma_{\sigma(j)}$}
\label{fig:ZerlegGeo}
\end{minipage}
\hspace*{.02\linewidth}
\begin{minipage}[t]{.48\linewidth}
  \mbox{\scalebox{.26}{\includegraphics{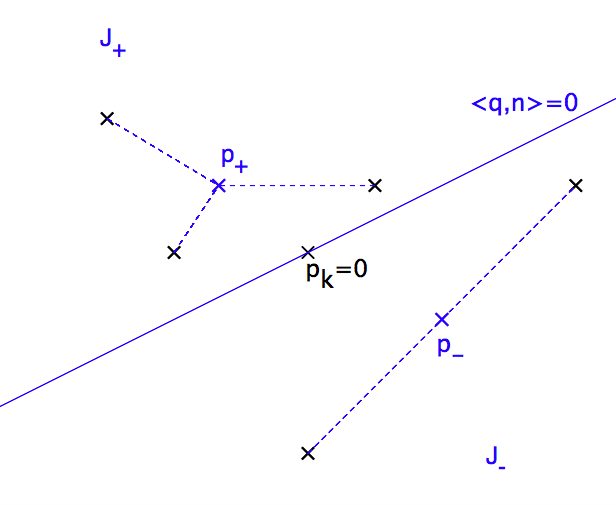}}}
\caption{definition of $p_+$ and $p_-$ for lines through  $p_k$}
\label{fig:DrehGeo}
\end{minipage}
\end{figure}

\begin{lemma}
For every $n\in S^1$ the function $f:\R\to\R$, $c\mapsto f(c,n)$, admits a global minimum. 
If $\min\{ f(c,n):c\in\R\} = f(c_0,n)$, then $\gamma_{\sigma(1)}\leq c_0\leq \gamma_{\sigma(m)}$.
\end{lemma}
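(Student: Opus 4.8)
The plan is to mirror exactly the structure of the proof of the analogous algebraic statement in subsection~\ref{subsubsec:Parallel-l1-alg}, since the function $c\mapsto f(c,n)$ has the same shape as $b\mapsto f(a,b)$: it is continuous, piecewise linear and convex in the single real variable $c$, with $\gamma_j=\langle p_j,n\rangle$ playing the role of $\beta_j=y_j-ax_j$. First I would examine the behavior of $f(\cdot,n)$ outside the interval $[\gamma_{\sigma(1)},\gamma_{\sigma(m)}]$. If $c>\gamma_{\sigma(m)}$, then every point satisfies $\langle p_j,n\rangle<c$, so $J_-=\{1,\ldots,m\}$ and $J_0=J_+=\emptyset$; the formula for $f(c,n)$ from the decomposition then reads
$$f(c,n)=mc-\sum_{j=1}^m\langle p_j,n\rangle,$$
which is strictly increasing in $c$. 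Symmetrically, if $c<\gamma_{\sigma(1)}$, then $J_+=\{1,\ldots,m\}$, $J_0=J_-=\emptyset$, and
$$f(c,n)=-mc+\sum_{j=1}^m\langle p_j,n\rangle,$$
which is strictly decreasing in $c$.

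Having pinned down the two unbounded branches, I would conclude as in the algebraic lemma: because $f(\cdot,n)$ is strictly decreasing on $(-\infty,\gamma_{\sigma(1)}]$ and strictly increasing on $[\gamma_{\sigma(m)},\infty)$, any infimum over $\R$ is attained on the compact interval $[\gamma_{\sigma(1)},\gamma_{\sigma(m)}]$. Continuity of $f$ on this compact interval then guarantees that the infimum is actually a minimum, and it necessarily lies in that interval, so
$$\inf_{c\in\R}f(c,n)=\inf_{\gamma_{\sigma(1)}\le c\le\gamma_{\sigma(m)}}f(c,n)=\min_{\gamma_{\sigma(1)}\le c\le\gamma_{\sigma(m)}}f(c,n),$$
and every minimizer $c_0$ satisfies $\gamma_{\sigma(1)}\le c_0\le\gamma_{\sigma(m)}$. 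That gives both assertions of the lemma.

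I do not expect any real obstacle here — the argument is a verbatim translation of the earlier algebraic case, with the only bookkeeping point being to check that the roles of $J_+$ and $J_-$ are swapped relative to the algebraic setting (here $\langle p_j,n\rangle>c$ puts $j$ in $J_+$, whereas there $y_j>y(x_j)$ did, and the sign conventions in the displayed formula for $f(c,n)$ must be used consistently). The one genuinely substantive input is simply that the normal vector $n$ is fixed, so $f(\cdot,n)$ really is a function of one variable; everything else is the standard "a coercive-in-both-directions piecewise linear function on $\R$ attains its minimum, and does so inside the bracketing interval of its breakpoints" observation, which also underlies Corollary~\ref{folg:alg-l1-Zerlegung-im-Optimum}.
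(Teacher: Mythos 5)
Your argument is correct and is essentially the paper's own proof: monotonicity of $f(\cdot,n)$ outside $[\gamma_{\sigma(1)},\gamma_{\sigma(m)}]$ (the paper phrases it via $f'(c)=m>0$ resp.\ $f'(c)=-m<0$, you via the explicit affine formulas, which is the same thing) plus continuity on the compact bracketing interval. Only your side remark that the roles of $J_+$ and $J_-$ are ``swapped'' relative to the algebraic case is off --- the decompositions are exactly parallel ($\beta_j>b$ versus $\gamma_j>c$) --- but this does not affect the proof.
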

\begin{proof}
The function $f:c\mapsto f(c,n)$ is continuous, piecewise linear and  convex.
If $J_0=\emptyset$, then $f$ is differentiable and $f'(c) = |J_-|-|J_+|$.

If $c>\gamma_{\sigma(m)}$, then $J_0=J_+=\emptyset$ and $f'(c)=m>0$.
If $c<\gamma_{\sigma(1)}$, then $J_0=J_-=\emptyset$ and $f'(c)=-m<0$.
Continuity of $f$ yields
$$\inf_{c\in\R} f(c) = \inf\{ f(c) : \gamma_{\sigma(1)}\leq c\leq \gamma_{\sigma(m)}\} 
	=   \min\{ f(c) : \gamma_{\sigma(1)}\leq c\leq \gamma_{\sigma(m)}\}.$$
\end{proof}
\begin{lemma}
For every $n\in S^1$ the value $f(c,n)$ is a local minimum of the function $f(\cdot,n)$ if and only if
$|J_-|+|J_0|- |J_+|\geq 0$ and $|J_+|+|J_0|- |J_-|\geq 0$.
A local minimum is strict if and only if $|J_-|+|J_0|- |J_+| > 0$ and $|J_+|+|J_0|- |J_-| > 0$.
\end{lemma}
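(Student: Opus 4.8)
The statement to prove is the analogue, for the geometric $L^1$-distance with fixed normal vector $n$, of the corresponding lemma for the algebraic $L^1$-distance (the second Lemma in subsection \ref{subsubsec:Parallel-l1-alg}). The strategy is therefore identical: compute $f(c+\varepsilon,n) - f(c,n)$ for $\varepsilon$ small enough that the index-set decomposition $J_+$, $J_0$, $J_-$ is preserved for strictly positive $\varepsilon$ and for strictly negative $\varepsilon$ separately, and read off the one-sided difference quotients. Since $f(\cdot,n)$ is continuous, piecewise linear and convex (as already noted in the proof of the preceding lemma), a point $c$ is a local — hence global — minimum if and only if the right derivative is $\geq 0$ and the left derivative is $\leq 0$, and the minimum is strict exactly when these inequalities are strict.

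\textbf{Step 1: choice of $\varepsilon$.} Pick $\varepsilon$ with $\max\{\gamma_j - c : j\in J_-\} < \varepsilon < \min\{\gamma_j - c : j\in J_+\}$ for the right-hand estimate (and the mirror condition for the left). For such $\varepsilon>0$ the points of $J_+$ stay above the line $\langle q,n\rangle = c+\varepsilon$, those of $J_-$ stay below, and the points of $J_0$ move strictly below it, contributing $|\varepsilon| = \varepsilon$ each. Using the formula
$$f(c,n) = \sum_{j\in J_-}(c-\gamma_j) + \sum_{j\in J_+}(\gamma_j-c)$$
derived just above, one gets
$$f(c+\varepsilon,n) = f(c,n) + \varepsilon\big(|J_-| + |J_0| - |J_+|\big)\quad\text{for }\varepsilon>0,$$
and symmetrically $f(c+\varepsilon,n) = f(c,n) + |\varepsilon|\big(|J_+| + |J_0| - |J_-|\big)$ for $\varepsilon<0$, i.e.\ the left difference quotient equals $-(|J_+|+|J_0|-|J_-|)$.

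\textbf{Step 2: conclude.} Convexity of $f(\cdot,n)$ means $c$ is a local (equivalently global) minimum iff the right derivative $|J_-|+|J_0|-|J_+|$ is nonnegative and the left derivative $-(|J_+|+|J_0|-|J_-|)$ is nonpositive, i.e.\ iff both $|J_-|+|J_0|-|J_+|\geq 0$ and $|J_+|+|J_0|-|J_-|\geq 0$; and the minimum is strict iff both quantities are strictly positive, because then $f$ is strictly increasing to the right and strictly decreasing to the left of $c$.

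\textbf{Main obstacle.} There is no real obstacle; the only thing to be careful about is the bookkeeping for the points of $J_0$, which on perturbation all move to the \emph{same} side (below, for $\varepsilon>0$) and each contribute $+|\varepsilon|$ to $f$ regardless of the sign of $\varepsilon$ — this is exactly what makes the $|J_0|$-term appear with a $+$ sign in both one-sided derivatives, and it is the phenomenon responsible for the convex kink at $c$. One should also note that when $J_0=\emptyset$ the two conditions collapse to $f'(c)=|J_-|-|J_+|=0$, recovering the differentiable case mentioned in the previous proof.

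\begin{proof}
The function $f(\cdot,n)$ is continuous, piecewise linear and convex, and it is differentiable at $c$ whenever $J_0 = \emptyset$. Let $J_+$, $J_0$, $J_-$ be the decomposition of the index set associated with the line $\langle q,n\rangle = c$. If
$$\max\{\gamma_j - c : j\in J_-\} < \varepsilon < \min\{\gamma_j - c : j\in J_+\},$$
then for $\varepsilon > 0$ the indices in $J_+$ remain in the upper half-plane of $\langle q,n\rangle = c+\varepsilon$ and those in $J_-$ in the lower one, while each index in $J_0$ now satisfies $\langle p_j,n\rangle = c < c+\varepsilon$ and contributes $|\varepsilon|$ to the value of $f$. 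Hence
\begin{align*}
f(c+\varepsilon,n) & = \sum_{j\in J_-}\big(c+\varepsilon-\gamma_j\big) + \sum_{j\in J_+}\big(\gamma_j - c - \varepsilon\big) + \sum_{j\in J_0}|\varepsilon| \\
& = f(c,n) + |\varepsilon|\cdot
\begin{cases} |J_-| + |J_0| - |J_+| & \text{if } \varepsilon > 0, \\ |J_+| + |J_0| - |J_-| & \text{if } \varepsilon < 0. \end{cases}
\end{align*}
Thus the right derivative of $f(\cdot,n)$ at $c$ equals $|J_-|+|J_0|-|J_+|$ and the left derivative equals $-(|J_+|+|J_0|-|J_-|)$. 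By convexity, $f(c,n)$ is a local minimum — equivalently, by convexity, a global minimum — if and only if the right derivative is $\geq 0$ and the left derivative is $\leq 0$, that is, if and only if $|J_-|+|J_0|-|J_+|\geq 0$ and $|J_+|+|J_0|-|J_-|\geq 0$. The minimum is strict if and only if both of these quantities are strictly positive, since then $f(\cdot,n)$ is strictly decreasing on the left of $c$ and strictly increasing on the right.
\end{proof}
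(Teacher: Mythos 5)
Your proposal is correct and follows essentially the same route as the paper: perturb $c$ by a small $\varepsilon$ that keeps the indices of $J_+$ and $J_-$ on their respective sides, account for the $J_0$ points contributing $|\varepsilon|$ each, and read off $f(c+\varepsilon,n)-f(c,n)$ separately for $\varepsilon>0$ and $\varepsilon<0$. Your added discussion of one-sided derivatives and convexity only makes explicit the conclusion the paper leaves implicit after the same computation.
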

\begin{proof}
Let $J_+$, $J_0$, $J_-$ be the decomposition of the index set for the line $\langle q,n\rangle = c$.
If $\max \{ \gamma_j-c : j\in J_-\} < \varepsilon < \min\{Ê\gamma_j-c: j\in J_+\} $, then the decomposition
of the index set for the line $\langle q,n\rangle = c+\varepsilon$ is equal to that for 
$\langle q,n\rangle = c$ and
\begin{align*}
f(c+\varepsilon,n) & = (|J_-|-|J_+|)(c+\varepsilon)  -  \sum_{j\in J_-}\langle p_j,n\rangle 
	+\sum_{j\in J_+} \langle p_j,n\rangle. +\sum_{j\in J_0} |\varepsilon|  \\
& = f(c,n) +\varepsilon \cdot  \begin{cases} |J_-|+|J_0|- |J_+| & \text{if } \varepsilon >0 \\
	|J_-|-|J_0|- |J_+| & \text{if } \varepsilon <0 \end{cases}.
\end{align*}
\end{proof}
\begin{cor}
\label{folg:geo-l1-Zerlegung-im-Optimum}
For every $n\in S^1$ the value $f(c,n)$ is a global minimum of the function $f(\cdot,n)$ if and only if
$| |J_+| - |J_-| | \leq |J_0|$.\\
Furthermore, $\min\{ f(c):c\in\R\} = f(c_0)$ if and only if $\gamma_k \leq c_0 \leq \gamma_l$ with
$$k=\left\lfloor \frac{m+1}{2}\right\rfloor,\, l = \left\lceil \frac{m+1}{2}\right\rceil.$$
\end{cor}
\begin{proof}
Replace $\beta$ by $\gamma$ in the proof of Corollary \ref{folg:alg-l1-Zerlegung-im-Optimum}.
\end{proof}
\begin{cor}
\label{folg:geo-l1-Verschiebung-in-Punkt}
Among all optimal lines with fixed normal vector $n$ 
there exists at least one containing one of the points $p_j$.
\end{cor}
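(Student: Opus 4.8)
The plan is to mirror the argument used for Corollary~\ref{folg:alg-l1-Verschiebung-in-Punkt} in the algebraic setting, now invoking the geometric translation analysis just carried out. First I would fix the normal vector $n\in S^1$ and recall from Corollary~\ref{folg:geo-l1-Zerlegung-im-Optimum} that the set of offsets $c$ for which $f(c,n)$ is minimal among all lines with that normal vector is exactly the closed interval $[\gamma_k,\gamma_l]$, where $k=\lfloor (m+1)/2\rfloor$, $l=\lceil (m+1)/2\rceil$, and $\gamma_{\sigma(1)}\le\dots\le\gamma_{\sigma(m)}$ is the sorted list of the values $\gamma_j=\langle p_j,n\rangle$. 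In particular this interval is nonempty, so an optimal line with normal vector $n$ exists.

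Next I would single out the endpoint $c_0=\gamma_k$ (the choice $c_0=\gamma_l$ works equally well). By the very definition of $\gamma_{\sigma(k)}$ one has $\langle p_{\sigma(k)},n\rangle = \gamma_{\sigma(k)} = c_0$, so $\sigma(k)\in J_0$ for the line $\{q\in\R^2:\langle q,n\rangle = c_0\}$; equivalently, this line passes through the point $p_{\sigma(k)}$. Since $c_0$ lies in $[\gamma_k,\gamma_l]$, this line is optimal among all lines with normal vector $n$, which is exactly the assertion.

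There is essentially no obstacle: the statement is an immediate consequence of Corollary~\ref{folg:geo-l1-Zerlegung-im-Optimum}, whose proof itself reduces to the algebraic case by replacing $\beta$ with $\gamma$. The only point worth a moment's care is that the index $k$ be a legitimate index in $\{1,\dots,m\}$ — which it is, since $1\le\lfloor(m+1)/2\rfloor\le m$ for $m\ge 2$ — so that $p_{\sigma(k)}$ is indeed one of the given points and the line produced genuinely contains one of $p_1,\dots,p_m$.
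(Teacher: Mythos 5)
Your proposal is correct and is essentially the paper's own argument: the paper also takes the endpoint $c=\gamma_{\sigma(k)}$ (or $\gamma_{\sigma(l)}$) of the optimal interval from Corollary \ref{folg:geo-l1-Zerlegung-im-Optimum} and observes that there $J_0\ne\emptyset$, i.e., the line contains one of the points $p_j$. Your write-up just spells out the identification of that point as $p_{\sigma(k)}$, which the paper leaves implicit.
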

\begin{proof}
If $b=\gamma_k$ and $b=\gamma_l$, then $J_0\ne \emptyset$.
\end{proof}
\begin{cor}
The function $f:\R\times S^1\to\R^{\geq 0}$ defined by $(c,n)\mapsto f(c,n)$ admits a global minimum. 
\end{cor}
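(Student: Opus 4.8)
The plan is to reduce the two-variable minimization over $\R\times S^1$ to a one-variable minimization over the compact circle $S^1$, using the facts already established for a fixed normal vector. Define $\phi:S^1\to\R^{\geq 0}$ by $\phi(n):=\min\{f(c,n):c\in\R\}$; this minimum exists for every $n$ by the first lemma of this subsection, so $\phi$ is well defined, and obviously $\inf_{(c,n)\in\R\times S^1}f(c,n)=\inf_{n\in S^1}\phi(n)$. Hence it suffices to show that $\phi$ attains a minimum on the compact set $S^1$.

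First I would express $\phi$ as a pointwise minimum of finitely many manifestly continuous functions. For each index $k$ put $g_k(n):=f(\langle p_k,n\rangle,n)=\sum_{j=1}^m|\langle p_k-p_j,n\rangle|$; each $g_k$ is continuous on $S^1$, being a finite sum of compositions of a linear form with the absolute value. Since $g_k(n)$ is the value of $f(\cdot,n)$ at the line through $p_k$ with normal vector $n$, we have $\phi(n)\leq g_k(n)$ for all $k$ and all $n$; and Corollary \ref{folg:geo-l1-Verschiebung-in-Punkt} guarantees that for every $n$ some optimal line with normal vector $n$ passes through one of the points $p_k$, i.e.\ $\phi(n)=g_k(n)$ for at least one index $k$. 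Therefore $\phi(n)=\min_{1\leq k\leq m}g_k(n)$.

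Then $\phi$, being the pointwise minimum of finitely many continuous functions on the compact set $S^1$, is continuous and thus attains a global minimum at some $n^\ast\in S^1$. Picking an index $k^\ast$ with $\phi(n^\ast)=g_{k^\ast}(n^\ast)$ and setting $c^\ast:=\langle p_{k^\ast},n^\ast\rangle$, the pair $(c^\ast,n^\ast)$ realizes $\inf_{n\in S^1}\phi(n)=\inf_{(c,n)}f(c,n)$, so $f$ attains its global minimum there.

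As for the main obstacle: there is no deep difficulty, since the whole argument is a compactness argument once the variable $c$ has been eliminated. The only point requiring a little care is the continuity of $\phi$, and the cleanest route to it is exactly the identification $\phi=\min_{k}g_k$ via Corollary \ref{folg:geo-l1-Verschiebung-in-Punkt}; a direct argument — taking the minimum of the jointly continuous $f$ over the fixed compact interval $c\in[-R,R]$ with $R=\max_j\|p_j\|$ and invoking uniform continuity on $[-R,R]\times S^1$ — also works but is slightly more cumbersome.
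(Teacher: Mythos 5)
Your proposal is correct and is essentially the paper's own argument: the paper likewise reduces the infimum over $(c,n)$ to $\min\{f(\langle p_j,n\rangle,n): n\in S^1,\ j=1,\ldots,m\}$ via Corollary \ref{folg:geo-l1-Verschiebung-in-Punkt} and then invokes continuity of $f$ and compactness of $S^1$. Your write-up merely makes explicit (through the function $\phi=\min_k g_k$) the continuity point that the paper leaves implicit.
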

\begin{proof}
Since the function $f$ is continuous and the set $S^1$ is compact, we obtain
$\inf\{ f(c,n) : c\in\R, n\in S^1\} = \min \{  f(\langle p_j,n\rangle,n) : n\in S^1, j=1,\ldots,m\}$.
\end{proof}
\subsubsection{Rotation}
For every point $p_k$ we restrict $f$ to lines containing $p_k$.
The normal vector of the lines is variable, but the condition $p_k\in \{ q\in \R^2 : c=\langle q,n\rangle\}$ 
implies $c=\langle p_k,n\rangle$.
We investigate the function $f:S^1\to\R$ given by
$$f(n):=f(\langle p_k,n\rangle, n) = \sum_{j=1}^{m} |\langle p_j,n\rangle-\langle p_k,n\rangle|
	= \sum_{j=1}^{m} |\langle p_j-p_k,n\rangle|= \sum_{j\ne k} |\langle \tilde{p}_j,n\rangle|,$$
where $\tilde{p}_j:=p_j-p_k \ne 0$ for all $j\ne k$.

As before, the change of coordinates $p_j\mapsto \tilde{p}_j$ corresponds to the translation of the origin to $p_k$.
For any normal vector $n\in S^1$ the decomposition of the index set with respect to the new coordinates 
is given by $J_+ = \{ \langle \tilde{p_j},n\rangle >0 \}$, $J_0 = \{ \langle \tilde{p_j},n\rangle = 0 \} \ni k$ 
and $J_- = \{ \langle \tilde{p_j},n\rangle <0 \}$ (see Figure \ref{fig:DrehGeo}). Now
$$f(n) = \sum_{j\in J_+} \langle \tilde{p}_j,n\rangle - \sum_{j\in J_-} \langle \tilde{p}_j,n\rangle
	=  \langle \tilde{p},n\rangle \text{ where } 
	\tilde{p} := \sum_{j\in J_+} \tilde{p}_j - \sum_{j\in J_-} \tilde{p}_j.$$
\begin{lemma}
\label{lem:geo-l1-Drehung-in-zwei-Punkte}
If $\langle \tilde{p}_j,n_0\rangle \ne 0$ for all $j\ne k$, then $\min\{ f(n):n\in S^1\} < f(n_0)$.
\end{lemma}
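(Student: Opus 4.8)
The plan is to use the hypothesis $\langle \tilde{p}_j,n_0\rangle\ne 0$ for all $j\ne k$ to make the index decomposition attached to $n_0$ \emph{locally constant}, so that $f$ agrees near $n_0$ with a single fixed linear functional on the circle, which then cannot be locally minimal at $n_0$. Concretely, since each map $n\mapsto\langle\tilde{p}_j,n\rangle$ is continuous, there is an open arc $U\subseteq S^1$ containing $n_0$ on which every $\langle\tilde{p}_j,n\rangle$ ($j\ne k$) keeps the same nonzero sign it has at $n_0$. Hence for all $n\in U$ the decomposition is the same as at $n_0$: $J_0=\{k\}$, and $J_+$, $J_-$ are exactly the sets $J_+^{(0)}:=\{j:\langle\tilde{p}_j,n_0\rangle>0\}$ and $J_-^{(0)}:=\{j:\langle\tilde{p}_j,n_0\rangle<0\}$. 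Therefore the vector $\tilde{p}=\sum_{j\in J_+^{(0)}}\tilde{p}_j-\sum_{j\in J_-^{(0)}}\tilde{p}_j$ is the same for every $n\in U$, and $f(n)=\langle\tilde{p},n\rangle$ on all of $U$.

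Next I would record two elementary facts. First, $f(n_0)=\sum_{j\ne k}|\langle\tilde{p}_j,n_0\rangle|>0$: the points are pairwise distinct so $\tilde{p}_j=p_j-p_k\ne 0$, the hypothesis gives $\langle\tilde{p}_j,n_0\rangle\ne 0$, and $m\ge 2$ guarantees at least one index $j\ne k$. Consequently $\tilde{p}\ne 0$ and $\langle\tilde{p},n_0\rangle=f(n_0)>0$. Second, parametrising $S^1$ by $n(t)=(\cos t,\sin t)^T$, the function $t\mapsto\langle\tilde{p},n(t)\rangle$ equals $\|\tilde{p}\|\cos(t-t^\ast)$ for a suitable $t^\ast$; such a function attains a local minimum only at its unique global minimum, where the cosine equals $-1$. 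Since $\langle\tilde{p},n_0\rangle>0$, the value of this cosine at the parameter of $n_0$ is strictly positive, hence $\ne -1$, so $n_0$ is not a local minimum of $n\mapsto\langle\tilde{p},n\rangle$.

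Combining the two: there exists $n_1\in U$ with $\langle\tilde{p},n_1\rangle<\langle\tilde{p},n_0\rangle$. Because $n_1\in U$ we have $f(n_1)=\langle\tilde{p},n_1\rangle$, and therefore
$$\min\{f(n):n\in S^1\}\le f(n_1)=\langle\tilde{p},n_1\rangle<\langle\tilde{p},n_0\rangle=f(n_0),$$
which is the assertion.

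I do not expect a genuine obstacle here; the one point that needs care is verifying that the decomposition is constant on an entire neighbourhood of $n_0$ (not merely that $J_0=\{k\}$ there), which is exactly what turns $f$ into a single linear functional near $n_0$ — after that, the claim is just the observation that a nonzero linear functional on the circle decreases in some direction away from any point that is not its minimizer. One can even shorten the last step by noting that the global minimum of $n\mapsto\langle\tilde{p},n\rangle$ on $S^1$ is $-\|\tilde{p}\|<0<f(n_0)$, so $n_0$ is not its minimizer, and a $\cos$-type function must then strictly decrease along $S^1$ in at least one direction from $n_0$, staying inside $U$ for a small enough displacement.
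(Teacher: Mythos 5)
Your proof is correct. It rests on the same key object as the paper's proof, namely the fixed vector $\tilde{p}=\sum_{j\in J_+}\tilde{p}_j-\sum_{j\in J_-}\tilde{p}_j$ with $\langle\tilde{p},n_0\rangle=f(n_0)>0$, but the execution is genuinely different: the paper parametrizes $n(t)=(\cos t,\sin t)^T$, computes $h'(t)=\langle\tilde{p},Jn\rangle$ and $h''(t)=-\langle\tilde{p},n\rangle$, and derives a contradiction from the assumption that $n_0$ is a critical local minimum (which would force $\tilde{p}=\lambda n_0$ with $\lambda<0$, incompatible with $\langle\tilde{p},n_0\rangle>0$), whereas you avoid differentiation altogether by first making explicit that the index decomposition is constant on a whole arc $U\ni n_0$, so that $f$ coincides on $U$ with the single functional $n\mapsto\langle\tilde{p},n\rangle=\|\tilde{p}\|\cos(t-t^\ast)$, which can only have a local minimum where its value is $-\|\tilde{p}\|<0$. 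Your route has the merit of spelling out the local-constancy step that the paper's claim ``$f$ is differentiable at the corresponding point when $J_0=\{k\}$'' implicitly relies on, and it needs nothing beyond continuity and elementary properties of the cosine; the paper's calculus formulation, on the other hand, is the one that generalizes directly to the rotation arguments reused for the geometric $L^\infty$- and $L^p$-distances, where the same $h'$, $h''$ computations reappear. Either way the conclusion $\min_{S^1}f\le f(n_1)<f(n_0)$ is sound, the existence of the minimum being guaranteed by compactness of $S^1$.
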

\begin{proof}
Note that $\tilde{p}_k=0$. Hence, $|J_0|\geq 1$.
If $J_+\cup J_- \ne \emptyset$, i.e., $\langle \tilde{p}_j,n\rangle \ne 0$ for at least one index $j\ne k$, 
then $f(n)=\langle \tilde{p},n\rangle >0$. Thus, $\tilde{p}\ne 0$.

Using the parametrization $n(t) = (\cos t, \sin t)^T$ of $S^1$ 
as in subsection \ref{subsubsec:geo-l2-Drehung} we define the function $h(t):=f(n(t))$.
If $\langle p_j,n\rangle \ne 0$ for all $j\ne k$, i.e., $J_0=\{Êk\}$, 
then $f$ is differentiable at the corresponding point.
Now
$$ \frac{d h}{d t} = \tilde{p}^T Jn = \langle \tilde{p},Jn\rangle, 
	\quad h''(t) = \tilde{p}^T JJn(t) = -\tilde{p}^T n(t) = \langle -\tilde{p},n(t)\rangle.$$ 
If $f(n(t))$ is a local extremum of $f$ and $J_0=\{ k\}$, then there exists $\lambda\ne 0$ such that 
$\tilde{p}=\lambda n$, since $h'(t)=0$ an $Jn \perp n$.
A local extremum $f(n(t)))$ of $f$ satisfying $J_0=\{ k\}$ is a local minimum if and only if $\lambda < 0$,
since $h''(t) = -\lambda\langle n,n\rangle = -\lambda$.
Set $\tilde{p}_+:=\sum_{j\in J_+}p_j$ and $\tilde{p}_-:=\sum_{j\in J_-}p_j$.
Note that $\tilde{p}_+ - \tilde{p}_- = \tilde{p}$, $\langle \tilde{p}_+,n\rangle >0$ and 
$\langle \tilde{p}_-,n\rangle <0$.
This implies $\langle \tilde{p}_+,\lambda n\rangle <0$, $\langle \tilde{p}_-,\lambda n\rangle >0$.
Thus, $\langle \tilde{p}_+ - \tilde{p}_-,\lambda n\rangle <0$
contradicting
$\langle \tilde{p}_+ - \tilde{p}_-,\lambda n\rangle = \langle \tilde{p},\tilde{p}\rangle =\| \tilde{p}\|^2 > 0$.
\end{proof}
\subsubsection{Summary of the geometric $L^1$-distance}
Corollary \ref{folg:geo-l1-Zerlegung-im-Optimum} and 
Lemma \ref{lem:geo-l1-Drehung-in-zwei-Punkte} imply the existence of a line containing 
two of the points $p_j$ among all lines with minimal geometric
$L^1$-distance to the set $\{ p_1,\ldots,p_m\}$.
These are at most $m(m-1)/2$ lines.
As in section \ref{sec:Totaler-algebraischer-Abstand}, it is sufficient to check the lines
which additionally satisfy $| |J_+| - |J_-| | \leq |J_0|$.

As before, we denote the line through $p_j$ and $p_k$ with $j > k$ by $g_{jk}$.
The normal vector of $g_{jk}$ is $n_{jk}:=J(p_j-p_k)\| p_j-p_k\|^{-1}$.
The line $g_{jk}$ is given by the equation $\langle q, n_{jk}\rangle = \langle p_k, n_{jk}\rangle =:c_{jk}$.
Let $E$ be the set of points in the domain  $f$ corresponding to optimal lines. More precisely,
\begin{equation}
E :=\left\{Ê\pm (c_{jk},n_{jk}) : f(c_{jk},n_{jk}) = \min\{ f(c,n) : c\in\R, n\in S^1\} \right\}.
\end{equation}
Since $f:\R\times S^1\to\R$ is only convex with respect to $c$, 
we perform the convex hull only in one direction and define
\begin{equation}
\bar{E} := \{ (c,n) \in \R\times S^1 : \exists  t\in[0,1], (c_0,n), (c_1,n)\in E : c=tc_0+(1-t)c_1\}.
\end{equation}
\begin{theorem}
\label{satz:geo-L1-Optimenge}
Let $p_j\in\R^2$, $j=1,\ldots,m$, be pairwise distinct points. \\
The line defined by the equation $\langle q,n\rangle = c$ with $n\inÊS^1$ and $c\in \R$ has
minimal geometric $L^1$-distance to the set $\{ p_1,\ldots,p_m\}$ if and only if  $(c,n)\in \bar{E}$.
\end{theorem}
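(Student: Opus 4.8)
The plan is to mirror the structure of the proof of Theorem~\ref{satz:alg-L1-Optimenge}, adapting it to the geometric setting where $f$ is convex only in the variable $c$ and the angular variable lives on the compact set $S^1$. First I would establish the easy inclusion: if $(c,n)\in\bar E$, then by definition there are $(c_0,n),(c_1,n)\in E$ with $c=tc_0+(1-t)c_1$ for some $t\in[0,1]$. Since both $(c_0,n)$ and $(c_1,n)$ correspond to globally optimal lines and $f(\cdot,n)$ is convex (Corollary~\ref{folg:geo-l1-Zerlegung-im-Optimum} and the lemmas preceding it), the value $f(c,n)$ lies between $f(c_0,n)$ and $f(c_1,n)$, hence equals the global minimum; so the line $\langle q,n\rangle=c$ is optimal.

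For the converse, suppose the line $\ell$ given by $\langle q,n\rangle=c$ has minimal geometric $L^1$-distance. I would split into two cases according to how many points lie on $\ell$. If $\ell$ contains exactly one point $p_k$ (i.e.\ $J_0=\{k\}$), then $f(n)$ (in the rotation parametrization about $p_k$) is locally minimal and differentiable at $n$, so by Lemma~\ref{lem:geo-l1-Drehung-in-zwei-Punkte} this cannot be a global minimum unless in fact some neighbouring rotation sharing the same value exists — more precisely, since $h(t)=f(n(t))$ is piecewise linear, a differentiable global minimum forces $h$ to be constant on an interval whose endpoints are angles at which $\ell$ passes through a second point $p_j$; this produces two optimal lines $g_{jk}$ through $p_k$, both in $E$, and $\ell$'s normal direction $n$ lies in the arc between $n_{jk}$ for those two indices. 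One then still needs the translation step: among lines with this fixed $n$, Corollary~\ref{folg:geo-l1-Verschiebung-in-Punkt} and Corollary~\ref{folg:geo-l1-Zerlegung-im-Optimum} place $c$ between the $\gamma$-values of two points, giving $(c,n)$ as a convex combination in the $c$-direction of two points of $E$, i.e.\ $(c,n)\in\bar E$. If $\ell$ contains no point at all, the argument is purely the $c$-direction one: Corollary~\ref{folg:geo-l1-Zerlegung-im-Optimum} yields indices $k,l$ with $f(c,n)=f(c',n)$ for all $c'$ between $\gamma_k=\langle p_k,n\rangle$ and $\gamma_l=\langle p_l,n\rangle$; the lines at those two endpoints pass through $p_k$ resp.\ $p_l$, reducing to the one-point case already handled, so both endpoints lie in $\bar E$ and therefore so does $(c,n)$. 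If $\ell$ already contains two points, it is some $g_{jk}\in E\subset\bar E$ and there is nothing to prove.

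The main obstacle I anticipate is the differentiable-global-minimum step in the rotation case: Lemma~\ref{lem:geo-l1-Drehung-in-zwei-Punkte} only says a strictly interior smooth critical point is never a global minimum, so to conclude I must use that $h(t)$ is piecewise linear on $S^1$ and that a smooth point which is a global minimum must sit on a flat segment whose boundary consists of angles where $J_0$ grows; there one picks up the two indices $j_1,j_2$ with $f(g_{j_1k})=f(g_{j_2k})=\min f$. Care is also needed at the boundary of the flat segment to confirm the second point genuinely lies on the line (so that $g_{j_ik}$ is defined) rather than merely that the derivative jumps. Once these two indices are in hand, combining the rotation conclusion with the translation corollaries to realize $(c,n)$ as an element of $\bar E$ is routine, exactly as in the algebraic $L^1$ proof. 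I would finish by remarking, in parallel with the algebraic case, that $\bar E$ is a finite union of line segments (one per pair of normal directions $\pm n_{jk}$ that survive the optimality test), so $M$ is again computable in $O(m^2)$ steps.
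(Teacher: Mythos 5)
Your overall architecture (convexity in $c$ for the easy direction; translation plus rotation for the converse) matches the paper's, but the rotation step rests on a false structural claim. In the geometric setting the rotation function $h(t)=f(n(t))=\langle \tilde{p},n(t)\rangle$ is \emph{not} piecewise linear in $t$: on each arc where the decomposition $J_+,J_0,J_-$ is constant it is a sinusoid $\|\tilde{p}\|\cos(t-t_0)$, so it is never constant on an arc of smooth points and there are no ``flat segments'' whose endpoints you could harvest. The actual content of Lemma~\ref{lem:geo-l1-Drehung-in-zwei-Punkte} is stronger and simpler than the reading you use: at a smooth point ($J_0=\{k\}$) a critical point of $h$ forces $\tilde{p}=\lambda n$ with $h''=-\lambda$, and $\lambda<0$ is excluded by the $\tilde{p}_\pm$ computation, so every smooth critical point is a strict local maximum; consequently a globally optimal line can never contain exactly one of the points $p_j$. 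Your ``exactly one point'' case is therefore vacuous rather than a source of two optimal lines $g_{j_1k},g_{j_2k}$. Moreover, even if such lines existed, knowing that $n$ lies ``in the arc between'' their normals would not help, because $\bar{E}$ only convexifies in the $c$-direction at a \emph{fixed} normal: membership in $\bar{E}$ must be certified by two elements of $E$ with the same $n$, not by nearby normals.

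The point where the lemma is genuinely needed is exactly the step you leave unjustified: in the zero-point case you slide $c$ to the endpoints $\gamma_{\sigma(k)},\gamma_{\sigma(l)}$ of the optimal interval (Corollary~\ref{folg:geo-l1-Zerlegung-im-Optimum}); the endpoint lines are optimal and contain at least one point, but to place them in $E$ you must show they contain at least \emph{two} points, and that is precisely the ``no optimal line through exactly one point'' consequence of Lemma~\ref{lem:geo-l1-Drehung-in-zwei-Punkte}. With that fix the converse closes cleanly: an optimal $(c,n)$ with $|J_0|\geq 2$ lies in $E$ itself; otherwise translation places $c$ in an interval of equal, minimal values whose endpoint lines pass through at least one, hence at least two, of the points, so they provide the required pair of $E$-elements with normal $n$ and $(c,n)\in\bar{E}$. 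This is the argument the paper compresses into its citation of Corollaries~\ref{folg:geo-l1-Zerlegung-im-Optimum} and \ref{folg:geo-l1-Verschiebung-in-Punkt} together with the summary preceding the theorem.
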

\begin{proof}
The statement is a consequence of Corollaries \ref{folg:geo-l1-Zerlegung-im-Optimum} 
and \ref{folg:geo-l1-Verschiebung-in-Punkt}.
\end{proof}
\begin{rem}
As long as the set $E$ can be defined 
Theorem \ref{satz:geo-L1-Optimenge} remains true 
if the condition $p_j\ne p_k$ for all $j\ne k$ is omitted.
However, if $p_1=\ldots=p_m=\bar{p}$, then all lines containing $\bar{p}$ have zero distance to the
point set and are optimal. Note that only finitely many normal vectors occur in $E$ for a generic point set,
whereas optimal lines with any normal vector exist in the special case $p_1=\ldots=p_m=\bar{p}$.
\end{rem}
\subsubsection{Examples}
\label{subsubsec:Beispiele-geo-l1}
\paragraph{Three points:} Three pairwise distinct points $p_1$, $p_2$, $p_3$ form a triangle $D$. 
Let $A$ be the area of $D$. If $j>k$ and $l\ne j,k$, then $f(g_{jk}) = d(p_l,g_{jk})$.
Since $2A=\| p_j-p_k\| f(g_{jk})$ for all $j\ne k$, the line $g_{jk}$ is an optimal line if and only if 
$g_{jk}$ contains the longest edge of the triangle $D$.
If there are two longest edges ($\Z_2$-symmetry) or 
if $D$ is an equilateral triangle ($\mathfrak{S}_3$-symmetry),
then there exist two respectively three geometric $L^1$-optimal lines. 
\paragraph{Four points without symmetry admitting two optimal lines:}
\begin{wrapfigure}{r}{5,5cm}
  \mbox{\scalebox{.25}{\includegraphics{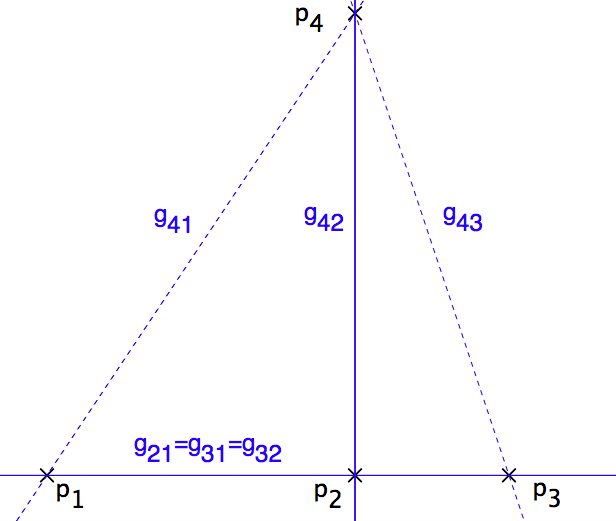}}}
\caption{}
\label{fig:geol1m4}
\end{wrapfigure}
Let us consider the four points $p_1=(0,0)^T$, $p_2=(2,0)^T$, $p_3=(3,0)^T$, $p_4=(2,3)^T$
(see Figure  \ref{fig:geol1m4}). 
If the lines $g_{43}$ or $g_{41}$ were optimal, then there would exist an optimal line containing 
exactly one of the points $p_j$. This would contradict Lemma \ref{lem:geo-l1-Drehung-in-zwei-Punkte}, 
since translation into the direction of $p_2$ does not change the value of $f$. 

The normal vectors of the lines $g_{21}=g_{31}=g_{32}$ and $g_{42}$ are $n_{21}=(0,1)^T$ 
respectively $n_{42}=(1,0)^T$. Now $f(0,n_{21}) = 3 = f(2,n_{42})$.
Since $n_{21}\ne \pm n_{42}$ there exist exactly two lines with minimal geometric $L^1$-distance
to the set $\{p_1,\ldots,p_4\}$.
\paragraph{Invariance under reflection for $m=5$:}
Consider the points $p_1=(-2,1)^T$, $p_2=(-1,-1)^T$, $p_3=(0,0)^T$, $p_4=(1,-1)^T$ and 
$p_5=(2,1)^T$ as in subsection \ref{subsubsec:Beispiele-alg-l1} and Figure \ref{fig:algl1m5}.
The lines $g_{21}$, $g_{42}$, $g_{54}$ and $g_{51}$ do not fulfill the condition
$|J_0| \geq | |J_+|- |J_-| |$, since for those lines $|J_0| = 2 < 3 = | |J_+|- |J_-| |$. 

Now $n_{53}= (1,-2)^T/\sqrt{5}$, $n_{43} = (1,1)^T/\sqrt{2}$, $n_{52}= (2,-3)^T/\sqrt{13}$, $
c_{53}=c_{43} = 0$ and $c_{52} = 1/\sqrt{13}$.
Using the reflection symmetry we obtain 
\begin{align*}
f(0,n_{31}) & = f(0,n_{53})= (4+1+3)/\sqrt{5} = 8/\sqrt{5}, \\
f(0,n_{32}) & = f(0,n_{43})= (1+2+3)/\sqrt{2} = 6/\sqrt{2} >  8\sqrt{5}, \\
f(c_{41},n_{41}) = f(c_{52},n_{52}) &= (8+1+4)\sqrt{13} = \sqrt{13} >  8\sqrt{5}.
\end{align*}
The lines $g_{31}$ and $g_{53}$ are not parallel.
Hence, $E=\bar{E}$.
There are exactly two lines with minimal geometric $L^1$-distance to the set $\{ p_1,\ldots,p_5\}$.
These are $g_{31}$ and $g_{53}$.
\section{Maximal distance - $L^\infty$-norm}
\label{sec:MaxAbstand}
In this section we minimize the largest of the distances of the single points $p_j$.
We consider the continuous function $f(g)= \max \{Ê  d_j: j=1,\ldots,m\} $.
The function $f$ is differentiable if there is exactly one largest $d_j$.
We show the existence of the global minimum of $f$ for the algebraic and the geometric distance.
The set of optimal lines is finite in both cases, because optimal lines are located in a special manner 
between the edges and vertices of the convex polytope generated by  $p_1,\ldots,p_m$.
Using the vertical distance the function $f$ becomes convex and the global minimum is then attained
at a unique line.
\subsection{Minimal algebraic $L^\infty$-distance}
\label{sec:Maximaler-algebraischer-Abstand}
Given points $p_j=(x_j,y_j)^T\in \R^2$, $j=1,\ldots,m$, we determine the linear function
$y(x) = ax+b$, $a,b\in\R$, with minimal algebraic $L^\infty$-distance to the set $\{Êp_1,\ldots,p_m\}$, 
i.e., the minimum of the continuous, piecewise linear and convex function $f:\R^2\to\R$ defined by
$f(a,b)=\max\{ | y_j-(ax_j+b)| : j=1,\ldots,m\}$.

As in the sections \ref{sec:LineareRegression} and \ref{sec:Totaler-algebraischer-Abstand} we
additionally assume that $x_j\ne x_k$ for all $j\ne k$ and decompose the index set $\{p_1,\ldots,p_m\}$
into $J_+=\{Êj:y_j > y(x_j)\}$, $J_0=\{Êj:y_j = y(x_j)\}$ and $J_-=\{Êj:y_j < y(x_j)\}$.
Now
$$f(a,b)  = \max \left\{ 0, \max\{ y_j-ax_j-b : j\in J_+\}, \max\{ ax_j+b-y_j : j\in J_-\}\right\}.$$
\subsubsection{Translation}
As in section \ref{subsubsec:Parallel-l1-alg}, we restrict $f$ to lines with arbitrary fixed slope $a\in\R$.
Again, set $\beta_j:=y_j-ax_j$ and let $\sigma$ be a permutation satisfying
$\beta_{\sigma(1)} \leq \ldots \leq \beta_{\sigma(m)}$ (see Figure \ref{fig:ZerlegAlg}). 
\begin{lemma}
\label{lem:alg-Linfty-opti-b}
$\min\{ f(a,b) :b\in\R\} = f(a,b_0)$ if and only if
$b_0=(\beta_{\sigma(m)}+\beta_{\sigma_{\sigma(1)}})/2$.
\end{lemma}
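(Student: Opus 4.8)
The plan is to reduce the maximum over all $m$ indices to a maximum over just the two extreme ones, and then to minimize an explicit piecewise-affine function of the single variable $b$.

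First I would record that, with $\beta_j := y_j - a x_j$ as in the statement, the restriction of $f$ to lines of slope $a$ is $f(a,b) = \max_{j}|\beta_j - b|$. Since $\beta_{\sigma(1)} \le \beta_j \le \beta_{\sigma(m)}$ for every $j$, the number $\beta_j - b$ lies between $\beta_{\sigma(1)} - b$ and $\beta_{\sigma(m)} - b$, so $|\beta_j - b| \le \max\{|\beta_{\sigma(1)} - b|,\,|\beta_{\sigma(m)} - b|\}$. Hence
\[
f(a,b) = \max\{\,|\beta_{\sigma(1)} - b|,\ |\beta_{\sigma(m)} - b|\,\} = \max\{\,b - \beta_{\sigma(1)},\ \beta_{\sigma(m)} - b\,\},
\]
where the last equality uses $\beta_{\sigma(1)} \le \beta_{\sigma(m)}$, which forces $b - \beta_{\sigma(1)} \ge b - \beta_{\sigma(m)}$ and $\beta_{\sigma(m)} - b \ge \beta_{\sigma(1)} - b$, so the two terms listed dominate the other two absolute-value branches.

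Then I would analyze this function of $b$ directly. The map $b \mapsto b - \beta_{\sigma(1)}$ is strictly increasing, the map $b \mapsto \beta_{\sigma(m)} - b$ is strictly decreasing, and their graphs meet exactly at $b_0 = (\beta_{\sigma(1)} + \beta_{\sigma(m)})/2$, with common value $(\beta_{\sigma(m)} - \beta_{\sigma(1)})/2 \ge 0$. For $b < b_0$ the decreasing term is the larger one, so $f(a,\cdot)$ is strictly decreasing there; for $b > b_0$ the increasing term is the larger one, so $f(a,\cdot)$ is strictly increasing there. Therefore $f(a,\cdot)$ attains its minimum, and attains it at the single point $b_0$; the strict monotonicity on both sides simultaneously yields that $b_0$ is the unique minimizer, which is exactly the asserted equivalence.

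There is essentially no obstacle here: the one point that needs care is the reduction showing that only the extreme values $\beta_{\sigma(1)}$ and $\beta_{\sigma(m)}$ matter, after which everything is the elementary geometry of the intersection of an increasing and a decreasing affine function. I would also add a short remark that the argument nowhere uses $x_j \ne x_k$ — that hypothesis only guarantees that $y = ax+b$ is a faithful parametrization of the relevant lines — so Lemma~\ref{lem:alg-Linfty-opti-b} holds verbatim even when some of the $\beta_j$ (or the underlying $x_j$) coincide.
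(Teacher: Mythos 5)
Your proof is correct and follows essentially the same route as the paper: reduce $f(a,b)=\max_j|\beta_j-b|$ to $\max\{|\beta_{\sigma(1)}-b|,|\beta_{\sigma(m)}-b|\}$ and then minimize this function of $b$ alone. The only (cosmetic) difference is in the last step, where the paper uses the lower bound $(\beta_{\sigma(m)}-\beta_{\sigma(1)})/2$ attained exactly at the midpoint, while you argue via strict monotonicity of the two affine branches; both give the same conclusion, and your reduction to the two extreme indices is in fact spelled out more carefully than in the paper.
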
 
\begin{proof}
The assertion follows from
$$f(a,b) = \max_{j=1,\ldots,m}\{ |\beta_j-b| \} = \max\{ |\beta_{\sigma(m)}-b|, |\beta_{\sigma(1)}-b| \}
	\geq (\beta_{\sigma(m)}-\beta_{\sigma(1)})/2$$
and $\max\{ |\beta_{\sigma(m)}-b|, |\beta_{\sigma(1)}-b| \} =  (\beta_{\sigma(m)}-\beta_{\sigma(1)})/2
	\Leftrightarrow b=(\beta_{\sigma(m)}+\beta_{\sigma_{\sigma(1)}})/2$.
\end{proof}
\begin{cor}
\label{folg:alg-linfinity-Verschiebung-in-Mittelpunkt}
If $\min\{ f(a,b):a,b\in\R\} = f(a_0,b_0)$, then there exist indices  $k\ne l$ such that
$f(a_0,b_0) = \beta_l-b_0 = b_0 - \beta_k$ and the point $(p_k+p_l)/2$ lies on the line defined by 
$y=a_0x+b_0$.
\end{cor}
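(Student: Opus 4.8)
The plan is to combine Lemma \ref{lem:alg-Linfty-opti-b} with a minimization over the slope $a$. First I would invoke Lemma \ref{lem:alg-Linfty-opti-b}: since $(a_0,b_0)$ is a global minimizer of $f$, it is in particular optimal among lines of slope $a_0$, so $b_0 = (\beta_{\sigma(m)} + \beta_{\sigma(1)})/2$ where $\beta_j = y_j - a_0 x_j$ and $\sigma$ orders the $\beta_j$. Setting $k := \sigma(1)$ and $l := \sigma(m)$, the proof of that lemma already gives
$$f(a_0,b_0) = \frac{\beta_{\sigma(m)} - \beta_{\sigma(1)}}{2} = \beta_l - b_0 = b_0 - \beta_k,$$
which is the first asserted equality once we note $\beta_l - b_0 = y_l - a_0 x_l - b_0$ and $b_0 - \beta_k = a_0 x_k + b_0 - y_k$, i.e.\ $f(a_0,b_0)$ is attained at both $p_l \in J_+$ and $p_k \in J_-$.

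Next I would establish that the midpoint $(p_k + p_l)/2$ lies on the line $y = a_0 x + b_0$. This is a direct computation: the $y$-value of the line at $x = (x_k + x_l)/2$ is $a_0(x_k+x_l)/2 + b_0$, and I want this to equal $(y_k + y_l)/2$. From $\beta_l - b_0 = b_0 - \beta_k$ we get $\beta_k + \beta_l = 2 b_0$, i.e.\ $(y_k - a_0 x_k) + (y_l - a_0 x_l) = 2 b_0$, which rearranges exactly to $a_0(x_k + x_l)/2 + b_0 = (y_k + y_l)/2$. So the midpoint condition is literally equivalent to the two-sided equality $f(a_0,b_0) = \beta_l - b_0 = b_0 - \beta_k$ already in hand, and no further work is needed.

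The only genuine subtlety — and the step I would be most careful about — is the requirement $k \neq l$, i.e.\ that the $\beta_j$ are not all equal. If $\beta_{\sigma(1)} = \beta_{\sigma(m)}$ then all points lie on the line $y = a_0 x + b_0$ and $f(a_0,b_0) = 0$; this would force all $p_j$ to be collinear, but since the $x_j$ are pairwise distinct (the standing assumption of this subsection) this is not excluded a priori — however, in that degenerate case one may simply pick any two indices $k \neq l$, both in $J_0$, and the claimed equalities hold trivially with $f(a_0,b_0) = 0 = \beta_l - b_0 = b_0 - \beta_k$, and the midpoint of any two points on the line again lies on the line. So I would phrase the argument to cover both cases: if $\beta_{\sigma(1)} < \beta_{\sigma(m)}$, take $k = \sigma(1)$, $l = \sigma(m)$; otherwise take any two distinct indices. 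In all cases the conclusion follows, completing the proof.
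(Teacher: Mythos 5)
Your proposal is correct and follows essentially the same route as the paper: apply Lemma \ref{lem:alg-Linfty-opti-b} to the optimal slope $a_0$, take $k=\sigma(1)$ and $l=\sigma(m)$, and observe that $\beta_k+\beta_l=2b_0$ is exactly the midpoint condition. Your worry about $k\ne l$ is moot, since $\sigma$ is a permutation and $m\geq 2$, so $\sigma(1)$ and $\sigma(m)$ are automatically distinct indices even when $\beta_{\sigma(1)}=\beta_{\sigma(m)}$; the degenerate case needs no separate treatment.
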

\begin{proof}
The equation $f(a_0,b_0) = \beta_l-b_0 = b_0 - \beta_k$ holds for  $k=\sigma(1)$ and $l=\sigma(m)$.
It is easy to check that
$$y\left( \frac{x_k+x_l}{2}\right) = a_0\frac{x_k+x_l}{2}+b_0 
	= \frac{1}{2}(a_0x_k+a_0x_l+\beta_l+\beta_k) = \frac{y_k+y_l}{2}$$
for the function $y(x) = a_0x+b_0$.
\end{proof}
\subsubsection{Rotation}
\label{subsubsec:Drehung-linfty-alg} 
For any pair of indices $(k,l)$ with  $k \ne  l$ we consider the values of $f$ at lines containing
the point $(p_k+p_l)/2$.
The condition $y((x_k+x_l)/2) = (y_k+y_l)/2$ implies $2b=(y_k+y_l)-a(x_k+x_l)$.
We investigate the function$f:\R\to\R$ defined by
\begin{align*}
f(a) & := f(a,(y_k+y_l-a(x_k+x_l))/2) \\
& = \max\{ |y_j-ax_j-(y_k+y_l)/2+a(x_k+x_l)/2|: j=1,\ldots,m\} \\
& = \max\{ |\tilde{y}_j -a\tilde{x}_j| : j=1,\ldots,m\} 
\end{align*}
where $\tilde{x}_j := x_j-(x_k+x_l)/2$ and $\tilde{y}_j := y_j-(y_k+y_l)/2$.
Note that $\tilde{x}_k,\tilde{x}_lÊ\ne 0$, since the coordinates $x_j$ are pairwise distinct.
\begin{lemma}
\label{lem:alg-linfinity-Drehung-zu-Kantenparallele}
If $f(a_0) = \min\{ f(a) : a\inÊ\R\}$, $f(a_0) = \tilde{y}_l-a_0\tilde{x}_l = a_0\tilde{x}_k-\tilde{y}_k$ and
$f(a_0)>0$, then there exists an index $j\ne k,l$ such that  $f(a_0) = |\tilde{y}_j-a_0\tilde{x}_j|$.
\end{lemma}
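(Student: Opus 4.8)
The plan is to argue by contradiction: I assume that $f(a_0)=|\tilde{y}_j-a_0\tilde{x}_j|$ fails for every $j\ne k,l$, and show that then $a_0$ cannot be a minimizer of $f$.

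First I would extract the consequences of the midpoint normalization. Because the origin has been translated to $(p_k+p_l)/2$, one has $\tilde{x}_k=-\tilde{x}_l$ and $\tilde{y}_k=-\tilde{y}_l$, and both $\tilde{x}_k,\tilde{x}_l$ are nonzero since the $x$-coordinates are pairwise distinct. Hence $|\tilde{y}_k-a\tilde{x}_k|=|\tilde{y}_l-a\tilde{x}_l|$ for every $a\in\R$: the indices $k$ and $l$ always contribute the same term to the maximum defining $f$. The hypothesis $f(a_0)=\tilde{y}_l-a_0\tilde{x}_l>0$ says precisely that this common term is positive at $a_0$ and realizes the maximum. (This already forces $m\ge 3$: if $m=2$ then $f(a)=|\tilde{y}_l-a\tilde{x}_l|$ identically, which vanishes at $a=\tilde{y}_l/\tilde{x}_l$, contradicting $f(a_0)>0$.)

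Next I would localize. Suppose $|\tilde{y}_j-a_0\tilde{x}_j|<f(a_0)$ for all $j\ne k,l$. Since there are only finitely many such indices and each map $a\mapsto|\tilde{y}_j-a\tilde{x}_j|$ as well as $a\mapsto\tilde{y}_l-a\tilde{x}_l$ is continuous, there is $\varepsilon>0$ such that $|\tilde{y}_j-a\tilde{x}_j|<\tilde{y}_l-a\tilde{x}_l$ for all $j\ne k,l$ and all $a\in(a_0-\varepsilon,a_0+\varepsilon)$; since $\tilde{y}_l-a_0\tilde{x}_l>0$ I may also assume $\tilde{y}_l-a\tilde{x}_l>0$ on this interval. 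On that interval the maximum defining $f$ is attained by the (coinciding) $k$- and $l$-terms, so $f(a)=|\tilde{y}_l-a\tilde{x}_l|=\tilde{y}_l-a\tilde{x}_l$.

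Finally, $a\mapsto\tilde{y}_l-a\tilde{x}_l$ is affine with nonzero slope $-\tilde{x}_l$, hence strictly monotone on $(a_0-\varepsilon,a_0+\varepsilon)$; therefore $f$ is strictly monotone near $a_0$ and cannot attain its minimum there, contradicting $f(a_0)=\min\{f(a):a\in\R\}$. Consequently some index $j\ne k,l$ satisfies $f(a_0)=|\tilde{y}_j-a_0\tilde{x}_j|$. I do not anticipate a real obstacle here; the only step that needs care is the observation that the midpoint normalization makes the $k$- and $l$-terms identical, which is exactly what collapses the local picture of $f$ to a single affine piece and rules out a minimum.
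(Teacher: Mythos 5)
Your proof is correct and follows essentially the same route as the paper: you observe that the midpoint normalization makes the $k$- and $l$-terms negatives of each other (hence of equal absolute value for every $a$), and that if all other terms were strictly smaller at $a_0$, then locally $f$ coincides with the single affine piece $a\mapsto \tilde{y}_l-a\tilde{x}_l$ of nonzero slope $-\tilde{x}_l$, contradicting minimality. The paper phrases the final step as differentiability of $f$ at $a_0$ with $f'(a_0)=-\tilde{x}_l=\tilde{x}_k\ne 0$, which is the same observation as your strict monotonicity argument.
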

\begin{proof}
Note that 
$|\tilde{y}_l-a\tilde{x}_l| = |\tilde{y}_k-a\tilde{x}_k|$ holds for all $a\in\R$, because
$\tilde{y}_l-a\tilde{x}_l = (x_l-x_k)/2-a(y_l-y_k)/2$ and $\tilde{y}_k-a\tilde{x}_k = (x_k-x_l)/2-a(y_k-y_l)/2$.
If the inequality $|\tilde{y}_j-a_0\tilde{x}_j| < |\tilde{y}_k-a_0\tilde{x}_k|$ holds for all $j\ne k,l$, 
then $f$ is differentiable at $a_0$ and $f'(a_0) = -\tilde{x}_l=\tilde{x}_k \ne 0$.
\end{proof}
\subsubsection{Existence and Uniqueness of the optimal line}
\label{subsubsec:alg-linfinity-Eindeutigkeit}
\begin{cor}
The function $f:\R^2\to\R$ defined by $(a,b)\mapsto \max\{ |y_j-ax_j-b|:j=1\ldots, m\}$ admits a global
minimum.
\end{cor}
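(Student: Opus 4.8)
The plan is to eliminate the variable $b$ first and then solve a one-variable problem. By Lemma~\ref{lem:alg-Linfty-opti-b}, for every fixed slope $a$ the partial function $b\mapsto f(a,b)$ attains its minimum, and this minimum equals
$$\varphi(a):=\min_{b\in\R} f(a,b)=\tfrac12\bigl(M(a)-\mu(a)\bigr),\qquad M(a):=\max_{1\le j\le m}(y_j-ax_j),\ \ \mu(a):=\min_{1\le j\le m}(y_j-ax_j),$$
the minimizing value being $b_0(a)=\tfrac12\bigl(M(a)+\mu(a)\bigr)$. Hence $\inf_{(a,b)\in\R^2}f(a,b)=\inf_{a\in\R}\varphi(a)$, and it suffices to show that $\varphi$ attains a global minimum: any minimizer $a_0$ of $\varphi$ together with $b_0(a_0)$ then gives a global minimum of $f$, since $f(a_0,b_0(a_0))=\varphi(a_0)=\inf_a\varphi(a)=\inf_{(a,b)}f(a,b)$.

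Next I would observe that $\varphi$ is continuous. The functions $M$ and $\mu$ are, respectively, the pointwise maximum and minimum of the finitely many affine maps $a\mapsto y_j-ax_j$, hence continuous (in fact piecewise linear), and $\varphi$ is half of their difference.

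The one step with real content is coercivity of $\varphi$. The standard assumption gives indices $j\ne k$ with $x_j\ne x_k$, and for such indices
$$M(a)-\mu(a)\ \ge\ \bigl|(y_j-ax_j)-(y_k-ax_k)\bigr|\ =\ \bigl|(y_j-y_k)-a(x_j-x_k)\bigr|\ \ge\ |a|\,|x_j-x_k|-|y_j-y_k|,$$
so $\varphi(a)\to\infty$ as $|a|\to\infty$. Consequently there is $R>0$ with $\varphi(a)>\varphi(0)$ for all $|a|>R$, whence $\inf_{\R}\varphi=\min_{[-R,R]}\varphi$, the minimum being attained because $\varphi$ is continuous on the compact interval $[-R,R]$.

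I expect everything except this coercivity estimate to be routine bookkeeping about the reduction. An alternative, shorter route is to note directly that $f$ itself is continuous and coercive on $\R^2$ — along any ray $(a,b)=(a_1,b_1)+t(u,v)$ with $(u,v)\ne 0$, at least one term $|y_j-ax_j-b|$ grows without bound because the $x_j$ are pairwise distinct and $m\ge2$ — and invoke the appendix facts on convex functions. I prefer the two-step version, though, since it matches the constructions of this section and pins down where the optimum lies (on a line through a midpoint $(p_k+p_l)/2$, cf.\ Corollary~\ref{folg:alg-linfinity-Verschiebung-in-Mittelpunkt}).
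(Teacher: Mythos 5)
Your argument is correct. It shares the paper's overall strategy -- first minimize over $b$ for fixed slope via Lemma~\ref{lem:alg-Linfty-opti-b}, then obtain the minimum over $a$ from continuity plus coercivity -- but the second step is organized differently. The paper routes the reduction through Corollary~\ref{folg:alg-linfinity-Verschiebung-in-Mittelpunkt}: for each of the finitely many pairs $k\ne l$ it considers the rotation family of lines through the midpoint $(p_k+p_l)/2$ (subsection~\ref{subsubsec:Drehung-linfty-alg}) and observes that each such one-variable function is coercive because $\tilde{x}_k,\tilde{x}_l\ne 0$, so the global minimum of $f$ is the smallest of finitely many attained minima. You instead work directly with the value function $\varphi(a)=\min_b f(a,b)=\tfrac12(M(a)-\mu(a))$, note it is continuous as a difference of finite maxima/minima of affine functions, and prove coercivity by the explicit estimate $M(a)-\mu(a)\ge |a|\,|x_j-x_k|-|y_j-y_k|$ for one pair with $x_j\ne x_k$; this uses the standard assumption only through the existence of two distinct abscissae and is slightly more self-contained, since it needs neither the midpoint corollary nor the case analysis over pairs. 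What the paper's version buys in exchange is structural information that is reused immediately afterwards: the optimum lies on a line through a midpoint $(p_k+p_l)/2$ and is parallel to an edge of the convex hull, which feeds the uniqueness argument and Theorem~\ref{satz:alg-Linfty-Optimenge}. Your closing remark that $f$ itself is coercive on $\R^2$ is also correct, but it is an aside and not needed once the two-step reduction is in place.
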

\begin{proof}
For any $k\ne l$ the continuous functions investigated in subsection \ref{subsubsec:Drehung-linfty-alg}  admit a global minimum, since $\lim_{a\to\pm\infty} |\tilde{y}_j-a\tilde{x}_j| = \infty$ 
for all $j$ with $\tilde{x}_j\ne 0$.
\end{proof}
If $0 < f(a_0,b_0)= \min \{ f(a,b):a,b\in\R\}$, 
then Corollary \ref{folg:alg-linfinity-Verschiebung-in-Mittelpunkt} and 
Lemma \ref{lem:alg-linfinity-Drehung-zu-Kantenparallele} imply the existence of three pairwise distinct 
indices $k_1,k_2,k_3$ with the following properties:
The line $g_{k_1k_2}$ through $p_{k_1}$ and $p_{k_2}$ is parallel to the line given by $y=a_0x+b_0$, 
$f(a_0,b_0) = |y_{k_i}-a_0x_{k_i}-b_0|$ for $i=1,2,3$, and $p_{k_3}\not\in g_{k_1k_2}$.
Thus, the value $f(a_0,b_0)$ is half of the vertical distance between the point $p_{k_3}$ and the
line $g_{k_1k_2}$, i.e.,
$$f(a_0,b_0) = \frac{1}{2}
\left\vert y_{k_3}-y_{k_1} - \frac{y_{k_2}-y_{k_1}}{x_{k_2}-x_{k_1}}(x_{k_3} -x_{k_1})\right\vert.$$
\begin{lemma}
If $f(a_0,b_0) = f(a_1,b_1) = \min\{ f(a,b) :a,b\in\R\}$, then $a_0=a_1$ and $b_0=b_1$.
\end{lemma}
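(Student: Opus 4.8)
The plan is to prove uniqueness by exploiting the convexity of $f$, which was already noted when $f$ was introduced in this subsection. Since $f(a,b)=\max_j|y_j-ax_j-b|$ is a maximum of affine functions, it is convex on $\R^2$; hence its set of minimizers is a convex set, and if $(a_0,b_0)\ne(a_1,b_1)$ are both minimizers, then every point of the segment joining them is a minimizer, and $f$ is in fact \emph{constant} on that segment. So it suffices to rule out that $f$ is constant along a nondegenerate line segment in the $(a,b)$-plane at the minimal value.

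First I would dispose of the degenerate case. If the minimal value is $0$, then all points $p_j$ lie on a single line; since the $x_j$ are pairwise distinct and $m\ge 2$, that line is the graph of a unique linear function, so $(a_0,b_0)=(a_1,b_1)$. Hence we may assume $f(a_0,b_0)>0$, and the structural description established just before the lemma applies: there are three pairwise distinct indices $k_1,k_2,k_3$ such that the line $y=a_0x+b_0$ is parallel to $g_{k_1k_2}$, it realizes the common value $f(a_0,b_0)=|y_{k_i}-a_0x_{k_i}-b_0|$ for $i=1,2,3$, and $p_{k_3}\notin g_{k_1k_2}$. Geometrically this says the optimal strip $\{|y-a_0x-b_0|\le f(a_0,b_0)\}$ has $p_{k_1},p_{k_2}$ on one boundary edge and $p_{k_3}$ on the opposite one (in the vertical sense), and moreover the formula $f(a_0,b_0)=\tfrac12|\,y_{k_3}-y_{k_1}-\tfrac{y_{k_2}-y_{k_1}}{x_{k_2}-x_{k_1}}(x_{k_3}-x_{k_1})\,|$ pins the minimal value down once $k_1,k_2,k_3$ are chosen.

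The key step is then the following: suppose $(a_1,b_1)$ is another minimizer and $f$ is constant on the segment between $(a_0,b_0)$ and $(a_1,b_1)$. Along that segment the three affine functions $g(a,b)=y_{k_1}-ax_{k_1}-b$, $\pm(y_{k_3}-ax_{k_3}-b)$ (with signs chosen to match which boundary each point sits on) all equal the constant $\pm f(a_0,b_0)$ — because each of them is $\le f$ in absolute value, equals it at $(a_0,b_0)$, and a convex combination of values all $\le c$ that equals $c$ forces each to equal $c$; applied to the relevant signed affine function this forces it to be constant on the segment. Two of these (the one through $p_{k_1}$ and the one through $p_{k_3}$) are affine functions of $(a,b)$ whose gradients $(-x_{k_1},-1)$ and $(-x_{k_3},-1)$ are linearly independent, since $x_{k_1}\ne x_{k_3}$; an affine function with nonzero gradient cannot be constant on a nondegenerate segment, and two with independent gradients cannot simultaneously be constant on the same nondegenerate segment. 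This contradiction forces $(a_0,b_0)=(a_1,b_1)$.

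I expect the main obstacle to be bookkeeping the signs: one must say precisely which of $y_{k_i}-ax_{k_i}-b=+f$ and which $=-f$ holds (i.e.\ which two of $k_1,k_2,k_3$ lie on the same vertical boundary of the strip), and verify that at least two of the chosen indices have distinct $x$-coordinates so that the corresponding gradients $(-x_{k_i},-1)$ are genuinely independent — this is where the standing assumption $x_j\ne x_k$ for $j\ne k$ is used. Once that is set up, the argument is just the elementary fact that an affine functional is not constant on a line segment unless its linear part vanishes, combined with the convexity already in hand. An alternative, essentially equivalent, route avoids the three-point structure and argues directly: from $f(a_0,b_0)=f(a_1,b_1)=c$ and convexity one gets $f\equiv c$ on the segment; pick any index $j$ achieving the max at an interior point, note $|y_j-ax_j-b|$ is then pinned to $c$ on a subsegment, hence $y_j-ax_j-b$ is affine and constant there, and a short covering/continuity argument produces two indices with distinct $x$-coordinates both pinned on a common subsegment, giving the same independent-gradients contradiction.
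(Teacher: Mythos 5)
Your overall strategy---convexity forces $f$ to be constant on the segment joining two minimizers, and then two ``pinned'' affine functionals with linearly independent gradients $(-x_{k},-1)$, $(-x_{l},-1)$ rule out a nondegenerate segment---is sound and genuinely different from both of the paper's arguments (the paper either observes that only finitely many candidate lines arise from the triples $(k_1,k_2,k_3)$ and that a finite convex set is a singleton, or argues geometrically with the intersection of two strips in the $(x,y)$-plane). Your version is arguably the cleanest of the three, and the sign bookkeeping you worry about is a non-issue: whichever sign each pinned functional carries, constancy along the segment forces its gradient to be orthogonal to the segment direction, and two independent gradients cannot both be orthogonal to a nonzero vector.

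There is, however, one genuine gap in the main route as you state it. From $|g_j|\leq c$ on the segment and $|g_j(a_0,b_0)|=c$ at an \emph{endpoint} you cannot conclude that $g_j$ is constant on the segment: an affine function can equal $c$ at an endpoint and strictly decrease into the interior while another index takes over the maximum (e.g.\ $\max\{1,|1-2t|\}$ on $[0,1]$ is constant, yet $|1-2t|$ attains the maximum at $t=0$ without being constant). The convex-combination argument you invoke only works at an \emph{interior} point $q$ of the segment, where $g_j(q)=\tfrac12 g_j(q+\varepsilon v)+\tfrac12 g_j(q-\varepsilon v)$ with both values $\leq c$ does force both to equal $c$, hence $g_j$ constant. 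The repair is immediate: apply the structural description (or, more economically, Corollary \ref{folg:alg-linfinity-Verschiebung-in-Mittelpunkt}, which already supplies two indices $k\neq l$ achieving $f(a_0,b_0)=\beta_l-b_0=b_0-\beta_k$, necessarily with $x_k\neq x_l$ under the standing assumption) at the \emph{midpoint} of the segment of minimizers rather than at $(a_0,b_0)$. Your ``alternative route'' at the end does exactly this and is correct; the covering/continuity step there is superfluous once you cite that corollary. With that adjustment the proof is complete.
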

\begin{proof}
There are only finitely many triples $(k_1,k_2,k_3)$ with the properties described in 
Lemma \ref{lem:alg-linfinity-Drehung-zu-Kantenparallele}. 
Therefore, the global minimum of the function $f(a,b)$ is attained only at finitely many lines. 
Since $f$ is a convex function, the set of optimal lines is a convex set.
Finite convex sets contain at most one element.

This lemma can be proven without making use of the general properties of convex functions directly.
We provide this second proof to clarify the link between the uniqueness of the optimal line 
and the standard assumption $x_j\ne x_k$ for all $j\ne k$:
The condition $d=f(a_0,b_0)=f(a_1,b_1)$ implies
$$p_jÊ\in \{ (x,y)^T : d\geq |y-a_ix-b_i|,  i = 0, 1 \} =:M \text{ for all } j.$$

If $a_0=a_1$, then all points  $p_j$ are contained in the intersection of two parallel stripes of vertical width $2d$. This intersection is a parallel stripe of width $< 2d$ if and only if $b_0\ne b_1$, since
$$M=\left\{ (x,y) : d+\min(b_0,b_1) \geq y-a_0x \geq -d +\max(b_1,b_0)\right\}$$
for $a_0=a_1$.

If $a_1 \ne a_0$, then $M$ is a parallelogram.
Using the triangle inequality we obtain 
$M \subset \left\{  (x,y) : 2d\geq \left\vert 2y-(a_1+a_0)x-(b_1+b_0) \right\vert \right\}$.
Since $d$ is the global minimum of the function $f$, it follows that
$f(\frac{a_0+a_1}{2},\frac{b_0+b_1}{2})  = d$.
A small calculation, that again uses the triangle inequality, shows that
$$M \cap \left\{  (x,y) : 2d =  \left\vert 2y-(a_1+a_0)x-(b_1+b_0) \right\vert \right\} 
	= \{ (x_0,a_0x_0+b_0\pm d)\}$$
for $x_0=-(b_1-b_0)/(a_1-a_0)$.
This fact contradicts Corollary \ref{folg:alg-linfinity-Verschiebung-in-Mittelpunkt},
since the coordinates $x_j$ are pairwise distinct.
\end{proof}
\subsubsection{Summary of the algebraic $L^\infty$-distance}
\label{subsubsec:Zusammenfassung-max-alg}
The notions convex hull of the set $\{p_1,\ldots,p_m\}$, edge and vertex 
are appropriate to effectively characterize the triples of indices with the properties 
being specified in subsection \ref{subsubsec:alg-linfinity-Eindeutigkeit} .

The convex hull $H(\{p_1,\ldots,p_m\})$ of the points $p_1,\ldots,p_m$ is denoted by $P$.
A point $p_k$ is a vertex of the polytope $P$, if $p_k$ is not contained in the convex hull of the
remaining points $p_j$, $j\ne k$. 
The set of vertices is denoted by $V$, i.e., $V := \{Êp_k : p_k \not\in H(\{ p_j:j\ne k\}) \}$.
The boundary $\partial{P}$ of the set $P$ consists of line segments of the form
$S_{kl} :=\{ p_l+t(p_k-p_l): 0\leq t\leq 1\}$ with $k,l\in V$.
\begin{theorem}
\label{satz:alg-Linfty-Optimenge}
Let $p_j=(x_j,y_j)^T$, $j=1,\ldots,m$, such that  $x_j\ne x_k$ for all $j\ne k$.\\
There exists a unique linear function with minimal algebraic $L^\infty$-distance to the set
$\{p_1,\ldots,p_m\}$.
Moreover,
\begin{align*}
\min_{a,b\in\R} f(a,b) & = \min_{a,b\in\R} \max_{j=1,\ldots,m} |y_j-ax_j-b|
	= \min_{a,b\in\R} \max_{p_j\in V} |y_j-ax_j-b| \\
& = \frac{1}{2} \min_{S_{kl}\in \partial{P}} \max_{p_j\in V}
	\left\vert y_{j}-y_{k} - \frac{y_{k}-y_{l}}{x_{k}-x_{l}}(x_{j} -x_{k})\right\vert.
\end{align*}
If $0 < f(a_0,b_0)=\min\{ f(a,b):a,b\in\R\}$, then there exist pairwise distinct indices $k_1,k_2,k_3 \in V$
such that $f(a_0,b_0) = |y_{k_i}-a_0x_{k_i}-b_0|$ for $i=1,2,3$ and $S_{k_1k_2}\subset \partial P$.
\end{theorem}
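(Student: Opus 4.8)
The plan is to combine the translation and rotation lemmas of the preceding subsections to reduce the minimization over all lines to a finite minimization, and then to invoke the uniqueness lemma. First I would dispose of the existence claim: it is already recorded in the corollary immediately before the uniqueness lemma, so $f$ attains a global minimum at some $(a_0,b_0)$. If $f(a_0,b_0)=0$, then the unique line through all $m$ points is the optimum and every displayed chain of equalities collapses to $0=0$; all the points then coincide with vertices in the degenerate sense, and there is nothing more to prove. So assume $f(a_0,b_0)>0$.

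Next I would extract the combinatorial structure of an optimizer. By Corollary~\ref{folg:alg-linfinity-Verschiebung-in-Mittelpunkt}, at the optimum there are indices $k\ne l$ with $f(a_0,b_0)=\beta_l-b_0=b_0-\beta_k$, so the midpoint $(p_k+p_l)/2$ lies on the optimal line. Fixing this midpoint and rotating, Lemma~\ref{lem:alg-linfinity-Drehung-zu-Kantenparallele} furnishes a third index $j\ne k,l$ with $f(a_0,b_0)=|\tilde y_j-a_0\tilde x_j|$. Renaming, this gives pairwise distinct $k_1,k_2,k_3$ with $f(a_0,b_0)=|y_{k_i}-a_0x_{k_i}-b_0|$ for $i=1,2,3$, the line $g_{k_1k_2}$ parallel to the optimal line, and $p_{k_3}\notin g_{k_1k_2}$; hence $f(a_0,b_0)$ equals half the vertical distance from $p_{k_3}$ to $g_{k_1k_2}$, which is exactly the displayed absolute-value expression. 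The uniqueness lemma then gives that $(a_0,b_0)$ is the \emph{only} optimizer, proving the uniqueness assertion.

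The remaining work is to upgrade ``pairwise distinct indices $k_1,k_2,k_3$'' to ``$k_1,k_2,k_3\in V$'' and ``$g_{k_1k_2}$'' to ``$S_{k_1k_2}\subset\partial P$''. Here is where I would argue that an extreme point of $f$ must be supported by extreme points of the configuration. Since the three points $p_{k_1},p_{k_2},p_{k_3}$ lie on the two boundary lines $y=a_0x+b_0\pm f(a_0,b_0)$ of the optimal strip, and $P$ is contained in that strip, the points $p_{k_1},p_{k_2}$ lie on a supporting line of $P$; hence the segment joining them (or a larger collinear segment through other data points) is a face of $P$, i.e.\ contained in $\partial P$, and its endpoints are vertices. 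After possibly relabelling $k_1,k_2$ to the extreme collinear data points on that supporting line, $S_{k_1k_2}\subset\partial P$ with $k_1,k_2\in V$. For $k_3$: it lies on the opposite supporting line $y=a_0x+b_0-\operatorname{sgn}\cdot f(a_0,b_0)$, and among the data points on that line at least one is a vertex (an endpoint of the corresponding face of $P$, or the unique point if the face is a single point), which still realizes $f(a_0,b_0)=|y_{k_3}-a_0x_{k_3}-b_0|$; replace $k_3$ by such a vertex. This proves the last sentence of the theorem.

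Finally, the chain of equalities for the minimum value follows by the same reduction run ``in reverse'': since every optimizer is supported by vertices, restricting the max over $j=1,\dots,m$ to the max over $p_j\in V$ does not change $\min_{a,b}\max$ (the inner max can only shrink, but it still controls all of $P$ hence all data points); and since some optimizer has its line parallel to a boundary segment $S_{kl}\subset\partial P$ with $k,l\in V$, with optimal value half the vertical distance from the farthest vertex to that segment, taking the min over such segments $S_{kl}$ and the max over $p_j\in V$ reproduces $\min_{a,b}f(a,b)$. The one direction that needs a word of care is that $\min_{S_{kl}}\max_{p_j\in V}\frac12|\cdots|$ is not \emph{smaller} than $\min_{a,b}f(a,b)$: for each boundary segment $S_{kl}$, the line parallel to it placed halfway between it and the farthest vertex is an admissible $(a,b)$, whose objective value is exactly that half-distance, so each term of the outer min dominates $\min_{a,b}f(a,b)$. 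I expect the main obstacle to be precisely this vertex-replacement step—making rigorous that the supporting lines of the optimal strip meet $P$ in faces whose endpoints are vertices, and handling the degenerate case where such a face is a single point—rather than any of the calculus, which is entirely delegated to the earlier lemmas.
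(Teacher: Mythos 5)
Your proposal is correct and follows essentially the same route as the paper, which in fact states this theorem without a separate proof: it is meant as a summary of Corollary~\ref{folg:alg-linfinity-Verschiebung-in-Mittelpunkt}, Lemma~\ref{lem:alg-linfinity-Drehung-zu-Kantenparallele}, the existence corollary and the uniqueness lemma of subsection~\ref{subsubsec:alg-linfinity-Eindeutigkeit}, exactly the pieces you assemble. The one step the paper leaves implicit --- replacing $k_1,k_2,k_3$ by vertices via the observation that the two lines $y=a_0x+b_0\pm f(a_0,b_0)$ support $P$ and meet it in faces whose endpoints are data points --- is supplied correctly by your supporting-line argument.
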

\begin{cor}
\label{folg:algLinfSpiegelsymmetrie}
If the set $\{ p_1,\ldots,p_m\}$ is invariant under the reflection in the line defined by $x=\bar{x}$, 
then the linear function with minimal algebraic $L^\infty$-distance is
$$y(x) \equiv \frac{1}{2}\left(\max\{ y_j:j=1,\ldots,m\} +\min\{ y_j:j=1,\ldots,m\} \right).$$ 
\end{cor}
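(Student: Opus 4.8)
The plan is to combine the uniqueness assertion of Theorem \ref{satz:alg-Linfty-Optimenge} with the reflection symmetry of the point set. First I would observe that the reflection $A\colon (x,y)^T\mapsto (2\bar x-x,\,y)^T$ in the line $x=\bar x$ is the composition of a translation with the reflection in the $y$-axis, both of which leave the algebraic distance unchanged (see the subsection on symmetries). Consequently the set of algebraically $L^\infty$-optimal lines is equivariant under $A$: a linear function is optimal for $\{p_1,\dots,p_m\}$ if and only if its image under $A$ is optimal for $\{A(p_1),\dots,A(p_m)\}$, and by hypothesis $\{A(p_1),\dots,A(p_m)\}=\{p_1,\dots,p_m\}$. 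Since Theorem \ref{satz:alg-Linfty-Optimenge} guarantees that the optimal linear function $y(x)=a_0x+b_0$ is unique, its graph must be carried onto itself by $A$.

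Next I would make this image explicit: $A$ sends the graph of $y=a_0x+b_0$ onto the graph of $y=a_0(2\bar x-x)+b_0=-a_0x+(2a_0\bar x+b_0)$. Two graphs of linear functions coincide exactly when the functions agree, so $-a_0=a_0$ and therefore $a_0=0$; the optimal linear function is constant, say $y\equiv b_0$.

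It then remains to identify $b_0$. With $a_0=0$ we have $\beta_j=y_j$ for every $j$, and Lemma \ref{lem:alg-Linfty-opti-b}, applied with slope $a=0$, shows that $f(0,\cdot)$ attains its minimum precisely at $b_0=(\beta_{\sigma(m)}+\beta_{\sigma(1)})/2=\tfrac12\big(\max_j y_j+\min_j y_j\big)$, which is the value claimed in the corollary.

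I do not anticipate a genuine obstacle in this argument; the only point that needs a moment of care is the very first one, namely verifying that the reflection in $x=\bar x$ — which is not itself a reflection in a coordinate axis but differs from one by a translation — really preserves the algebraic distance, so that the equivariance statement from the symmetries subsection is applicable. Once that is granted, the remainder is a one-line computation together with a direct appeal to Lemma \ref{lem:alg-Linfty-opti-b}.
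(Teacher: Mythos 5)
Your argument is correct and is exactly the paper's proof, just written out in full: the paper's one-line proof also invokes the uniqueness of the optimal line (as in Corollary \ref{folg:algL2Spiegelsymmetrie}) to force the reflection-invariance and hence slope zero, and then Lemma \ref{lem:alg-Linfty-opti-b} to identify $b_0=(\max_j y_j+\min_j y_j)/2$. No gaps; your extra care about the reflection in $x=\bar x$ being a translate of a coordinate-axis reflection is a worthwhile clarification the paper leaves implicit.
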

\begin{proof}
Similar to the proof of Lemma \ref{folg:algL2Spiegelsymmetrie} this assertion follows from the
uniqueness of the optimal line and Lemma \ref{lem:alg-Linfty-opti-b}.
\end{proof}
\begin{rem}
Note that the optimal lines with minimal algebraic $L^\infty$-distance depend only on the convex hull
$P$ of the points $p_1,\ldots,p_m$. In particular, the minimum of $f$ is equal to zero if and only if 
$P$ is a line segment, i.e., there are at most two vertices.
As long as only edges not parallel to the $y$-axis are considered, 
Theorem \ref{satz:alg-Linfty-Optimenge} remains true, if the condition $x_j\ne x_k$ for all $j\ne k$ 
is weakened to the existence of indices $j\ne k$ satisfying $x_j\ne x_k$.

However, if $x_1=\ldots=x_m=\bar{x}$, then $P$ is a line segment parallel to the $y$-axis.
The linear functionen $y(x)=a(x-\bar{x})+b$ has minimal algebraic $L^\infty$-distance to the
set $\{p_1,\ldots,p_m\}$ if and only if  $a\in\R$ and 
$$b=\frac{1}{2}\left(\min\{ y_1,\ldots,y_m\}+\max\{ y_1,\ldots,y_m\}\right).$$
In contrast to Theorem \ref{satz:alg-Linfty-Optimenge}, these are infinitely many optimal lines.
\end{rem}
\subsubsection{Examples}
\label{subsubsec:Beispiele-alg-linfty}
\paragraph{Three points:}
As in the first example in subsection \ref{subsubsec:Beispiele-alg-l1} we consider three points
$p_j=(x_j,y_j)^T\in\R^2$ with  $x_1<x_2<x_3$. 
If $p_1,p_2,p_3$ are not collinear, then $E=\{ p_1,p_2,p_3\}$. 
Applying the results of subsection \ref{subsubsec:Beispiele-alg-l1} we conclude that
the line parallel to $S_{31}$ with equal vertical distance to $S_{31}$ and $p_2$ 
has minimal algebraic $L^\infty$-distance to $\{p_1,p_2,p_3\}$, 
because $p_2$ is the given point with smallest vertical distance 
to the corresponding opposite side of the triangle.
\paragraph{Four points:} 
We consider the examples concerning four given points 
of the subsections \ref{subsubsec:Beispiele-alg-l1} (see Figure \ref{fig:algl1m4}) and
\ref{subsubsec:Beispiele-geo-l1} (see Figure \ref{fig:geol1m4}).
The linear functions with minimal algebraic $L^\infty$-distance can be quickly determined 
for these given point sets, because three of the four given points are collinear in both cases.
Hence, the set of vertices consists of three elements.
As in the paragraph above, the optimal line is parallel to the line through the two vertices with
smallest respectively largest $x$-coordinate
(see Figures \ref{fig:alglinfm4} and \ref{fig:alglinfm4p}).
\begin{figure}
\noindent
\begin{minipage}[t]{.49\linewidth}
  \mbox{\scalebox{.26}{\includegraphics{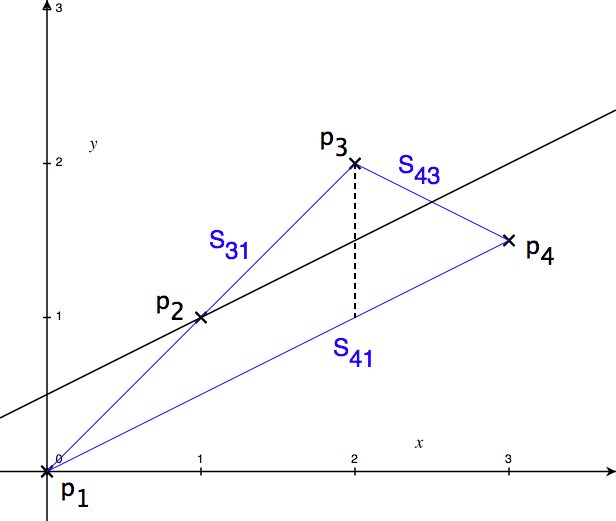}}}
\caption{}
\label{fig:alglinfm4}
\end{minipage}
\begin{minipage}[t]{.50\linewidth}
  \mbox{\scalebox{.26}{\includegraphics{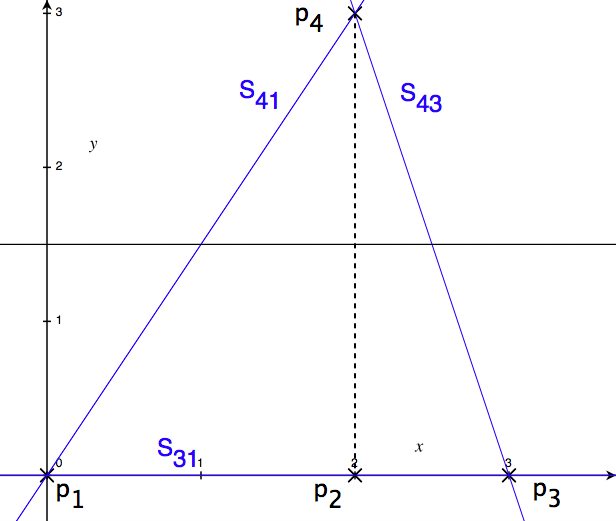}}}
\caption{}
\label{fig:alglinfm4p}
\end{minipage}
\end{figure}
\paragraph{Invariance under reflection symmetry:}
The set of points $p_1,\ldots,p_5$ in Figure \ref{fig:algl1m5} ist invariant under reflection in the $y$-axis.
Corollary \ref{folg:algLinfSpiegelsymmetrie} leads to the optimal linear function $y(x) \equiv 0$,
since $\bar{p} = (0,0)^T$. 
\subsection{Minimal geometric $L^\infty$-distance}
Given pairwise distinct points $p_j\in \R^2$, $j=1,\ldots,m$, we determine lines given by
$g = \{ q\in \R^2 : c=\langle q,n\rangle\}$, $c\in\R$, $n\in S^1$ with minimal geometric 
$L^\infty$-distance to the set $\{Êp_1,\ldots,p_m\}$, i.e., the minimum of the  function
$f:\R\times S^1\to\R$ defined by $f(c,n)=\max\{| c-\langle p_j,n\rangle| : j=1,\ldots,m\}$.

As in subsection \ref{subsec:geo-l1} we decompose the index set $\{ 1,\ldots,m\}$.
For every line $\{Êq\in\R^2: \langle q,n\rangle = c\}$ set
$J_+:=\{j: \langle p_j,n\rangle > c\}$, $J_0:=\{j: \langle p_j,n\rangle = c\}$ and
$J_-:=\{j: \langle p_j,n\rangle < c\}$ (see Figure \ref{fig:ZerlegGeo}).
Now
$$f(c,n) = \max\{ 0, \max\{ \langle p_j,n\rangle -c : j\in J_+\},  \max\{ c-\langle p_j,n\rangle: j\in J_-\}\} .$$
Similar to the investigation of the algebraic $L^\infty$-distance in section
\ref{sec:Maximaler-algebraischer-Abstand} we show that the function $f$ admits a global minimum
and describe optimal lines by means of edges and vertices of the convex hull of $p_1,\ldots,p_m$. 
Transferring the statements of section \ref{sec:Maximaler-algebraischer-Abstand},
note that $f(c,n)$ is convex only in the variable $c$.
\subsubsection{Translation}
Set $\gamma_j:=\langle p_j,n\rangle$. Let $\sigma$ be a permutation satisfying
$ \gamma_{\sigma(1)}\leq \ldots \leq \gamma_{\sigma(m)}$ 
as in subsection \ref{subsubsec:Parallel-l1-geo} (see Figure \ref{fig:ZerlegGeo}).
\begin{lemma}
It holds $\min\{ f(c,n) :c\in\R\} = f(c_0,n)$ if and only if 
$c_0=(\gamma_{\sigma(m)}+\gamma_{\sigma_{\sigma(1)}})/2$.
\end{lemma}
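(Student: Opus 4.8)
The plan is to mirror exactly the one-variable argument that was used for the algebraic $L^\infty$-distance in Lemma~\ref{lem:alg-Linfty-opti-b}, since fixing the normal vector $n$ reduces $f(\cdot,n)$ to the same shape of function. First I would observe that, with $\gamma_j=\langle p_j,n\rangle$, we have
$$f(c,n) = \max_{j=1,\ldots,m} |\gamma_j - c| = \max\{ |\gamma_{\sigma(m)} - c|, |\gamma_{\sigma(1)} - c|\},$$
because the distance from a point $c$ to the finite set $\{\gamma_1,\ldots,\gamma_m\}\subset\R$ (measured as the largest of the individual distances) is realized at the extreme points of that set; every intermediate $\gamma_j$ satisfies $\gamma_{\sigma(1)}\le \gamma_j\le \gamma_{\sigma(m)}$ and hence $|\gamma_j-c| \le \max\{|\gamma_{\sigma(1)}-c|,|\gamma_{\sigma(m)}-c|\}$.

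Next I would treat the two-term maximum $c\mapsto \max\{|\gamma_{\sigma(m)}-c|,|\gamma_{\sigma(1)}-c|\}$ directly. It is a convex, piecewise-linear function of $c$ with slope $-1$ to the left of $(\gamma_{\sigma(1)}+\gamma_{\sigma(m)})/2$ (where the first term dominates) and slope $+1$ to the right (where the second term dominates, once we note $\gamma_{\sigma(1)}\le\gamma_{\sigma(m)}$ forces the roles to swap at the midpoint). Hence its unique minimum is attained precisely at the midpoint $c_0 = (\gamma_{\sigma(m)} + \gamma_{\sigma(1)})/2$, with minimal value $(\gamma_{\sigma(m)}-\gamma_{\sigma(1)})/2$. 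Equivalently, and this is the cleaner way to phrase it for the write-up: for any $c$,
$$f(c,n)\ge \tfrac12\bigl(|\gamma_{\sigma(m)}-c| + |c-\gamma_{\sigma(1)}|\bigr) \ge \tfrac12(\gamma_{\sigma(m)}-\gamma_{\sigma(1)}),$$
by the triangle inequality, and equality throughout holds if and only if $|\gamma_{\sigma(m)}-c| = |\gamma_{\sigma(1)}-c|$ together with $c$ lying between $\gamma_{\sigma(1)}$ and $\gamma_{\sigma(m)}$, i.e. if and only if $c = c_0$. This gives both directions of the "if and only if" at once.

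There is essentially no obstacle here: the statement is the verbatim geometric analogue of Lemma~\ref{lem:alg-Linfty-opti-b}, and the only thing to be slightly careful about is that $\gamma_{\sigma(1)}\le\gamma_{\sigma(m)}$ always holds by the choice of $\sigma$, so the midpoint genuinely lies in $[\gamma_{\sigma(1)},\gamma_{\sigma(m)}]$ and the equality case is unambiguous. I would therefore keep the proof to one line, as the paper does for the algebraic case, writing that the assertion follows from the displayed inequality $f(c,n) = \max\{|\gamma_{\sigma(m)}-c|,|\gamma_{\sigma(1)}-c|\} \ge (\gamma_{\sigma(m)}-\gamma_{\sigma(1)})/2$ together with the characterization of when equality holds.

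\begin{proof}
The assertion follows from
$$f(c,n) = \max_{j=1,\ldots,m}\{ |\gamma_j-c| \} = \max\{ |\gamma_{\sigma(m)}-c|, |\gamma_{\sigma(1)}-c| \}
	\geq (\gamma_{\sigma(m)}-\gamma_{\sigma(1)})/2,$$
where the last inequality is the triangle inequality applied to $\gamma_{\sigma(m)}-\gamma_{\sigma(1)} = (\gamma_{\sigma(m)}-c) + (c-\gamma_{\sigma(1)})$, and from the fact that
$\max\{ |\gamma_{\sigma(m)}-c|, |\gamma_{\sigma(1)}-c| \} = (\gamma_{\sigma(m)}-\gamma_{\sigma(1)})/2$ if and only if $c = (\gamma_{\sigma(m)}+\gamma_{\sigma(1)})/2$, since $\gamma_{\sigma(1)}\leq\gamma_{\sigma(m)}$.
\end{proof}
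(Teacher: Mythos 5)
Your proposal is correct and is exactly the paper's argument: the paper proves this lemma by the substitution $b\mapsto c$, $\beta\mapsto\gamma$ in Lemma \ref{lem:alg-Linfty-opti-b}, whose proof is precisely the displayed inequality $\max\{|\gamma_{\sigma(m)}-c|,|\gamma_{\sigma(1)}-c|\}\geq(\gamma_{\sigma(m)}-\gamma_{\sigma(1)})/2$ with equality characterized at the midpoint. No differences worth noting.
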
 
\begin{proof}
Replace $b$ by $c$, $\beta$ by $\gamma$ and $a$ by $n$ 
in the proof of Lemma \ref{lem:alg-Linfty-opti-b}.
\end{proof}
\begin{cor}
\label{folg:geo-linfinity-Verschiebung-in-Mittelpunkt}
If $\min\{ f(c,n):c\in\R, n\in S^1\} = f(c_0,n_0)$, then there exist indices $k\ne l$ satisfying
$f(c_0,n_0) = \gamma_l-c_0 = c_0 - \gamma_k$ and $\langle (p_k+p_l)/2,n_0\rangle = c_0$.
\end{cor}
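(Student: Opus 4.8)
The plan is to mirror the argument used for Corollary~\ref{folg:alg-linfinity-Verschiebung-in-Mittelpunkt} in the algebraic setting, since the geometric $L^\infty$-distance restricted to a fixed normal vector $n$ behaves exactly like the algebraic one once we substitute $\beta_j$ by $\gamma_j=\langle p_j,n\rangle$. First I would invoke the preceding lemma: if $\min\{f(c,n):c\in\R\}=f(c_0,n)$, then $c_0=(\gamma_{\sigma(m)}+\gamma_{\sigma(1)})/2$, where $\sigma$ orders the $\gamma_j$. In particular, at the global minimum $(c_0,n_0)$ of $f:\R\times S^1\to\R$, the value $c_0$ must be this midpoint for $n=n_0$, because otherwise translating the line (varying $c$ with $n_0$ fixed) would strictly decrease $f$, contradicting global minimality.

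Next I would set $k:=\sigma(1)$ and $l:=\sigma(m)$ (the indices realizing the extreme projections onto $n_0$). Then $\gamma_k=\langle p_k,n_0\rangle$ is the smallest and $\gamma_l=\langle p_l,n_0\rangle$ the largest among all $\langle p_j,n_0\rangle$, and by the lemma $c_0=(\gamma_k+\gamma_l)/2$. A one-line computation gives
$$\gamma_l-c_0=\gamma_l-\tfrac{\gamma_k+\gamma_l}{2}=\tfrac{\gamma_l-\gamma_k}{2}
=\tfrac{\gamma_k+\gamma_l}{2}-\gamma_k=c_0-\gamma_k,$$
and this common value equals $f(c_0,n_0)$ since $f(c_0,n_0)=\max_j|\gamma_j-c_0|=\max\{|\gamma_l-c_0|,|\gamma_k-c_0|\}$ and both are equal. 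For the last claim, note
$$\langle \tfrac{p_k+p_l}{2},n_0\rangle=\tfrac12\big(\langle p_k,n_0\rangle+\langle p_l,n_0\rangle\big)=\tfrac{\gamma_k+\gamma_l}{2}=c_0,$$
so the midpoint $(p_k+p_l)/2$ lies on the optimal line.

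I do not anticipate a genuine obstacle here; the statement is essentially a translation of Corollary~\ref{folg:alg-linfinity-Verschiebung-in-Mittelpunkt} to the inner-product language, and the proof hint in the analogous algebraic case ("Replace $b$ by $c$, $\beta$ by $\gamma$ and $a$ by $n$") applies verbatim. The only point requiring a moment's care is making explicit that global minimality over $\R\times S^1$ forces, for the optimal normal vector $n_0$, the value $c_0$ to be the one-dimensional optimum from the lemma — but this is immediate because $f(c_0,n_0)=\min_{c}f(c,n_0)\le f(c,n_0)$ cannot be improved, so the lemma's characterization of the minimizing $c$ applies.

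\begin{proof}
Replace $b$ by $c$, $\beta$ by $\gamma$ and $a$ by $n$ in the proof of Corollary \ref{folg:alg-linfinity-Verschiebung-in-Mittelpunkt}. In detail: since $f(c_0,n_0)=\min\{f(c,n_0):c\in\R\}$, the preceding lemma yields $c_0=(\gamma_{\sigma(m)}+\gamma_{\sigma(1)})/2$. Put $k:=\sigma(1)$ and $l:=\sigma(m)$. Then
$$\gamma_l-c_0=\frac{\gamma_l-\gamma_k}{2}=c_0-\gamma_k=f(c_0,n_0),$$
the last equality because $f(c_0,n_0)=\max_j|\gamma_j-c_0|=\max\{|\gamma_l-c_0|,|\gamma_k-c_0|\}$. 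Finally
$$\langle \tfrac{p_k+p_l}{2},n_0\rangle=\tfrac12\big(\langle p_k,n_0\rangle+\langle p_l,n_0\rangle\big)=\tfrac{\gamma_k+\gamma_l}{2}=c_0.$$
\end{proof}
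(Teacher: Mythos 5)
Your proof is correct and follows essentially the same route as the paper, which simply says to substitute $\gamma$ for $\beta$ and $c_0$ for $b_0$ in the proof of the algebraic Corollary \ref{folg:alg-linfinity-Verschiebung-in-Mittelpunkt}; your choice $k=\sigma(1)$, $l=\sigma(m)$ and the midpoint computation match the paper's argument, with the minor bonus that you make explicit why global minimality over $\R\times S^1$ lets the one-variable translation lemma apply at $n_0$.
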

\begin{proof}
Replace  $\beta$ by $\gamma$ and  $b_0$ by $c_0$ 
in the proof of Lemma \ref{folg:alg-linfinity-Verschiebung-in-Mittelpunkt}.
\end{proof}
\begin{cor}
The function $f:\R\times S^1\to\R$ defined by the assignment
$(c,n) \mapsto \max\{| c-\langle p_j,n\rangle| : j=1,\ldots,m\}$ admits a global minimum.
\end{cor}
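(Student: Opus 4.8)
The plan is to eliminate the variable $c$ and reduce the minimisation to the compact set $S^1$, exactly as was done for the geometric $L^2$-distance. First I would define $\varphi:S^1\to\R^{\geq 0}$ by $\varphi(n):=\min\{f(c,n):c\in\R\}$. The preceding lemma guarantees that this minimum is attained and gives the explicit value
$$\varphi(n)=f\!\left(\tfrac{1}{2}(\gamma_{\sigma(m)}+\gamma_{\sigma(1)}),n\right)=\tfrac{1}{2}\bigl(\gamma_{\sigma(m)}-\gamma_{\sigma(1)}\bigr)=\tfrac{1}{2}\Bigl(\max_{j}\langle p_j,n\rangle-\min_{j}\langle p_j,n\rangle\Bigr),$$
where the right-hand expression no longer refers to the $n$-dependent permutation $\sigma$.

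Next I would observe that each map $n\mapsto\langle p_j,n\rangle$ is continuous on $S^1$, hence so are $n\mapsto\max_{j}\langle p_j,n\rangle$ and $n\mapsto\min_{j}\langle p_j,n\rangle$, being pointwise maxima and minima of finitely many continuous functions. Therefore $\varphi$ is continuous on $S^1$, and since $S^1$ is compact, $\varphi$ attains its infimum at some $n_0\in S^1$. Setting $c_0:=\tfrac{1}{2}(\gamma_{\sigma(m)}+\gamma_{\sigma(1)})$ for this $n_0$, one checks that $(c_0,n_0)$ is a global minimiser: for every $(c,n)\in\R\times S^1$ we have $f(c,n)\geq\varphi(n)\geq\varphi(n_0)=f(c_0,n_0)$, the first inequality by definition of $\varphi$ and the second by choice of $n_0$.

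There is no genuine obstacle here; the only point that requires a word of care is that the permutation $\sigma$ appearing in the formula for $\varphi$ varies with $n$, so one must rewrite $\varphi(n)$ in the permutation-free form above before invoking continuity. This argument mirrors the corresponding corollary for the geometric $L^1$-distance, with the optimal-$c$ formula from the translation lemma playing the role of the center-of-mass formula there.
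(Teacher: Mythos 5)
Your proof is correct and follows essentially the same route as the paper: eliminate $c$ via the translation lemma and then invoke continuity together with the compactness of $S^1$. The only cosmetic difference is that you write the reduced objective in the permutation-free closed form $\varphi(n)=\tfrac{1}{2}\bigl(\max_j\langle p_j,n\rangle-\min_j\langle p_j,n\rangle\bigr)$, whereas the paper reduces to the finitely many continuous functions $n\mapsto f(\langle (p_k+p_l)/2,n\rangle,n)$; your formulation makes the continuity of the reduced function slightly more transparent.
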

\begin{proof}
There are only finitely many points of the form $(p_k+p_l)/2$ with $k>l$, $S^1$ is a compact set and
the function $f$ is continuous.
\end{proof}
\subsubsection{Rotation}
For any pair of indices $(k,l)$ with $k \ne  l$ we consider the values of $f$ at lines containing the point 
$(p_k+p_l)/2$.
The condition $(p_k+p_l)/2 \in g = \{Êq : \langle q,n\rangle = c \}$ implies 
$\langle (p_k+p_l)/2,n\rangle = c$. 
Therefore, we investigate the function $f:S^1 \to\R$ defined by
\begin{align*}
f(n) & := f(\langle (p_k+p_l)/2,n\rangle,n) \\
& = \max\{| \langle (p_k+p_l)/2,n\rangle-\langle p_j,n\rangle| : j=1,\ldots,m\} \\
& = \max\{| \langle p_j-(p_k+p_l)/2,n\rangle| : j=1,\ldots,m\} \\
& = \max\{| \langle \tilde{p}_j,n\rangle| : j=1,\ldots,m\} \text{ where } \tilde{p}_j := p_j-(p_k+p_l)/2.
\end{align*}
Note that $\tilde{p}_k = - \tilde{p}_l \ne 0$, since $\tilde{p}_k = (p_k-p_l)/2$, $\tilde{p}_l = (p_l-p_k)/2$ 
and $p_k\ne p_l$.
\begin{figure}
\noindent
\begin{minipage}[t]{.50\linewidth}
 \mbox{\scalebox{.27}{\includegraphics{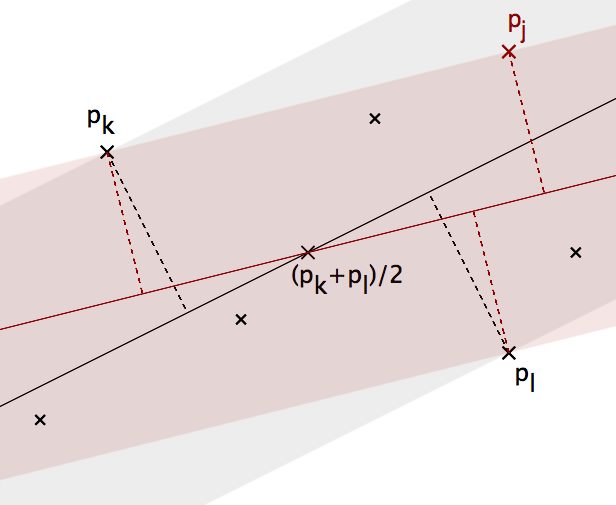}}}
\caption{}
\label{fig:LinfDreh}
\end{minipage}
\begin{minipage}[t]{.49\linewidth}
  \mbox{\scalebox{.26}{\includegraphics{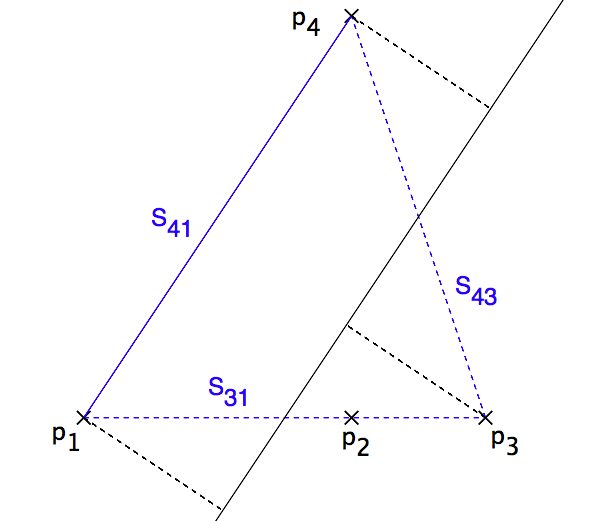}}}
\caption{}
\label{fig:geolinfm4}
\end{minipage}
\end{figure}
\begin{lemma}
\label{lem:geo-linfinity-Drehung-zu-Kantenparallele}
If $f(n_0) = \min\{ f(n) : n\inÊS^1\}$,   
$f(n_0) = \langle \tilde{p}_l,n_0\rangle = - \langle \tilde{p}_k,n_0\rangle$ and $f(n_0)>0$,
then there exists an index $j\ne k,l$ such that $f(n_0) = |\langle \tilde{p}_j,n_0\rangle|$.
\end{lemma}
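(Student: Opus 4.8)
The plan is to mimic exactly the structure of the proof of Lemma~\ref{lem:alg-linfinity-Drehung-zu-Kantenparallele}, which handled the algebraic case, translating every vertical quantity into its orthogonal counterpart. First I would record the elementary fact that $\langle\tilde p_k,n\rangle = -\langle\tilde p_l,n\rangle$ for \emph{every} $n\in S^1$, which is immediate from $\tilde p_k = (p_k-p_l)/2 = -\tilde p_l$; hence $|\langle\tilde p_k,n\rangle| = |\langle\tilde p_l,n\rangle|$ identically, so the two points $p_k$ and $p_l$ always contribute the same value to the maximum defining $f(n)$.

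Next I argue by contradiction: suppose that $|\langle\tilde p_j,n_0\rangle| < |\langle\tilde p_k,n_0\rangle| = f(n_0)$ for all $j\ne k,l$. Then in a neighbourhood of $n_0$ the maximum defining $f$ is attained only by the two indices $k$ and $l$, and since they coincide in value we have, using the parametrization $n(t) = (\cos t,\sin t)^T$ with $n_0 = n(t_0)$, that $h(t):=f(n(t)) = |\langle\tilde p_l,n(t)\rangle|$ for $t$ near $t_0$. Here we may assume $\langle\tilde p_l,n_0\rangle = f(n_0) > 0$, so near $t_0$ we in fact have $h(t) = \langle\tilde p_l,n(t)\rangle$, a smooth function, and $h'(t_0) = \langle\tilde p_l, Jn_0\rangle = \tilde p_l^T J n_0$. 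For $n_0$ to furnish the global (hence a local) minimum of $f$ on $S^1$ we would need $h'(t_0)=0$, i.e.\ $\tilde p_l \perp Jn_0$, which (since $Jn_0\perp n_0$ in the plane) forces $\tilde p_l$ to be a scalar multiple of $n_0$. But then $f(n_0) = |\langle\tilde p_l,n_0\rangle| = \|\tilde p_l\|\,\|n_0\| = \|\tilde p_l\| = \|p_k-p_l\|/2$, and the line through $(p_k+p_l)/2$ with normal $n_0$ is exactly the perpendicular bisector direction making $g$ pass through neither $p_k$ nor $p_l$ at distance $f(n_0)$; one checks this is not a minimum by exhibiting a rotation that strictly decreases $f$, exactly as in Lemma~\ref{lem:geo-l1-Drehung-in-zwei-Punkte} and Lemma~\ref{lem:alg-linfinity-Drehung-zu-Kantenparallele} — intuitively, tilting the line slightly toward being parallel to the segment $p_kp_l$ lowers the larger of the two equal distances.

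The point I expect to require the most care is the non-differentiable transition: a priori the maximum at $n_0$ could be attained simultaneously by $k$, $l$ and some third index $j$ with $|\langle\tilde p_j,n_0\rangle| = f(n_0)$, which is precisely the conclusion we want, so that case is not an obstacle — the real work is confirming that in the \emph{opposite} case ($J_0$ of the rotation reduced to the single relevant pair) the function $h$ is genuinely differentiable at $t_0$ with nonzero derivative, which is the statement $h'(t_0) = \tilde p_l^T J n_0 \ne 0$; this fails only if $\tilde p_l \parallel n_0$, the case just disposed of. I would therefore organize the write-up as: (i) the identity $|\langle\tilde p_k,n\rangle| = |\langle\tilde p_l,n\rangle|$; (ii) assuming no third index achieves the max, deduce $f$ is differentiable near $n_0$ with $h'(t_0) = \langle\tilde p_l,Jn_0\rangle$; (iii) a minimum forces $\tilde p_l\parallel n_0$, whence $f(n_0)=\|\tilde p_l\|$; (iv) contradict minimality by a direction producing a smaller value, citing the same mechanism as in the $L^1$ geometric case. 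The only genuinely new ingredient over the cited lemmas is bookkeeping the $\max$ rather than a sum, but since only the two tied indices survive near $n_0$ this reduces to a single-term computation.
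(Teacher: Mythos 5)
Your proposal is correct and follows essentially the same route as the paper: the identity $\langle\tilde p_k,n\rangle=-\langle\tilde p_l,n\rangle$, differentiability of $h(t)=f(n(t))$ when no third index ties the maximum, and the observation that $h'(t_0)=\tilde p_l^TJn_0=0$ forces $\tilde p_l=\lambda n_0$ with $\lambda>0$. The paper makes your step (iv) precise by computing $h''(t_0)=\tilde p_l^TJJn_0=-\lambda<0$, so that $t_0$ is a strict local maximum of $h$ rather than a minimum — which is exactly the "tilting decreases $f$" mechanism you describe, since near $t_0$ one has $h(t)=\lambda\cos(t-t_0)$.
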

\begin{proof}
Note that $\langle \tilde{p}_l,n\rangle = -\langle \tilde{p}_k,n\rangle$ for all $n\in S^1$, 
since $\tilde{p}_k= -\tilde{p}_l$.
If the inequality $|\langle \tilde{p}_j,n(t_0)\rangle|< f(n(t_0))$ holds for all $j\ne k,l$, 
then the function $h(t):=f(n(t))$ given by the parametrization  $n(t) = (\cos t,\sin t)^T$ 
is differentiable at $t_0$.
Note that $l\in J_+$. Now, $h'(t_0) = \tilde{p}_l^T J n(t_0) = 0$ if and only if 
there exists $\lambda >0$ such that $\tilde{p}_l = \lambda n$.
If $h'(t_0) = 0$, then $h''(t_0) = \tilde{p}_l^T JJn(t_0) = -\lambda<0$, since $n\in S^1$ and $J^2 = -\Id$.
Consequently, if $|\langle \tilde{p}_j,n(t)\rangle|< h(t)$ for all $j\ne k,l$, 
then $h(t)$ is not a local minimum. (see Figure \ref{fig:LinfDreh})
\end{proof}
\subsubsection{Summary of the geometric $L^\infty$-distance}
Using the notation of subsection \ref{subsubsec:Zusammenfassung-max-alg} for the convex hull 
and its edges and vertices we characterize lines with minimal geometric $L^\infty$-distance.
\begin{theorem}
\label{satz:geo-Linfty-Optimenge}
Let $p_j\in\R^2$, $j=1,\ldots,m$, be pairwise distinct points.\\
There exists a line with minimal geometric $L^\infty$-distance to the set $\{p_1,\ldots,p_m \}$. 
If $0<f(c_0,n_0)=\min\{ f(c,n):c\in\R, n\in S^1\}$, 
then there exists pairwise distinct indices $k_1,k_2,k_3 \in V$ such that 
$f(c_0,n_0) = |\langle p_{k_i},n_0\rangle - c_0|$ for $i=1,2,3$ and $S_{k_1k_2}\subset \partial P$.
Moreover,
\begin{align*}
\min_{c\in\R, n\in S^1} f(c,n) & = \min_{c\in\R, n\in S^1} \max_{j=1,\ldots,m} |\langle p_j,n\rangle -c|
	= \min_{c\in\R} \max_{p_j\in V} |\langle p_j,n\rangle -c|\\
& = \frac{1}{2} \min_{S_{kl}\in \partial{P}} \max_{p_j\in V}
	\frac{|\langle p_j-p_k,J(p_l-p_k)\rangle |}{\| p_l-p_k\| }.
\end{align*}
In particular, the set of optimal lines is finite.
\end{theorem}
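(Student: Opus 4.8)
The plan is to follow the scheme used for the algebraic $L^\infty$-distance in Theorem \ref{satz:alg-Linfty-Optimenge}, with rotations of the normal vector in place of the affine bookkeeping. Existence of a global minimum is already available (shown above: $f$ is continuous, $S^1$ is compact, and only finitely many midpoints $(p_k+p_l)/2$ occur), so I put $d:=\min\{f(c,n):c\in\R,\ n\in S^1\}$ and split into $d=0$ and $d>0$. If $d=0$, some line carries all of $p_1,\dots,p_m$; since $m\ge 2$ and the points are distinct this line is unique, $P$ degenerates to a segment $S_{kl}$, every $p_j$ lies on the line through $p_k$ and $p_l$ so $\langle p_j-p_k,J(p_l-p_k)\rangle=0$, and hence all four expressions in the display are $0$ while the set of optimal lines is a single line; the three-vertex assertion is then vacuous. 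So the real content is the case $d>0$.

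For $d>0$ I would first establish the three-vertex picture at an optimal pair $(c_0,n_0)$. Corollary \ref{folg:geo-linfinity-Verschiebung-in-Mittelpunkt} supplies indices $k\neq l$ with $\langle p_l,n_0\rangle-c_0=c_0-\langle p_k,n_0\rangle=d$ and $\langle(p_k+p_l)/2,n_0\rangle=c_0$; thus $g_0:=\{q:\langle q,n_0\rangle=c_0\}$ passes through $(p_k+p_l)/2$, the lines $\ell_\pm:=\{q:\langle q,n_0\rangle=c_0\pm d\}$ support $P$, $|\langle p_j,n_0\rangle-c_0|\le d$ for every $j$, and $p_l\in\ell_+$, $p_k\in\ell_-$. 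Since $g_0$ is optimal among all lines it is in particular optimal among lines through $(p_k+p_l)/2$, so Lemma \ref{lem:geo-linfinity-Drehung-zu-Kantenparallele} applies (its hypotheses hold with value $d=\langle\tilde p_l,n_0\rangle=-\langle\tilde p_k,n_0\rangle>0$, where $\tilde p_j:=p_j-(p_k+p_l)/2$) and yields $j\neq k,l$ with $|\langle p_j,n_0\rangle-c_0|=d$, i.e.\ $p_j\in\ell_+\cup\ell_-$. Therefore one of the supporting lines, say $\ell_+$, carries two distinct points of $\{p_1,\dots,p_m\}$; then $P\cap\ell_+$ is a nondegenerate segment whose endpoints are extreme points of $P$, hence vertices $p_{k_1},p_{k_2}$ with $k_1,k_2\in V$, and $S_{k_1k_2}=P\cap\ell_+\subset\partial P$ is parallel to $g_0$ with $|\langle p_{k_i},n_0\rangle-c_0|=d$; taking $k_3$ to be an endpoint of $P\cap\ell_-$ gives $k_3\in V$, $k_3\neq k_1,k_2$ (because $\ell_+\cap\ell_-=\emptyset$) and $|\langle p_{k_3},n_0\rangle-c_0|=d$.

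The displayed chain then follows. The first equality is the definition of $f$; for the second, $q\mapsto|\langle q,n\rangle-c|$ is convex, so its maximum over the polytope $P$ is attained at a vertex, giving $\max_j|\langle p_j,n\rangle-c|=\max_{p_j\in V}|\langle p_j,n\rangle-c|$ for all $(c,n)$. For ``$\le$'' in the last equality, for each edge $S_{kl}$ of $P$ I take the line parallel to $S_{kl}$ at distance $M/2$ from it towards $P$, where $M:=\max_{p_j\in V}|\langle p_j-p_k,J(p_l-p_k)\rangle|/\|p_l-p_k\|$ is the largest distance of a vertex to the line carrying the edge; this line has $f$-value $\le M/2$, so $d\le\frac12 M$, and minimizing over edges bounds $d$ above by $\frac12\min_{S_{kl}}(\dots)$. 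For ``$\ge$'' I use the structural picture: all of $P$ lies in the width-$2d$ strip between $\ell_+$ and $\ell_-$, so every vertex has distance $\le 2d$ from the line through $S_{k_1k_2}=P\cap\ell_+$ while $p_{k_3}\in\ell_-$ attains $2d$; hence that maximum over $V$ equals $2d$ for this particular edge, so $\frac12\min_{S_{kl}}(\dots)\le d$. Finiteness is immediate: every optimal line is parallel to some edge $S_{k_1k_2}$ of $P$ and lies at distance $d=\min f$ from the line carrying it on the $P$-side, so it is determined by the edge, and $P$ has only finitely many edges.

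I expect the one delicate step to be the passage ``three vertices at distance $d$ $\Rightarrow$ an edge $S_{k_1k_2}\subset\partial P$ parallel to $g_0$'': one has to note, by a pigeonhole argument, that two of the three vertices at distance $d$ from $g_0$ lie on the same supporting line $\ell_\pm$, and then check that the intersection of $P$ with that supporting line is a genuine edge whose endpoints are among the given points. Everything else is the same convexity bookkeeping that already appeared in the algebraic case.
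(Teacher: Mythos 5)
Your proposal is correct and takes essentially the same route as the paper: existence from continuity, compactness of $S^1$ and the finitely many midpoints, then Corollary \ref{folg:geo-linfinity-Verschiebung-in-Mittelpunkt} (translation to a midpoint) and Lemma \ref{lem:geo-linfinity-Drehung-zu-Kantenparallele} (rotation) to force three extremal points, followed by the edge--vertex bookkeeping that yields the displayed formula and finiteness. You only spell out details the paper leaves implicit, namely the pigeonhole placing two of the three extremal points on one supporting line, the identification of $P\cap\ell_\pm$ as an edge with vertex endpoints, and the degenerate collinear case $f\equiv 0$.
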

\begin{proof}
If $p_1,\ldots,p_m$ are not collinear, 
then Lemma \ref{lem:geo-linfinity-Drehung-zu-Kantenparallele} and
Corollary \ref{folg:geo-linfinity-Verschiebung-in-Mittelpunkt} imply the following properties of a line $g$
with minimal geometric $L^\infty$-distance to the set  $\{p_1,\ldots,p_m \}$:
The line $g$ is parallel to an edge $S_{k_1k_2}$ of the polytope $P$. 
The geometric distance between any vertex in $V$ and $g$ is less or equal to the geometric distance
between $S_{k_1k_2}$ and $g$.
There exists a vertex $p_{k_3}\in E$ such that $p_{k_3}\not\in S_{k_1k_2}$ and 
the geometric distance between $g$ and $p_{k_3}$ is equal to that between $g$ and $S_{k_1k_2}$.
Thus, the minimum of $f$ is half of the geometric distance between $S_{k_1k_2}$ and $p_{k_3}$.

Since $n=J(p_l-p_k) \|p_l-p_k\|^{-1}$ is a normal vector of the line through the points $p_k\ne p_l$,
the geometric distance between $p_j$ and $S_{kl}$ is given by 
$|\langle p_j-p_k,J(p_l-p_k)\rangle | \| p_l-p_k\|^{-1}$. 
\end{proof}
\begin{rem}
The geometric, just as the algebraic, $L^\infty$-distance to a point set $\{p_1,\ldots,p_m\}$ depends
only on the convex hull of the points $p_1,\ldots,p_m$.
Hence, Theorem \ref{satz:geo-Linfty-Optimenge} remains true
if the condition $p_j\ne p_k$ for all $j\ne k$ is weakened to the existence two indices $j\ne k$ such that
$p_j\ne p_k$.
If $p_1=\ldots=p_m=\bar{p}$, then $P= V =\{\bar{p}\}$ and any line through $\bar{p}$ has minimal distance zero.
\end{rem}
\subsubsection{Examples}
\begin{figure}
\noindent
\begin{minipage}[t]{.50\linewidth}
  \mbox{\scalebox{.27}{\includegraphics{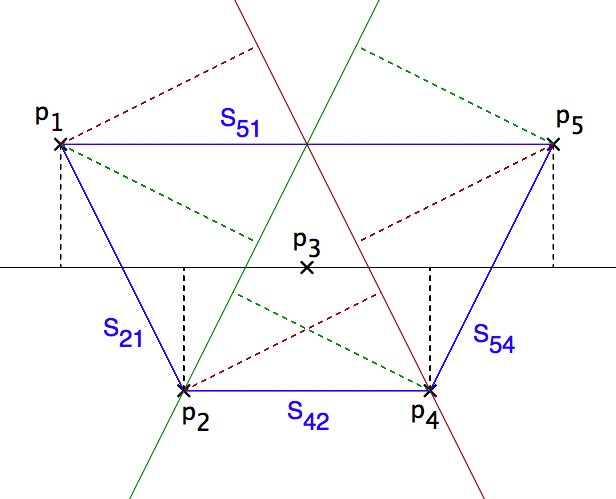}}}
\caption{}
\label{fig:geolinfm5}
\end{minipage}
\begin{minipage}[t]{.49\linewidth}
  \mbox{\scalebox{.27}{\includegraphics{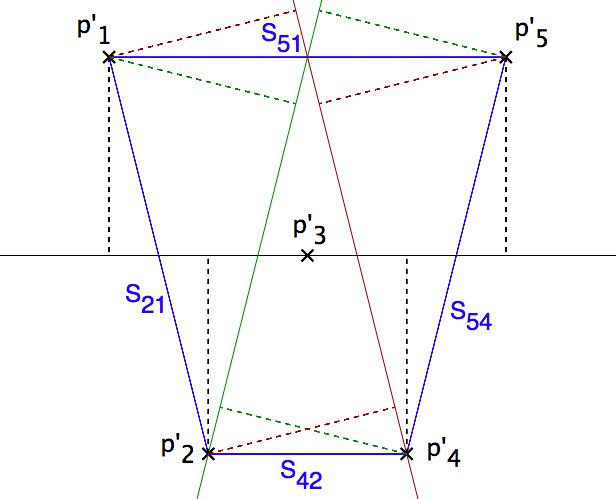}}}
\caption{}
\label{fig:geolinfm5s}
\end{minipage}
\end{figure}
\paragraph{Three points:}
We consider three pairwise distinct points $p_1, p_2, p_3 \in \R^2$ 
as in the first example of subsection \ref{subsubsec:Beispiele-geo-l1}. 
If $p_1,p_2,p_3$ are not collinear, then $V=\{ p_1,p_2,p_3\}$. 
A line $g$ has minimal geometric $L^\infty$-distance to $\{ p_1,p_2,p_3\}$ if and only if
$g$ is parallel to the longest side $s$ of the triangle 
and the geometric distance between $s$ and $g$ is equal to 
the geometric distance between $g$ and the point opposite to $s$.
\paragraph{Four points:}
Let us consider the four points $p_1=(0,0)^T$, $p_2=(2,0)^T$, $p_3=(3,0)^T$ and $p_4=(2,3)^T$ 
(see Figure \ref{fig:geolinfm4}). Obviously, $V=\{p_1,p_3,p_4\}$. 
The edge $S_{41}$ is the unique longest side of the triangle $p_1p_3p_4$. 
There exists a unique line with minimal geometric $L^\infty$-distance to $\{p_1,\ldots,p_4\}$.
This optimal line is parallel to $S_{41}$ and has the same geometric distance to $S_{41}$ and to $p_3$. 
Compare this line to the lines with minimal algebraic $L^\infty$-distance (Figure \ref{fig:alglinfm4p})
and with minimal geometric $L^1$-distance (Figure \ref{fig:geol1m4}),
they are all different.
\paragraph{Invariance under reflection symmetry:}
Let us consider the five points $p_1=(-2,1)^T$, $p_2=(-1,-1)^T$, $p_3=(0,0)^T$, $p_4=(1,-1)^T$ and 
$p_5=(2,1)^T$ as in the subsections \ref{subsubsec:Beispiele-alg-l1}, \ref{subsubsec:Beispiele-geo-l1} and \ref{subsubsec:Beispiele-alg-linfty} and Figure \ref{fig:algl1m5}.
The convex hull of the five points is a trapezium with $V=\{ p_1,p_2,p_4,p_5\}$ 
(see Figure \ref{fig:geolinfm5}). 
We denote the geometric distance between the vertex $p_j$ and the edge $S_{kl}$ by $d(S_{kl},p_j)$. 
The edges $S_{51}$ and $S_{42}$ are parallel.
It is easy to see that $d(S_{42},p_5) = d(S_{42},p_1) = d(S_{51},p_2) = d(S_{51},p_4) = 2$.
The set $\{p_1,\ldots,p_5\}$ is invariant under reflection in the $y$-axis.
Hence, $d(S_{21},p_5) = d(S_{54},p_1) = 2 d(S_{21},p_4) = 2d(S_{54},p_2)$.
Using the formula given in Theorem \ref{satz:geo-Linfty-Optimenge} we calculate
$$d(S_{21},p_5) = \frac{|\langle p_5-p_2,J(p_1-p_2) \rangle |}{ \| p_1-p_2\|}
	= \frac{|(3,2)J(-1,2)^T |}{ \sqrt{5}} = \frac{8}{\sqrt{5}} >2 .$$
Thus, the unique line with minimal geometric $L^\infty$-distance to the point set $\{ p_1,\ldots,p_5\}$ is
given by the equation $y=0$.

In this example the line with minimal geometric $L^\infty$-distance is unique and coincides with the
line with minimal algebraic $L^\infty$-distance.
In contrast to Corollary \ref{folg:algLinfSpiegelsymmetrie},
the invariance under reflection in the line $x=\bar{x}$
does not imply the existence of an optimal line of the form $y=\bar{y}$ 
for the geometric $L^\infty$-distance, since the optimal line is not unique.
Scaling the $y$-coordinate makes this different behavior more obvious.

We consider the linear map $A$ given by $(x,y)\mapsto (x,\lambda y)$ 
with $0<\lambda \in\R$ and $p'_j:=A(p_j)$.
Now $p'_1=(-2,\lambda)^T$, $p'_2=(-1,-\lambda)^T$, $p'_3=(0,0)^T$, $p'_4=(1,-\lambda)^T$ 
and $p'_5=(2,\lambda)^T$.
The set $\{p'_1,\ldots,p'_5\}$ is invariant under reflection in the $y$-axis  for all $\lambda\in\R$. 
Hence, the linear function $y(x)\equiv 0$ has minimal algebraic $L^\infty$-distance to the set
$\{p'_1,\ldots,p'_5\}$, since $\bar{p}' = (0,0)$ for all $\lambda\in\R$.
Similar to the calculations above we obtain $V=\{p'_1,p'_2,p'_4,p'_5\}$,  
$d(S_{42},p'_1) = d(S_{42},p'_5) = d(S_{51},p'_2) = d(S_{51},p'_4) = 2\lambda$,
$d(S_{21},p'_5) = d(S_{54},p'_1) = 2 d(S_{21},p'_4) = 2d(S_{54},p'_2)$ and
$$d(S_{21},p'_5) = \frac{|\langle p'_5-p'_2,J(p'_1-p'_2) \rangle |}{ \| p'_1-p'_2\|}
= \frac{|(3,2\lambda)J(-1,2\lambda)^T |}{ \sqrt{1+4\lambda^2}} = \frac{8\lambda}{\sqrt{1+4\lambda^2}}.$$
Figure \ref{fig:geolinfm5s} shows the situation for the case $\lambda=2$.
For
$$d(S_{42},p'_1) < d(S_{21},p'_5) \Leftrightarrow \sqrt{1+4\lambda^2}<4 \Leftrightarrow 
	|\lambda| < \frac{\sqrt{15}}{2},$$
the set of lines with minimal geometric $L^\infty$-distance to the set $\{p'_1,\ldots,p'_5\}$
depends on $\lambda>0$ as follows:
For $\lambda>\sqrt{15}/2$ there are exactly two optimal lines, these are $y=\lambda\pm 2\lambda x$.
For $0 < \lambda<\sqrt{15}/2$ the line  $y=0$ is the unique optimal line.
For $\lambda = \sqrt{15}/2$ there are exactly three optimal lines, 
these are $y=0$ and $y=\lambda\pm 2\lambda x$. 
\section{$L^p$-Norm}
In this section we want to minimize the $L^p$-norm of the vector $(d_1,\ldots,d_m)$.
We consider the function $f(g) = \sum_{j=1}^m d_j^p$ for $p>1$.
This function is continuously differentiable, 
since the function $\R\to\R$ defined by $x\mapsto |x|^p$ has this property for $p>1$.
The case $p=2$ has been discussed in section \ref{sec:L2}.

We show that the function $f$ admits a global minimum for every $p>1$.
For $p\ne 2$ the critical points of $f$ are the solutions of a system of nonlinear equations,
which, in general,  can not be explicitly resolved by a closed formula.

The function $f(g)$ is convex in some of the parameters of the line $g$.
We apply the following lemmas to prove the existence of a global minimum and  
properties of the set of optimal lines in the succeeding subsections.
Except for a few special examples, e.g., $m=3$ or symmetries of the point set, 
it is impossible to provide explicit formulas of $L^p$-optimal lines.
Therefore, the global minimum of $f$ can be determined in most of the situations only numerically.
\begin{lemma}
\label{lem:Lp-Existenz-globMin}
Let $\xi_j\in\R^n$, $\eta_j\in\R$ and $p\geq 1$. \\
If $\cap_{j=1}^m \xi_j^\perp = \cap_{j=1}^m \{ x \in \R^n: \xi_j^Tx=0\} = \{ 0\}$, 
then the function $f:\R^n\to\R$ defined by $f(x) = \sum_{j=1}^m |\xi_j^T x +\eta_j|^p$ 
admits a global minimum.
\end{lemma}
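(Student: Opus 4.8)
The plan is to show that $f$ is continuous and \emph{coercive}, i.e.\ $f(x)\to\infty$ as $\|x\|\to\infty$, and then to conclude that $f$ attains its infimum on a sufficiently large closed ball. First I would record that $f$ is continuous, being a finite sum of compositions of the continuous map $t\mapsto|t|^p$ with the affine maps $x\mapsto\xi_j^Tx+\eta_j$; for $p\ge 1$ each summand is in addition convex, so $f$ is convex, but convexity alone will not be needed for this lemma.

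Next I would exploit the hypothesis $\bigcap_{j=1}^m\xi_j^\perp=\{0\}$: it says precisely that no nonzero vector is orthogonal to all the $\xi_j$, equivalently that $\xi_1,\dots,\xi_m$ span $\R^n$. Hence the function $N\colon\R^n\to\R$, $N(x):=\max_{1\le j\le m}|\xi_j^Tx|$, is a norm — it is visibly a seminorm, and $N(x)=0$ forces $x\in\bigcap_j\xi_j^\perp=\{0\}$. Since all norms on $\R^n$ are equivalent, there is a constant $c>0$ with $N(x)\ge c\,\|x\|$ for all $x$. (Alternatively, the symmetric matrix $A:=\sum_{j=1}^m\xi_j\xi_j^T$ satisfies $x^TAx=\sum_j(\xi_j^Tx)^2>0$ for $x\ne 0$, so $A$ is positive definite, and one may extract $c$ from its smallest eigenvalue via $m\,N(x)^2\ge\sum_j(\xi_j^Tx)^2\ge\lambda_{\min}\|x\|^2$.)

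With $\eta:=\max_{1\le k\le m}|\eta_k|$ fixed, I would then, for each $x$, choose an index $j=j(x)$ with $|\xi_j^Tx|=N(x)$, so that
$$f(x)\ \ge\ |\xi_j^Tx+\eta_j|^p\ \ge\ \bigl(|\xi_j^Tx|-|\eta_j|\bigr)^p\ \ge\ \bigl(c\,\|x\|-\eta\bigr)^p$$
whenever $c\,\|x\|\ge\eta$; the right-hand side tends to $\infty$ with $\|x\|$, which is coercivity. Finally, picking $R>0$ so large that $f(x)>f(0)$ for all $\|x\|>R$, the closed ball $\overline{B}_R=\{x:\|x\|\le R\}$ is compact and $f$ continuous, so $f$ attains a minimum on $\overline{B}_R$ at some $x_0$; since $f(x_0)\le f(0)<f(x)$ for $\|x\|>R$, this $x_0$ is a global minimum of $f$ on $\R^n$.

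The only step that requires genuine thought is the passage from the spanning condition $\bigcap_j\xi_j^\perp=\{0\}$ to the linear lower bound $N(x)\ge c\,\|x\|$ — that is, the equivalence-of-norms step, or equivalently the positive definiteness of $\sum_j\xi_j\xi_j^T$; everything after that (the telescoping estimate, coercivity, and extraction of the minimizer on a ball) is routine.
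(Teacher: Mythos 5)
Your proof is correct, but it takes a genuinely different route from the paper's. You establish coercivity directly on $\R^n$: the hypothesis $\bigcap_{j}\xi_j^\perp=\{0\}$ makes $N(x)=\max_j|\xi_j^Tx|$ a genuine norm, equivalence of norms (or positive definiteness of $\sum_j\xi_j\xi_j^T$) gives $N(x)\ge c\|x\|$, and the reverse triangle inequality yields $f(x)\ge(c\|x\|-\eta)^p$ once $c\|x\|\ge\eta$; a compact ball then finishes. The paper instead homogenizes: it sets $\tilde{x}=(x,1)$, $\tilde{\xi}_j=(\xi_j,\eta_j)$, writes $f(x)=\|\tilde{x}\|^p\sum_j|\tilde{\xi}_j^T(\tilde{x}/\|\tilde{x}\|)|^p$, and minimizes the homogeneous factor over the unit sphere of $\R^{n+1}$, splitting into cases according to whether that minimum $m$ is zero or positive. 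If $m>0$ it obtains a coercive bound of the same flavor as yours; if $m=0$, the hypothesis forces the minimizing direction $(x_0,\lambda)$ to have $\lambda\ne 0$, so $f(x_0/\lambda)=0$ is actually attained. Your argument avoids the case distinction entirely and produces an explicit coercivity rate; the paper's lift to $\R^{n+1}$ pinpoints exactly where the hypothesis enters (ruling out a direction at infinity on which the homogenized function vanishes) and reuses the compactness-on-the-sphere device that appears elsewhere in the paper for the geometric distances. Both arguments are complete.
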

\begin{proof}
The function $f$ is continuous. Set $\tilde{x}:=(x,1)$ and $\tilde{\xi}_j:=(\xi_j,\eta_j)$.
Now $f(x) = \sum_{j=1}^m |\tilde{\xi}_j^T\tilde{x}|^p$ and
$$f(x) = \| \tilde{x}\|^p  \sum_{j=1}^m \left|\tilde{\xi}_j^T\frac{\tilde{x}}{\|\tilde{x}\|}\right|^p
	\text{ for } \tilde{x}\ne 0.$$
The function $\tilde{x}\mapsto  \sum_{j=1}^m |\tilde{\xi}_j^T\tilde{x}|^p$ is continuous and 
admits a global minimum $m\geq 0$ on the compact set $\{ \|\tilde{x}\| = 1 \} \subset \R^{n+1}$.

If $m=0$, then there exist $\tilde{x}_0=(x_0,\lambda)$ with $x_0\in\R^n$ and $\lambda\in\R$ such that
$0= \tilde{\xi}_j^T\tilde{x}_0 = \xi^T_jx_0+\eta_j\lambda$ for all $j$ and $\|x_0^2\|^2+\lambda^2=1$.
The assumption $\lambda=0$ would imply $x_0\ne 0$ and $\xi_j^Tx_0=0$ for all $j$ contradicting 
$\cap_{j=1}^m \xi_j^\perp = \{ 0\}$. 
Hence, $\lambda\ne 0$ and $0=f(x_0/\lambda)$ is the global minimum of the function $f$.

If  $m>0$, then
$f(x) \geq (\|x\|^2+1)^{p/2} m \geq f(0) \geq 0$ for all $x$ satisfying $\|x\|^2 \geq (f(0)/m)^{2/p} -1=:R$.
Continuity of $f$ and compactness of the set $\{ \| x\|^2\leq R\} \subset \R^n$ 
yield the existence of a global minimum.
\end{proof}
\begin{lemma}
\label{lem:Lp-Eindeutigkeit-globMin}
Let $f:\R^n\to\R$ be defined by $f(x) =  \sum_{j=1}^m |\xi_j^T x +\eta_j|^p$ with
$\eta_j\in\R$, $p> 1$ and $\xi_j\in\R^n$ satisfying 
$\cap_{j=1}^m \xi_j^\perp = \cap_{j=1}^m \{ v \in \R^n: \xi_j^Tv=0\} = \{ 0\}$.\\
If $x_0,x_1\in\R^n$ such that $f(x_0)=f(x_1)=\min\{ f(x):x\in\R^n\}$, then $x_0=x_1$.
\end{lemma}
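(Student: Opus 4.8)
The plan is to exploit the strict convexity of $t\mapsto |t|^p$ for $p>1$. First I would observe that each summand $x\mapsto |\xi_j^Tx+\eta_j|^p$ is the composition of an affine map with the convex function $t\mapsto|t|^p$, hence convex; therefore $f$ is convex, and in particular its set of global minimizers is a convex subset of $\R^n$.

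Suppose now that $x_0,x_1\in\R^n$ both attain the global minimum $\mu:=\min\{f(x):x\in\R^n\}$. Set $x_*:=\tfrac12(x_0+x_1)$ and abbreviate $a_j:=\xi_j^Tx_0+\eta_j$ and $b_j:=\xi_j^Tx_1+\eta_j$, so that $\xi_j^Tx_*+\eta_j=\tfrac12(a_j+b_j)$. Convexity of $t\mapsto|t|^p$ gives the inequality $\bigl|\tfrac12(a_j+b_j)\bigr|^p\le\tfrac12|a_j|^p+\tfrac12|b_j|^p$ for every $j$, and summing over $j$ yields $f(x_*)\le\tfrac12 f(x_0)+\tfrac12 f(x_1)=\mu$. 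Since $\mu$ is the minimum we also have $f(x_*)\ge\mu$, so equality holds throughout, and consequently equality must hold in each of the $m$ summand inequalities.

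Because $p>1$, the function $t\mapsto|t|^p$ is strictly convex, so equality in $\bigl|\tfrac12(a_j+b_j)\bigr|^p=\tfrac12|a_j|^p+\tfrac12|b_j|^p$ forces $a_j=b_j$; that is, $\xi_j^T(x_1-x_0)=0$ for all $j=1,\ldots,m$. Hence $x_1-x_0\in\bigcap_{j=1}^m\xi_j^\perp=\{0\}$ by hypothesis, i.e.\ $x_0=x_1$, which is the claim.

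I expect no serious obstacle here; the only points that need care are the invocation of the strict convexity of $t\mapsto|t|^p$ for $p>1$ and of the equality case of Jensen's inequality (equivalently, that a strictly convex function is not affine on any nondegenerate segment), both of which are recorded in the appendix. One could equally run the argument along the whole segment $x_t=(1-t)x_0+tx_1$, noting that $f(x_t)\equiv\mu$ forces each convex function $t\mapsto|a_j+t(b_j-a_j)|^p$ to be affine on $[0,1]$ and hence $b_j=a_j$; the midpoint version above is just the shortest route to the same conclusion.
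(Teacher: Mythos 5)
Your argument is correct, but it is not the route the paper takes, and it is in fact the cleaner of the two. The paper's proof works along the whole segment $x_t=tx_1+(1-t)x_0$ of minimizers: it isolates the index set $I$ of summands whose affine forms vanish identically on the segment, restricts $f$ to the affine subspace $K$ cut out by those forms, observes that the remaining forms vanish at only finitely many points of the segment, and then derives a contradiction from the positivity of the second directional derivative $h''(0)=p(p-1)\sum_{j\notin I}|\xi_j^Tx+\eta_j|^{p-2}(\xi_j^Tv)^2$ at a point where all remaining forms are nonzero (positivity being exactly where the hypothesis $\cap_j\xi_j^\perp=\{0\}$ enters). That detour through $I$, $K$ and $U$ exists precisely because for $p<2$ the function is not twice differentiable where a summand vanishes. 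Your proof sidesteps all of this by invoking the equality case of the convexity inequality summand by summand: equality at the midpoint forces $a_j=b_j$, i.e.\ $\xi_j^T(x_1-x_0)=0$ for every $j$, and the hypothesis finishes it. This is shorter, avoids any differentiability discussion, and uses the hypothesis in the same place. The one point you should make explicit is the strict convexity of $t\mapsto|t|^p$ on all of $\R$ (not just on $[0,\infty)$, which is what the appendix actually records): if $|a_j|\neq|b_j|$ it follows from the strict convexity and monotonicity of $s\mapsto s^p$ on $[0,\infty)$ applied after the triangle inequality, and if $|a_j|=|b_j|$ with $a_j\neq b_j$ then $a_j=-b_j\neq 0$ and the midpoint value $0$ is strictly below $\tfrac12|a_j|^p+\tfrac12|b_j|^p$. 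With that one-line supplement your proof is complete.
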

\begin{proof}
For $f$ is convex, $f(x_t)=f(x_0)=f(x_1)$ for all $x_t=tx_1+(1-t)x_0$ with $0\leq t\leq 1$.
We define $I:=\{ j: \xi_j^Tx_0+\eta_j=\xi_j^Tx_1+\eta_j=0\} \subset \{ 1,\ldots,m\}$ and 
$K:=\cap_{j\in I}\{ x:\xi_j^Tx+\eta_j=0 \} \subset \R^n$. 
For $K$ is the set of solutions of a system of linear equations, $K$ is an affine subspace of $\R^n$.
Thus, $K$ is  a convex set. In particular, $x_t\in K$ for all $0\leq t\leq 1$.

The restriction of $f$ to $K$ is given by $f|_K(x) = \sum_{j\not\in I} |\xi_j^Tx+\eta_j|^p$.
The functions $f$ and $f|_K$ are twice continuously differentiable for $p\geq 2$. 
But for $p<2$ the restriction $f|_K$ is twice continuously differentiable
only on the set open subset $U:=\cap_{j\not\in I}\{ x \in K: \xi_j^Tx+\eta_j \ne 0\}  \subset K$. 

If $x_{t_0}\ne x_{t_1}$ and $\xi_{j_0}^Tx_{t_0}=\xi_{j_1}^Tx_{t_1} =0$ for  $j_0,j_1\not\in I$,
then $j_0\ne j_1$. 
Since there are only finitely many indices, the intersection $U\cap\{ x_t:1\leq t\leq 1\}$ 
is an open nonempty subset of $\{ x_t:1\leq t\leq 1\}$.

Given $x\in K$ and $v\in\R^n$, the sum $x+v$ is an element of $K$ if and only if
$\xi_j^Tv=0$ for all $j\in I$.
The set $V:=\cap_{j\in I} \{ w\in\R^n:\xi_j^Tw=0\}$ is a vector space.
For any $x\in U$ and $0\ne v\in V$ we consider the function $h(t) :=f(x+tv)$. It holds
$$h''(0) = p(p-1)\sum_{j\not\in I} |\xi_j^Tx+\eta_j|^{p-2} (\xi_j^Tv)^2>0.$$
This means that $U\cap\{ x_t:1\leq t\leq 1\} \ne\emptyset$ contains exactly one point.
Thus, $x_0=x_1$.
\end{proof}
\subsection{Minimal algebraic $L^p$-distance}
Given points $p_j=(x_j,y_j)^T\in \R^2$, $j=1,\ldots,m$, we want to determine the linear function 
$y(x)=ax+b$, $a,b\in\R$, with minimal algebraic $L^p$-distance to the set $\{Êp_1,\ldots,p_m\}$, 
i.e., the minimum of the function
$$f:\R^2\to\R, \quad f(a,b)=\sum_{j=1}^m  | y_j-(ax_j+b)| ^p.$$
As for the investigation of the algebraic $L^1$-, $L^2$- and $L^\infty$-distance we additionally assume
that  $x_1,\ldots,x_m$ are pairwise distinct.

The function $f$ is continuous, continuously differentiable and convex.
Defining the decomposition $J_+:=\{ j:y_j>y(x_j)\}$, $J_0:=\{  j:y_j=y(x_j)\}$ and $J_-:=\{  j:y_j<y(x_j)\}$
for every linear function $y(x) = ax+b$ we obtain the partial derivations
\begin{align*}
f_b(a,b) & = \sum_{j\in J_-} p|y_j-ax_j-b|^{p-1}-\sum_{j\in J_+} p|y_j-ax_j-b|^{p-1} \\
f_a(a,b) & = \sum_{j\in J_-} px_j|y_j-ax_j-b|^{p-1}-\sum_{j\in J_+} px_j|y_j-ax_j-b|^{p-1}.
\end{align*}
Thus, $f(a,b)$ is a global minimum  if and only if
\begin{equation}
\label{eq:alg-Lp-fa}
\sum_{j\in J_-} |y_j-ax_j-b|^{p-1} = \sum_{j\in J_+} |y_j-ax_j-b|^{p-1} 
\end{equation}
and
\begin{equation}
\label{eq:alg-Lp-fb}
\sum_{j\in J_-} x_j|y_j-ax_j-b|^{p-1} = \sum_{j\in J_+} x_j|y_j-ax_j-b|^{p-1} .
\end{equation}
If $p=2n$ with $n\in\N$ and $n\geq 1$, then $f(a,b) = \sum_{j=1}^m (y_j-ax_j-b)^{2n}$,
$f_a(a,b) = p\sum_{j=1}^mx_j(y_j-ax_j-b)^{2n-1}$ and  $f_b(a,b) = p\sum_{j=1}^m(y_j-ax_j-b)^{2n-1}$.
In this case the equations (\ref{eq:alg-Lp-fa}) and (\ref{eq:alg-Lp-fb}) are
polynomials in the variables $a$ and $b$ of degree $p-1$.
Even if it possible to eliminate one of the variables, there is no general closed formula 
that expresses the common solutions of the equations (\ref{eq:alg-Lp-fa}) and (\ref{eq:alg-Lp-fb})
for $n>1$ in terms of the coordinates $x_j$, $y_j$.
\begin{theorem}
\label{satz:alg-Lp}
Let $p_j=(x_j,y_j)^T$, $j=1,\ldots,m$ such that $x_j\ne x_k$ for all $j\ne k$.

For any $p>1$ there exists a unique linear function with minimal algebraic $L^p$-distance
to the set $\{p_1,\ldots,p_m\}$.
A linear funktion $y(x) = ax+b$ with $a,b\in\R$ has minimal algebraic $L^p$-distance to the set
$\{p_1,\ldots,p_m\}$ if and only if
$a$ and $b$ satisfy the equations (\ref{eq:alg-Lp-fa}) and (\ref{eq:alg-Lp-fb}).
\end{theorem}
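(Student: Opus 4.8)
The plan is to view $f$ as a special case of the objective function analysed in Lemmas \ref{lem:Lp-Existenz-globMin} and \ref{lem:Lp-Eindeutigkeit-globMin}. Setting $x := (a,b)^T$, $\xi_j := (-x_j,-1)^T \in \R^2$ and $\eta_j := y_j$, one has $y_j - a x_j - b = \xi_j^T x + \eta_j$, so that $f(a,b) = \sum_{j=1}^m |\xi_j^T x + \eta_j|^p$, which is exactly the form required by the two lemmas.

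First I would check the hypothesis $\bigcap_{j=1}^m \xi_j^\perp = \{0\}$. A vector $(u,v)^T$ lies in $\xi_j^\perp$ precisely when $v = -x_j u$; if it lies simultaneously in $\xi_j^\perp$ and $\xi_k^\perp$ with $x_j \ne x_k$, then $(x_k - x_j) u = 0$, forcing $u = 0$ and hence $v = 0$. Since $m \ge 2$ and the $x_j$ are pairwise distinct, two such indices exist, so the intersection is $\{0\}$. Lemma \ref{lem:Lp-Existenz-globMin} then yields a global minimum of $f$, and Lemma \ref{lem:Lp-Eindeutigkeit-globMin} — where the assumption $p > 1$ enters — yields that this minimizer is unique.

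For the characterization by (\ref{eq:alg-Lp-fa}) and (\ref{eq:alg-Lp-fb}), I would use that $f$ is convex and continuously differentiable: the map $t \mapsto |t|^p$ is $C^1$ for $p > 1$, with vanishing derivative at $t = 0$. For a convex $C^1$ function on $\R^2$, a point is a global minimizer if and only if its gradient is zero (see the appendix on convex functions). The components of the gradient were already computed as $f_a(a,b)$ and $f_b(a,b)$; the indices in $J_0$ contribute nothing because $|y_j - a x_j - b|^{p-1} = 0$ there. Setting $f_b(a,b) = 0$ gives (\ref{eq:alg-Lp-fa}) and $f_a(a,b) = 0$ gives (\ref{eq:alg-Lp-fb}), which proves the equivalence; combined with the preceding paragraph this also records the existence and uniqueness statements.

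The point needing the most care is the passage from \emph{critical point} to \emph{global minimizer}, which is valid only because $f$ is convex, together with the companion observation that the $J_0$-terms really do drop out of $\nabla f$; for $1 < p < 2$ the latter relies on $|t|^p$ being differentiable at $0$ even though $|t|$ is not, which is also why the theorem is stated for $p > 1$ rather than $p \ge 1$.
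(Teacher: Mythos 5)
Your proposal is correct and follows essentially the same route as the paper: the same substitution $\xi_j=(-x_j,-1)^T$, $\eta_j=y_j$ reducing the theorem to Lemmas \ref{lem:Lp-Existenz-globMin} and \ref{lem:Lp-Eindeutigkeit-globMin}, plus the first-order characterization of the minimizer via convexity and differentiability already set up in the text preceding the theorem. Your explicit verification of $\bigcap_j\xi_j^\perp=\{0\}$ and your remarks on the $J_0$-terms and on why $p>1$ is needed only make the paper's terse argument more complete.
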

\begin{proof}
The assertion follows from Lemma \ref{lem:Lp-Existenz-globMin} and 
Lemma \ref{lem:Lp-Eindeutigkeit-globMin} with $\eta_j=y_j$ and $\xi_j = (-x_j,-1)^T$.
The condition $\cap_{j=1}^m \{ (a,b)^T \in \R^2: \xi_j^T(a,b)^T=0\} = \{ 0\}$ is satisfied, since
$j>1$ and $x_j\ne x_k$ for all $k\ne j$.
\end{proof}
\begin{cor}
\label{folg:alg-Lp-Spiegelsymmetrie}
If the set $\{ p_1,\ldots,p_m\}$ is invariant under the reflection in the line given by $x=\bar{x}$,
then the unique line with minimal algebraic $L^\infty$-distance to $\{ p_1,\ldots,p_m\}$ is the graph
of the linear function
$$y(x) \equiv b_0 \text{ where} \sum_{y_j>b_0}(y_j-b_0)^{p-1} = \sum_{y_j<b_0} (b_0-y_j)^{p-1}.$$
\end{cor}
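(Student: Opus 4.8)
The plan is to follow the pattern of Corollaries~\ref{folg:algL2Spiegelsymmetrie} and~\ref{folg:algLinfSpiegelsymmetrie}: use uniqueness together with the reflection symmetry to determine the shape of the optimal line, and then read off the free parameter from the optimality equations of Theorem~\ref{satz:alg-Lp}. First I would note that the algebraic distance is invariant under the reflection $\rho\colon(x,y)\mapsto(2\bar x-x,y)$ in the line $x=\bar x$, since $\rho$ is a composition of a translation, the reflection in the $y$-axis and a translation, each of which preserves the algebraic distance. By Theorem~\ref{satz:alg-Lp} there is a unique linear function with minimal algebraic $L^p$-distance to $\{p_1,\ldots,p_m\}$; let $g$ be its graph. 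The hypothesis gives $\rho(\{p_1,\ldots,p_m\})=\{p_1,\ldots,p_m\}$, so $\rho(g)$ is optimal as well, and uniqueness forces $\rho(g)=g$. A line $y=ax+b$ fixed by $\rho$ satisfies $ax+b=a(2\bar x-x)+b$ for all $x$, hence $a=-a$, i.e.\ $a=0$; so the optimal line is horizontal, $y\equiv b_0$.

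Next I would substitute $a=0$ into the system~(\ref{eq:alg-Lp-fa})--(\ref{eq:alg-Lp-fb}) that characterizes the minimum in Theorem~\ref{satz:alg-Lp}. For $a=0$ the index sets become $J_+=\{j:y_j>b_0\}$, $J_-=\{j:y_j<b_0\}$, $J_0=\{j:y_j=b_0\}$, and the summands $|y_j-b_0|^{p-1}$ vanish on $J_0$, so~(\ref{eq:alg-Lp-fa}) reads exactly
$$\sum_{y_j>b_0}(y_j-b_0)^{p-1}=\sum_{y_j<b_0}(b_0-y_j)^{p-1}.$$
The real function $b\mapsto\sum_{y_j>b}(y_j-b)^{p-1}-\sum_{y_j<b}(b-y_j)^{p-1}$ is continuous and strictly decreasing, positive for $b<\min_j y_j$ and negative for $b>\max_j y_j$, so this equation has a unique root; since Theorem~\ref{satz:alg-Lp} guarantees that the optimal $b_0$ is a root, it is this one, which is precisely the $b_0$ of the statement.

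It remains to make sure the second optimality equation~(\ref{eq:alg-Lp-fb}) is consistent with $a=0$ and this $b_0$; this is where the symmetry of the point set is used once more. The involution $\rho$ maps $J_+$ onto $J_+$ and $J_-$ onto $J_-$ (mirror points share a $y$-coordinate) and satisfies $x_j+x_{\rho(j)}=2\bar x$, so $\sum_{j\in J_\pm}x_j|y_j-b_0|^{p-1}=\bar x\sum_{j\in J_\pm}|y_j-b_0|^{p-1}$, and~(\ref{eq:alg-Lp-fb}) collapses to~(\ref{eq:alg-Lp-fa}). The at most one point with $x_j=\bar x$ is fixed by $\rho$ and lies in $J_0$, so it needs no special care. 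Strictly speaking this last verification is optional — uniqueness alone already forces the minimizer to be a horizontal line, and~(\ref{eq:alg-Lp-fa}) then identifies which one — so there is essentially no obstacle: the content of the corollary is inherited entirely from Theorem~\ref{satz:alg-Lp} and the equivariance of the set of optimal lines.
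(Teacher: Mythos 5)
Your argument is correct and is essentially the paper's own proof: uniqueness from Theorem~\ref{satz:alg-Lp} plus equivariance under the reflection forces the optimal line to be horizontal, and then equation~(\ref{eq:alg-Lp-fa}) with $a=0$ yields the stated condition on $b_0$ (the paper even phrases it as ``similar to Corollary~\ref{folg:algL2Spiegelsymmetrie}; the condition on $b_0$ follows from~(\ref{eq:alg-Lp-fa})''). Your extra check of~(\ref{eq:alg-Lp-fb}) is, as you say, optional; only note that a fixed point with $x_j=\bar{x}$ need not lie in $J_0$, though this does not affect the identity you use.
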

\begin{proof}
Since there is a unique optimal linear function, 
the proof is similar to that of Corollary \ref{folg:algL2Spiegelsymmetrie}. 
The condition on $b_0$ follows from equation (\ref{eq:alg-Lp-fa}).
\end{proof}
\begin{rem}
Theorem \ref{satz:alg-Lp} remains true if the condition $x_j\ne x_k$ for all $j\ne k$ 
is weakened to the existence of indices $j\ne k$ such that $x_j\ne x_k$,
since even in that cases $\cap_{j=1}^m \{ (a,b)^T \in \R^2: \xi_j^T(a,b)^T=0\} = \{ 0\}$.

However, if $x_1=\ldots=x_m=\bar{x}$, then the values of $f(a,b-a\bar{x})$ 
are independent from $a$ and there exists a unique $b_0\in\R$ such that 
$f(0,b_0) \leq f(0,b)$ for all $b\in\R$. Thus, there are infinitely many linear functions with
minimal algebraic $L^p$-distance to $\{p_1,\ldots,p_m\}$. 
These are exactly $y(x) = a(x-\bar{x})+b_0$ with $a\in\R$.
\end{rem}
\subsection{Minimal geometric $L^p$-distance}
Given pairwise distinct points $p_j\in \R^2$, $j=1,\ldots,m$, we want to determine the lines
given by $g=\{ q\in\R^2 : \langle q,n\rangle = c\}$, $c\in\R$, $n\in S^1$, with 
minimal geometric $L^p$-distance to the set $\{Êp_1,\ldots,p_m\}$, 
i.e., the minimum of the function
$$f:\R\times S^1\to\R, \quad f(c,n)=\sum_{j=1}^m  | c-\langle p_j,n\rangle|^p.$$
As for the investigation of the geometric $L^1$-, $L^2$- and $L^\infty$-distance 
we decompose the index set, this means
$J_+:=\{Êj : \langle p_j,n\rangle > c\}$, $J_0:=\{Êj : \langle p_j,n\rangle = c\}$,
$J_-:=\{Êj : \langle p_j,n\rangle < c\}$ (see Figure \ref{fig:ZerlegGeo}).
Using the parametrization $n(t) = (\cos t, \sin t)^T$ we obtain the following partial derivations of the
function $f(c,n(t))$:
\begin{align*}
f_c(c,n) & = p\sum_{j\in J_-} |c-\langle p_j,n\rangle|^{p-1} - p\sum_{j\in J_+} |c-\langle p_j,n\rangle|^{p-1}\\
f_t(c,n) & = p\sum_{j\in J_+} |c-\langle p_j,n\rangle|^{p-1}p_j^TJn 
	- p\sum_{j\in J_-} |c-\langle p_j,n\rangle|^{p-1}p_j^TJn
\end{align*}
If $f(g)$ is a local minimum and the line $g$ is given by $\langle q,n\rangle = c$
 with  $c\in\R$ and $n\in S^1$, then 
\begin{equation}
\label{eq:geo-Lp-fc}
\sum_{j\in J_-} |c-\langle p_j,n\rangle|^{p-1}  = \sum_{j\in J_+} |c-\langle p_j,n\rangle|^{p-1}
\end{equation}
and
\begin{equation}
\label{eq:geo-Lp-fn}
\sum_{j\in J_-} |c-\langle p_j,n\rangle|^{p-1}p_j^TJn 
	= \sum_{j\in J_+} |c-\langle p_j,n\rangle|^{p-1}p_j^TJn.
\end{equation}
The equations (\ref{eq:geo-Lp-fc}) and (\ref{eq:geo-Lp-fn}) are polynomials in $c$ and $n$
respectively real analytic expressions in $c$ and $t$ if $p=2n$ with $n\in\N$. 
Except for $p=2$, there is no general explicit formula of the solutions of the equation $df = (0,0)$.
\begin{theorem}
\label{satz:geo-Lp}
Let $p_j\in\R^2$, $j=1,\ldots,m$ be pairwise distinct points.

For any $p>1$ there exists a line with minimal geometric $L^p$-distance to the set $\{p_1,\ldots,p_m\}$.
If the line $g=\{ q\in\R^2 : \langle q,n\rangle = c\}$ with $c\in\R$ and $n\in S^1$ has minimal
geometric $L^p$-distance to the set $\{p_1,\ldots,p_m\}$, then $c$ and $n$ satisfy the equations
(\ref{eq:geo-Lp-fc}) and (\ref{eq:geo-Lp-fn}).

If $f(c_0,n') = f(c_1,n') = \min\{ f(c,n):c\in\R, n\in S^1\}$, then $c_0=c_1$.
\end{theorem}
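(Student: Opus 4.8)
The plan is to derive all three assertions from the two lemmas of this section together with the convexity of $f$ in the variable $c$ and the differentiability already noted at the start of the section.

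First I would prove existence by reducing to a compact domain. For fixed $n\in S^1$ the function $c\mapsto f(c,n)=\sum_{j=1}^m|c-\langle p_j,n\rangle|^p$ is convex and $C^1$, and from the displayed formula for $f_c$ one reads off that $f_c(c,n)>0$ once $c>\max_j\langle p_j,n\rangle$ (then $J_+=\emptyset$) and $f_c(c,n)<0$ once $c<\min_j\langle p_j,n\rangle$ (then $J_-=\emptyset$). Hence every minimizer in $c$ lies in $[\min_j\langle p_j,n\rangle,\ \max_j\langle p_j,n\rangle]\subseteq[-R,R]$ with $R:=\max_j\|p_j\|$, a bound uniform in $n$. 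Therefore $\inf_{\R\times S^1}f=\inf_{[-R,R]\times S^1}f$, and the right-hand infimum is attained since $f$ is continuous and $[-R,R]\times S^1$ is compact. (Alternatively, Lemma~\ref{lem:Lp-Existenz-globMin} can be invoked for the inner minimization over $c$, followed by the same uniform-boundedness argument.)

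Next, for the critical equations I would use that the global minimum is in particular a local minimum and that, with the parametrization $n=n(t)=(\cos t,\sin t)^T$, the map $(c,t)\mapsto f(c,n(t))$ is continuously differentiable; the key point is that $x\mapsto|x|^p$ is $C^1$ for $p>1$ with vanishing derivative at $0$, so no non-smoothness appears even at lines passing through some $p_j$ (where $J_0\ne\emptyset$). Vanishing of $f_c$ and $f_t$ at the minimum, after substituting the formulas for the partial derivatives stated before the theorem, is exactly the system~(\ref{eq:geo-Lp-fc}) and~(\ref{eq:geo-Lp-fn}).

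Finally, for uniqueness at a fixed normal vector $n'$, I would apply Lemma~\ref{lem:Lp-Eindeutigkeit-globMin} with ambient dimension $1$, taking $\xi_j=1$ and $\eta_j=-\langle p_j,n'\rangle$: the hypothesis $\cap_{j=1}^m\xi_j^\perp=\{0\}$ holds trivially, so $c\mapsto f(c,n')$ has at most one minimizer, which forces $c_0=c_1$. (Equivalently, $c\mapsto\sum_{j=1}^m|c-\langle p_j,n'\rangle|^p$ is a sum of strictly convex functions, hence strictly convex.) The only step that needs genuine care is the first one, because $\R\times S^1$ is not compact; once the optimal offset is bounded uniformly in $n$ by $\max_j\|p_j\|$ via the sign of $f_c$, everything else follows directly from the partial convexity and the $C^1$-regularity of $f$ recorded in this section.
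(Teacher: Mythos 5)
Your proof is correct and follows essentially the same route as the paper: both arguments bound the optimal offset $c$ uniformly in $n$ by the interval $[\min_j\langle p_j,n\rangle,\max_j\langle p_j,n\rangle]$ (the paper reads this off from the first-order condition (\ref{eq:geo-Lp-fc}) forcing $J_+,J_-\ne\{1,\ldots,m\}$, you from the sign of $f_c$ outside that interval --- the same observation), then invoke compactness of $[-R,R]\times S^1$ for existence, $C^1$-regularity of $|x|^p$ for the critical equations, and Lemma~\ref{lem:Lp-Eindeutigkeit-globMin} in ambient dimension one for uniqueness of $c$ at fixed $n'$.
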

\begin{proof}
Applying Lemma \ref{lem:Lp-Existenz-globMin} and Lemma \ref{lem:Lp-Eindeutigkeit-globMin} with
$\eta_j=-\langle p_j,n\rangle$ and $\xi_j = (1)$ it follows that for any $n\in S^1$ there exists a unique
$c_n\in\R$ such that $f(c_n,n) = \min\{ f(c,n) : c\in\R\}$. 
Since the function $c\mapsto f(c,n)$ is continuously differentiable, equation (\ref{eq:geo-Lp-fc}) yields 
$\sum_{j\in J_-} |c_n-\langle p_j,n\rangle|^{p-1}  = \sum_{j\in J_+} |c_n-\langle p_j,n\rangle|^{p-1}$.
In particular, $J_+,J_- \ne \{ 1,\ldots,m\}$ for any line $g = \{ q : c_n=\langle q,n\rangle \}$. 
Thus,
$$a:= \min_{j=1,\ldots,m,\, n\in S^1} \langle p_j,n\rangle \leq c_n \leq 
	\max_{j=1,\ldots,m,\, n\in S^1} \langle p_j,n\rangle =:b$$
for all $n\in S^1$. 
The continuous function $f:\R\times S^1\to\R$, $(c,n)\mapsto f(c,n)$, admits a global minimum on the 
compact set $[a,b]\times S^1$. 
\end{proof}
\begin{rem}
Theorem \ref{satz:geo-Lp} remains true if the condition $x_j\ne x_k$ for all $j\ne k$ is omitted.
\end{rem}
\begin{rem}
If the set 
$$M:=\{ (c_0,n_0): f(c_0,n_0) \leq f(c,n) \,\forall c\in\R, n\in S^1\} \subset \R\times S^1$$
is infinite, then $M$ has an accumulation point, because $M\subset [a,b]\times S^1$.
If $p=2n$ with $n\in\N$, then the existence of such a limit point in $M$ implies that for any
$n\in S^1$ there exists $c_n\in \R$ such that $(c_n,n)\in M$, since $f$ is a polynomial in $c$ and $n$.
If $p\not\in 2\N$, then, in general, the function $f(c,n)$ is not real analytic.
We denote the set of normal vectors of optimal lines by $N$, 
i.e., $N:=\{ n\in S^1: \R\times\{n\} \cap M \ne \emptyset\}$.
If  $p\in 2\N$, then $N$ is finite or $N= S^1$. Is this fact true for any $p>1$?

The lines with minimal geometric $L^p$-distance to the vertices of an equilateral triangle are identified
in \cite{AP} by taking advantage of the symmetries of the point set for all $1\leq p\leq\infty$. 
In this simplest nontrivial situation 
the set $M$ consists of three optimal lines for $p>1$ with $p\ne 2, 4/3$
and $N=S^1$ for $p=4/3$ and $p=2$.
\end{rem}
\begin{appendix}
\section{Convexity}
A subset $K\subset\R^n$ is called convex if the line segment $\{ tx_1+(1-t)x_0 : 0\leq t \leq 1\}$
is contained in $K$ for any  $x_0,x_1\in K$.
The convex hull  $H(K)$ of a set  $K\subset \R^n$ is given by
$H(K)Ê:= \{ x=tx_1+(1-t)x_0 : x_0,x_1\in K, 0\leq t \leq 1\}$.
A subset $K\subset \R^n$ is convex if and only if $K=H(K)$.
Convex sets are connected.

Let $K\subset \R^n$ be a convex set.
A function $f:K\to\R$ is called convex if the inequality
\begin{equation}
\label{eq:Definition-KonvexeFunktion}
f(tx_1+(1-t)x_0) \leq tf(x_1)+(1-t)f(x_0)
\end{equation}
holds for all $x_0,x_1\in K$ and all $0\leq t\leq 1$.
The function $f$ is called strictly convex if the inequality (\ref{eq:Definition-KonvexeFunktion}) is strict
for all $t\in (0,1)$ and $x_0\ne x_1$.
If  $K_1\subset K_2\subset \R^n$ are two convex sets  and the function  $f:K_2\to\R$ is convex,
then the restriction $f\vert_{K_1}:K_1\to\R$ is convex.
\begin{lemma}
Let $f:\R^n\to\R$ be a convex function.
The set $$M:=\{Êx\in\R^n : f(x)\leq f(y) \text{ for all } y\in\R^n\}$$ is convex.
If $f$ is strictly convex, then $M$ contains at most one element.
\end{lemma}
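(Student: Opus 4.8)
The plan is to first dispose of the trivial case and then exploit that all points of $M$ share the same function value, namely the global minimum. If $M=\emptyset$ there is nothing to prove, since the empty set is convex. So assume $M\neq\emptyset$ and pick any $x^*\in M$; set $\mu:=f(x^*)$. By definition of $M$ every $x\in M$ satisfies $f(x)\leq f(x^*)=\mu$ and, again by definition of $M$ applied with $x^*$ in the role of the minimizer, $f(x^*)\leq f(x)$, so in fact $f(x)=\mu$ for all $x\in M$; moreover $\mu=\min\{f(y):y\in\R^n\}$.

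For the first assertion I would take $x_0,x_1\in M$ and $t\in[0,1]$ and apply the defining inequality (\ref{eq:Definition-KonvexeFunktion}) of a convex function:
$$f(tx_1+(1-t)x_0)\leq tf(x_1)+(1-t)f(x_0)=t\mu+(1-t)\mu=\mu.$$
On the other hand $\mu$ is the global minimum of $f$, so $f(tx_1+(1-t)x_0)\geq\mu$, whence $f(tx_1+(1-t)x_0)=\mu$, i.e.\ $tx_1+(1-t)x_0\in M$. Thus $M$ is convex.

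For the second assertion I would argue by contradiction: suppose $f$ is strictly convex and there exist $x_0,x_1\in M$ with $x_0\neq x_1$. Taking $t=\tfrac12\in(0,1)$, strict convexity gives
$$f\!\left(\tfrac12 x_1+\tfrac12 x_0\right)<\tfrac12 f(x_1)+\tfrac12 f(x_0)=\mu,$$
contradicting that $\mu$ is the global minimum of $f$. Hence no two distinct points lie in $M$, so $M$ has at most one element. There is no real obstacle here; the only thing to be careful about is handling the empty-$M$ case and recording explicitly that all elements of $M$ realize the same value $\mu=\min f$, which is what makes both parts immediate.
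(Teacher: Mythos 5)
Your proof is correct and follows essentially the same route as the paper: the convexity inequality shows a convex combination of two minimizers is again a minimizer, and strict convexity at $t=\tfrac12$ rules out two distinct minimizers. The only cosmetic difference is that you record the common value $\mu$ explicitly while the paper bounds by $\max\{f(x_0),f(x_1)\}$; both are the same argument.
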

\begin{proof}
The following inequality holds for all $0\leq t \leq 1$ and $x_0,x_1\in\R^n$:
$$f(tx_1+(1-t)x_0) \leq t f(x_1)+(1-t)f(x_0) \leq \max\{f(x_0),f(x_1)\}$$ 
If $x_0,x_1\in M$, then $f(tx_1+(1-t)x_0) = f(x_0) = f(x_1)$, i.e., $tx_1+(1-t)x_0 \in M$.

If $f$ is strictly convex, $x_0\ne x_1\in \R^n$ and $0<t<1$, then
$$f(tx_1+(1-t)x_0) < t f(x_1)+(1-t)f(x_0) \leq \max\{ f(x_0),f(x_1)\}.$$
Consequently, $\{ x_0,x_1\} \not \subset M$.
\end{proof}
\begin{lemma}
For any $\xi\in\R^n$ the linear function $y:\R^n\to \R$ defined by $y(x) = \xi^Tx$ is convex.
\end{lemma}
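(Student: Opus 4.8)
The plan is to verify the defining inequality (\ref{eq:Definition-KonvexeFunktion}) directly; in fact the computation will show that it holds with equality for every function of this form. First I would fix arbitrary points $x_0,x_1\in\R^n$ and a parameter $t\in[0,1]$ and expand
$y(tx_1+(1-t)x_0)=\xi^T\bigl(tx_1+(1-t)x_0\bigr)$, using the distributivity of the Euclidean inner product in its second argument together with its homogeneity under real scalars. This yields $y(tx_1+(1-t)x_0)=t\,\xi^Tx_1+(1-t)\,\xi^Tx_0=t\,y(x_1)+(1-t)\,y(x_0)$, so the two sides of (\ref{eq:Definition-KonvexeFunktion}) coincide and, a fortiori, the required inequality is satisfied. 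Hence $y$ is convex by definition.

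The only ingredient is the linearity of the map $x\mapsto\xi^Tx$, which is immediate from the definition of the inner product, so there is no real obstacle here. The single point worth recording is that the argument produces equality, never strict inequality, in (\ref{eq:Definition-KonvexeFunktion}); thus a linear function is convex but never strictly convex. This observation is exactly what underlies the non-uniqueness phenomena encountered earlier, since objective functions assembled purely from absolute values or maxima of such linear pieces — the $L^1$ and $L^\infty$ settings — inherit convexity from these building blocks but not strict convexity.
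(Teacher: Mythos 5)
Your proof is correct and is essentially identical to the paper's: both expand $y(tx_1+(1-t)x_0)$ by linearity of the inner product and observe that the convexity inequality holds with equality. The closing remark about the failure of strict convexity is a sound and relevant observation, though not needed for the lemma itself.
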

\begin{proof} 
It holds
$y(tx_1+(1-t)x_0) = \xi^T(tx_1+(1-t)x_0)  = t\xi^Tx_1+(1-t)\xi^Tx_0 =  t y(x_1)+(1-t)y(x_0).$
\end{proof}
\begin{lemma}
If $f,g:\R^n\supset K \to\R$ are convex functions on a convex set $K$, 
then $f+g$ and $\max\{f,g\}$ are convex functions on $K$.

If $f$ or $g$ are additionally strictly convex, then $f+g$ is strictly convex.
If $f$ and $g$ are strictly convex, then $\max\{f,g\}$ is strictly convex.
\end{lemma}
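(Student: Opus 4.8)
The plan is to verify each assertion directly from the definition of a convex function, inequality~(\ref{eq:Definition-KonvexeFunktion}), evaluated at an arbitrary point $x_t := tx_1 + (1-t)x_0$ with $x_0,x_1 \in K$ and $0 \leq t \leq 1$; since $K$ is convex, $x_t \in K$, so $f$, $g$ and hence $\max\{f,g\}$ are all defined there.

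First I would treat $f+g$. Adding the convexity inequalities for $f$ and for $g$ at $x_t$ gives
$$(f+g)(x_t) = f(x_t) + g(x_t) \leq \bigl(tf(x_1)+(1-t)f(x_0)\bigr) + \bigl(tg(x_1)+(1-t)g(x_0)\bigr) = t(f+g)(x_1)+(1-t)(f+g)(x_0),$$
so $f+g$ is convex. If in addition $f$ is strictly convex (the case with $g$ strictly convex is symmetric), $x_0 \ne x_1$ and $0 < t < 1$, then the first of the two summed inequalities is strict while the second is merely ``$\leq$''; their sum is still strict, so $f+g$ is strictly convex.

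Next, for $h := \max\{f,g\}$ I would use that $f \leq h$ and $g \leq h$ on all of $K$. Then
$$f(x_t) \leq tf(x_1)+(1-t)f(x_0) \leq th(x_1)+(1-t)h(x_0),$$
and the same chain holds with $f$ replaced by $g$; taking the larger of the two left-hand sides gives $h(x_t) \leq th(x_1)+(1-t)h(x_0)$, so $h$ is convex. For strict convexity, assume both $f$ and $g$ are strictly convex, $x_0 \ne x_1$ and $0 < t < 1$. Whichever of $f$, $g$ attains the maximum at $x_t$ --- say $f(x_t) = h(x_t)$ --- the first inequality in the displayed chain is now strict, and we conclude $h(x_t) < th(x_1)+(1-t)h(x_0)$.

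The argument is elementary and I do not expect a genuine obstacle. The one point that deserves care is the asymmetry in the statement: strict convexity of $\max\{f,g\}$ really does need \emph{both} summands strictly convex, since on any region where $h \equiv g$ and $g$ is, say, affine, $h$ inherits that affine behaviour and fails to be strictly convex; the proof above respects this, because the strict inequality can only be harvested from whichever of $f$, $g$ realizes the maximum at the intermediate point.
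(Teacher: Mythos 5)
Your proof is correct and follows essentially the same route as the paper: sum the two convexity inequalities for $f+g$, and for $\max\{f,g\}$ bound each of $f(x_t)$, $g(x_t)$ by $t\max\{f,g\}(x_1)+(1-t)\max\{f,g\}(x_0)$ before taking the maximum. In fact you go slightly further than the paper's own proof, which only writes out the non-strict inequalities and leaves the strict-convexity claims implicit; your tracking of where the strict inequality is ``harvested'' is the right way to complete that part.
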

\begin{proof}
Applying the inequality (\ref{eq:Definition-KonvexeFunktion}) to $f$ and $g$ we obtain
\begin{align*}
(f+g)(tx_1+(1-t)x_0) & = f(tx_1+(1-t)x_0)+g(tx_1+(1-t)x_0) \\
& \leq  t f(x_1)+(1-t)f(x_0) + t g(x_1)+(1-t)g(x_0) \\
& = t(f+g)(x_1)+(1-t)(f+g)(x_0) .
\end{align*}
Since $t\geq 0$ and $1-t\geq 0$, it follows that
\begin{align*}
\max\{f,g\}(tx_1+(1-t)x_0) & = \max\{f(tx_1+(1-t)x_0),g(tx_1+(1-t)x_0)\} \\
\leq & \max\{t f(x_1)+(1-t)f(x_0), t g(x_1)+(1-t)g(x_0)\} \\
\leq & t\max\{f(x_1),g(x_1)\}+(1-t)\max\{f(x_0),g(x_0)\} \\
= & t \max\{f,g\}(x_1) + (1-t) \max\{f,g\}(x_0) .
\end{align*}
\end{proof}
\begin{cor}
The function $\R\to\R$ defined by $x\mapsto |x|$ is convex.
\end{cor}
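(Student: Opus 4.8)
The plan is to realize the absolute value as the pointwise maximum of two linear functions and then invoke the two lemmas just proved. First I would record the elementary identity $|x| = \max\{x,-x\}$, valid for every $x\in\R$: if $x\geq 0$ then $|x| = x \geq -x$, and if $x<0$ then $|x| = -x > x$, so in either case $|x|$ equals the larger of $x$ and $-x$.

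Next I would observe that the map $x\mapsto x$ is the linear function associated with $\xi = 1\in\R$, and $x\mapsto -x$ is the linear function associated with $\xi = -1\in\R$; by the lemma asserting that $x\mapsto \xi^Tx$ is convex for any $\xi\in\R^n$ (here $n=1$), both are convex functions on the convex set $\R$. Then, applying the lemma stating that the maximum of two convex functions on a convex set is again convex, I conclude that $\max\{x,-x\}$ is convex, and hence by the identity above that $x\mapsto |x|$ is convex.

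There is essentially no obstacle here: the only thing to verify by hand is the identity $|x| = \max\{x,-x\}$, which is immediate from the definition of the absolute value, and everything else is a direct citation of the preceding two lemmas.

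\begin{proof}
For every $x\in\R$ we have $|x| = \max\{x,-x\}$.
The functions $\R\to\R$, $x\mapsto x$ and $x\mapsto -x$, are linear, hence convex.
Therefore $x\mapsto\max\{x,-x\} = |x|$ is convex as the maximum of two convex functions.
\end{proof}
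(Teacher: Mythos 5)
Your proof is correct and is exactly the paper's argument: the paper's own proof consists of the single identity $|x| = \max\{x,-x\}$, relying implicitly on the same two preceding lemmas (linearity implies convexity, and the maximum of convex functions is convex) that you cite explicitly. No issues.
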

\begin{proof}
$|x| = \max\{ x,-x\}$
\end{proof}
\begin{lemma}
Let $K_f\subset \R$ and $K_g\subset\R^n$ be convex sets and
$f:K_f\to\R$ and $g:K_g\to\R$ be convex functions satisfying $g(K_g)\subset K_f$.
If $f$ is an increasing function on $K_f$, then $f\circ g:K_g\to\R$ is a convex function.
\end{lemma}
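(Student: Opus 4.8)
The plan is to unwind the definition of convexity, inequality (\ref{eq:Definition-KonvexeFunktion}), directly for the composite function. Fix $x_0,x_1\in K_g$ and $t\in[0,1]$, and set $x_t:=tx_1+(1-t)x_0$. Since $K_g$ is convex, $x_t\in K_g$, so $(f\circ g)(x_t)$ is defined; since $g(K_g)\subset K_f$, the values $g(x_0),g(x_1),g(x_t)$ all lie in $K_f$, and since $K_f$ is convex, the point $s:=tg(x_1)+(1-t)g(x_0)$ lies in $K_f$ as well, so $f(s)$ is defined.

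First I would apply convexity of $g$ to obtain $g(x_t)\le tg(x_1)+(1-t)g(x_0)=s$. Next, because $f$ is increasing on $K_f$ and both $g(x_t)$ and $s$ belong to $K_f$, monotonicity gives $f(g(x_t))\le f(s)$. Finally, convexity of $f$ applied to the points $g(x_0),g(x_1)\in K_f$ yields $f(s)\le tf(g(x_1))+(1-t)f(g(x_0))$. Chaining the two inequalities gives
$$(f\circ g)(x_t)\le t(f\circ g)(x_1)+(1-t)(f\circ g)(x_0),$$
which is exactly inequality (\ref{eq:Definition-KonvexeFunktion}) for $f\circ g$ on the convex set $K_g$.

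There is essentially no obstacle here; the only points requiring attention are bookkeeping rather than substance. One must apply monotonicity of $f$ \emph{before} convexity of $f$ (the inner estimate from $g$ lives on the $x$-side of $f$, not the value side), and one must invoke convexity of $K_f$ to be sure the intermediate argument $s=tg(x_1)+(1-t)g(x_0)$ actually lies in the domain of $f$. It should also be noted that ``increasing'' is used in the weak (non-decreasing) sense, which is precisely what lets the monotonicity step preserve ``$\le$''.
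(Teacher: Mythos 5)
Your argument is correct and is essentially identical to the paper's proof: apply convexity of $g$, then monotonicity of $f$, then convexity of $f$, and chain the inequalities. The extra bookkeeping you supply (checking that $s=tg(x_1)+(1-t)g(x_0)$ lies in $K_f$ via convexity of $K_f$) is a detail the paper leaves implicit, but the route is the same.
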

\begin{proof}
The inequality $g(tx_1+(1-t)x_0) \leq tg(x_1)+(1-t)g(x_0)$ holds 
for all $x_0,x_1\in K_g$ and for all $0\leq t\leq 1$.
The monotony and the convexity of $f$ imply
\begin{align*}
(f\circ g)(tx_1+(1-t)x_0) &  = f(g(tx_1+(1-t)x_0)) \leq f(tg(x_1)+(1-t)g(x_0)) \\
& \leq tf(g(x_1))+(1-t)f(g(x_0)) \\
& = t (f\circ g)(x_1)+(1-t) (f\circ g)(x_0).
\end{align*}
\end{proof}
\begin{lemma}
Let $K\subset\R$ be a convex set.
If $f:K\to\R$ is a convex twice continuously differentiable function, then $f''(x)\geq 0$ for all $x\in\R$.
\end{lemma}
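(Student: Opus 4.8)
The idea is to read off the sign of $f''$ from the \emph{midpoint} instance of the convexity inequality, using a second-order Taylor expansion (which is available because $f$ is twice continuously differentiable). It suffices to prove $f''(x_0)\geq 0$ for every $x_0$ in the interior of $K$: since $f''$ is continuous on $K$ and the closure of the interior of a convex set with nonempty interior is all of $K$, the inequality then extends to the boundary by continuity (and if $K$ has empty interior the claim is vacuous).

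\textbf{Key steps.} First I would fix an interior point $x_0$ of $K$ and choose $\delta>0$ so small that $[x_0-\delta,x_0+\delta]\subset K$. For $0<|h|\leq\delta$ the point $x_0$ is the midpoint of $x_0+h$ and $x_0-h$, so applying the definition of convexity, namely inequality (\ref{eq:Definition-KonvexeFunktion}) with $t=\tfrac12$, gives
$$f(x_0) \;=\; f\!\left(\tfrac12(x_0+h)+\tfrac12(x_0-h)\right)\;\leq\;\tfrac12 f(x_0+h)+\tfrac12 f(x_0-h),$$
that is, $f(x_0+h)+f(x_0-h)-2f(x_0)\geq 0$. Second, since $f$ is $C^2$ near $x_0$, Taylor's theorem with a remainder yields $f(x_0\pm h)=f(x_0)\pm f'(x_0)h+\tfrac12 f''(x_0)h^2+r(\pm h)$ with $r(\pm h)/h^2\to 0$ as $h\to 0$; adding the two expansions, the first-order terms cancel and
$$\frac{f(x_0+h)+f(x_0-h)-2f(x_0)}{h^2}\;=\;f''(x_0)+\frac{r(h)+r(-h)}{h^2}\;\xrightarrow[h\to 0]{}\;f''(x_0).$$
The left-hand side is nonnegative for every $h$ by the first step, so the limit $f''(x_0)$ is nonnegative as well. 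Finally, extend to all of $K$ by continuity of $f''$ as noted above.

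\textbf{Main obstacle.} There is no serious obstacle; the only point that needs a little care is the justification of the second-difference limit, i.e.\ that the symmetric second difference quotient converges to $f''(x_0)$ for a $C^2$ function. This is a standard consequence of Taylor's formula, which the paper's elementary-calculus toolkit certainly contains. As an alternative avoiding Taylor, one could argue by contradiction: if $f''(x_0)<0$, continuity of $f''$ gives an interval $I\ni x_0$ on which $f''<0$, hence $f'$ is strictly decreasing on $I$; applying the mean value theorem separately on $[a,\tfrac{a+b}{2}]$ and $[\tfrac{a+b}{2},b]$ for $a<b$ in $I$ produces points $\xi_1<\xi_2$ with $f(\tfrac{a+b}{2})-f(a)=f'(\xi_1)\tfrac{b-a}{2}>f'(\xi_2)\tfrac{b-a}{2}=f(b)-f(\tfrac{a+b}{2})$, so $2f(\tfrac{a+b}{2})>f(a)+f(b)$, contradicting convexity at $t=\tfrac12$.
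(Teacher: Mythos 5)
Your proof is correct and rests on the same idea as the paper's: both extract $f''(x_0)\geq 0$ from the midpoint convexity inequality $f(x_0+h)+f(x_0-h)-2f(x_0)\geq 0$ combined with second-order local information at $x_0$. The paper packages this as ``subtract the tangent line at $a$ and observe that a critical point which is not a strict local maximum has nonnegative second derivative,'' while you use the symmetric second difference quotient and Taylor's theorem directly (and are in fact more careful about interior versus boundary points of $K$); the substance is the same.
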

\begin{proof}
For any $a\in K$ we consider the function $h(x) := f(x) - (x-a)f'(a)-f(a)$.
Now $h(a)=0$, $h'(a) = 0$, $h''(a) =f''(a)$.
The function $f$ is convex if and only if  $h$ is convex.
The convexity of $h$ implies  
$$h(a) = h\left(\frac{a-\varepsilon}{2}+\frac{a+\varepsilon}{2}\right) \leq 
	\frac{1}{2}(h(a-\varepsilon)+h(a+\varepsilon)) \leq \max((h(a-\varepsilon),h(a+\varepsilon)) $$
for all $\varepsilon$.
Hence, $h(a)$ is not a strict local maximum and $h''(a)\geq 0$.
\end{proof}
\begin{lemma}
Let $K\subset \R$ be a convex set and $f:K\to\R$ be a convex function.
If $f''(x)\geq 0$ for all $x\in K$, then $f$ is convex.
If $f''(x) > 0$ for all $x\in K$, then $f$ is strictly convex.
\end{lemma}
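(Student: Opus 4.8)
The plan is to deduce the chordal inequality (\ref{eq:Definition-KonvexeFunktion}) from a pointwise sign condition by expanding $f$ to second order. (I read the hypothesis as: $f:K\to\R$ is twice continuously differentiable, so that this lemma is the converse of the preceding one; the word ``convex'' there is evidently a slip.)

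First I would fix $x_0,x_1\in K$ and $t\in[0,1]$ and set $x_t:=tx_1+(1-t)x_0$; since $K\subseteq\R$ is convex it is an interval, so $x_t\in K$ and the closed segments from $x_t$ to $x_0$ and from $x_t$ to $x_1$ lie inside $K$. If $t\in\{0,1\}$ or $x_0=x_1$ there is nothing to prove, so assume $0<t<1$ and $x_0\ne x_1$. Then Taylor's formula with Lagrange remainder, applied on those two segments, yields points $\xi_0$ strictly between $x_0$ and $x_t$ and $\xi_1$ strictly between $x_1$ and $x_t$ with
\[ f(x_i)=f(x_t)+f'(x_t)(x_i-x_t)+\tfrac12 f''(\xi_i)(x_i-x_t)^2,\qquad i=0,1. \]

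Next I would take the convex combination $(1-t)f(x_0)+tf(x_1)$. The crucial cancellation is that the first-order terms sum to zero: $(1-t)(x_0-x_t)+t(x_1-x_t)=\bigl((1-t)x_0+tx_1\bigr)-x_t=0$. Hence
\[ (1-t)f(x_0)+tf(x_1)-f(x_t)=\tfrac12(1-t)f''(\xi_0)(x_0-x_t)^2+\tfrac12 t\,f''(\xi_1)(x_1-x_t)^2. \]
If $f''\ge 0$ on $K$ the right-hand side is $\ge 0$, which is exactly (\ref{eq:Definition-KonvexeFunktion}); hence $f$ is convex. If in addition $f''>0$ everywhere, then both summands are strictly positive, because $t$ and $1-t$ are positive and, as $0<t<1$ and $x_0\ne x_1$, neither $x_0-x_t$ nor $x_1-x_t$ vanishes; so the inequality is strict and $f$ is strictly convex.

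I do not expect a genuine obstacle; the only thing to be careful about is that the segments on which Taylor's theorem is invoked remain in $K$, which is immediate from $K$ being an interval. As an alternative avoiding Taylor, one could set $\phi(s):=f(x_0+s(x_1-x_0))$, observe $\phi''\ge 0$ so that $\phi'$ is nondecreasing, apply Rolle's theorem to $\psi(s):=\phi(s)-(1-s)\phi(0)-s\phi(1)$ to obtain $s^\ast$ with $\psi'(s^\ast)=0$, and conclude $\psi\le 0$ on $[0,1]$ (strict on $(0,1)$ when $\phi''>0$) from the monotonicity of $\psi'$; but the Taylor computation above is shorter.
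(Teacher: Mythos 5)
Your proof is correct and takes essentially the same route as the paper: both compare $f$ with its tangent line at $x_t$ and then form the convex combination, the first-order terms cancelling. The only (minor) difference is that you certify nonnegativity of the remainder via Taylor's theorem with Lagrange remainder, whereas the paper sets $h(x)=f(x)-(x-x_t)f'(x_t)-f(x_t)$ and argues from the monotonicity of $h'$; you also correctly observe that the stated hypothesis ``convex function'' should read ``twice continuously differentiable function.''
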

\begin{proof}
For any $x_0,x_1\in K$ and $0<t<1$ we set $x_t:=tx_1+(1-t)x_0$ and consider the function
$h(x) := f(x) -(x-x_t)f'(x_t)- f(x_t)$. 
Now $h(x_t)=0$, $h'(x_t)=0$ and $h''(x)=f''(x)$. 

If $h''(x)\geq 0$ for all $x\in K$, then $h'(x)$ is increasing. There exist $a,b \in K$ such that
$a\leq x_t\leq b$, $h'(x)=0$ for all $a\leq x\leq b$, $h'(x)<0$ for all $x<a$ and 
$h'(x)>0$ for all $x>b$. Hence, $h(x)\geq 0$ for all $x\in K$.
Now $th(x_1)+(1-t)h(x_0) \geq 0$ implies
\begin{align*}
0 & \leq t(f(x_1) - (x_1-x_t) f'(x_t) - f(x_t) ) \\
& \quad +(1-t)(f(x_0) -(x_0-x_t)f'(x_t)- f(x_t)) = t f(x_1)+(1-t)f(x_0)-f(x_t)
\end{align*}
If $h''(x)>0$ for all $x\in K$, then the inequality above is strict for $0<t<1$.
\end{proof}
\begin{cor}
For any $p>1$ the function $f:[0,\infty)\to\R$ given by $x\mapsto x^p$ is strictly convex.
\end{cor}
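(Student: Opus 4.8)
The plan is to reduce the claim to the second‑derivative criterion proved in the preceding lemma. First I would differentiate: for $x>0$ we have $f'(x)=px^{p-1}$ and $f''(x)=p(p-1)x^{p-2}$, and since $p>1$ both factors $p(p-1)$ and $x^{p-2}$ are positive, so $f''(x)>0$ on $(0,\infty)$. Applying the preceding lemma with $K=(0,\infty)$ (on which $f$ is twice continuously differentiable) yields that the restriction $f|_{(0,\infty)}$ is strictly convex.

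It then remains to extend strict convexity across the endpoint $x=0$, and this is the only point that needs a separate argument: for $1<p<2$ the function $f$ fails to be twice differentiable at $0$, so one cannot invoke the differentiable criterion on the whole closed half‑line at once. I would argue directly from the definition. Fix $x_0\ne x_1$ in $[0,\infty)$ and $t\in(0,1)$, and put $x_t:=tx_1+(1-t)x_0$. If $x_0>0$ and $x_1>0$, the whole segment lies in $(0,\infty)$, so the strict inequality $f(x_t)<tf(x_1)+(1-t)f(x_0)$ is exactly the strict convexity of $f|_{(0,\infty)}$ just established. If one endpoint is $0$, say $x_0=0<x_1$ (the case $x_1=0<x_0$ being symmetric), then $f(x_t)=(tx_1)^p=t^px_1^p$ while $tf(x_1)+(1-t)f(0)=tx_1^p$; since $0<t<1$ and $p>1$ give $t^p<t$, and $x_1^p>0$, we obtain $f(x_t)=t^px_1^p<tx_1^p=tf(x_1)+(1-t)f(x_0)$, again the strict inequality.

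Combining the two cases, the inequality (\ref{eq:Definition-KonvexeFunktion}) is strict for all $x_0\ne x_1$ in $[0,\infty)$ and all $t\in(0,1)$, hence $f$ is strictly convex on $[0,\infty)$. I expect the only non‑routine step to be recognizing that the boundary point $x=0$ must be handled by the elementary estimate $t^p<t$ rather than by the second‑derivative lemma, since that lemma applies only on the open half‑line.
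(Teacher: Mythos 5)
Your proof is correct and follows essentially the same route as the paper: the second-derivative criterion $f''(x)=p(p-1)x^{p-2}>0$ on the open half-line, supplemented by the elementary estimate $t^p<t$ (the paper writes $(1-t)^p<(1-t)$) to handle the segment with one endpoint at $0$. Your write-up is in fact slightly more explicit than the paper's about why the boundary case needs separate treatment.
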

\begin{proof}
It holds $f''(x) = p(p-1)x^{p-2} >0$ for all $x>0$. 
If $x_0=0$, $x_1>0$ and $0<t<1$, then
$$f(tx_0+(1-t)x_1) = f((1-t)x_1) = (1-t)^p x_1^p < (1-t) f(x_1) = tf(x_0)+(1-t)f(x_1),$$
since $0<1-t<1$ and $(1-t)^p<(1-t)$.
\end{proof}
\begin{cor}
Let $K\subset\R^n$ be a convex set.
If $f:K\to\R$ is convex and twice continuously differentiable, then $H_f(x)\geq 0$ for all $x\in K$.
\end{cor}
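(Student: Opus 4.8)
The plan is to reduce the multivariate statement to the one-variable lemma proved above (a convex, twice continuously differentiable function on a convex subset of $\R$ has nonnegative second derivative) by restricting $f$ to lines. Recall that $H_f(x)\geq 0$ means that the Hessian is positive semidefinite, i.e.\ $v^{T}H_f(x)\,v\geq 0$ for every $v\in\R^n$. So I would fix $x\in K$ and an arbitrary direction $v\in\R^n$, and it suffices to show $v^{T}H_f(x)\,v\geq 0$.

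Next I would introduce the set $I:=\{\,t\in\R : x+tv\in K\,\}$, which is a convex subset of $\R$ containing $0$ (it is the preimage of the convex set $K$ under the affine map $t\mapsto x+tv$), together with the restriction $g:I\to\R$, $g(t):=f(x+tv)$. The function $g$ is convex: for $t_0,t_1\in I$ and $0\leq s\leq 1$ one has $x+(st_1+(1-s)t_0)v = s(x+t_1v)+(1-s)(x+t_0v)$, so the convexity of $f$ gives $g(st_1+(1-s)t_0)\leq s\,g(t_1)+(1-s)\,g(t_0)$; equivalently, $g$ is the restriction of the convex function $f$ to the convex segment $\{x+tv\}$, reparametrized affinely. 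Moreover $g$ is twice continuously differentiable and the chain rule yields the identity $g''(t)=v^{T}H_f(x+tv)\,v$ for $t\in I$.

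Now I would apply the one-variable lemma to $g$, obtaining $g''(0)\geq 0$, that is, $v^{T}H_f(x)\,v\geq 0$. Since $v$ was arbitrary, $H_f(x)$ is positive semidefinite, i.e.\ $H_f(x)\geq 0$; and since $x\in K$ was arbitrary, this proves the corollary.

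The only delicate point — and it is exactly the delicate point already present in the one-variable lemma — is that the argument needs $I$ to contain an open interval around $0$ so that $g''(0)$ is meaningful. This is automatic when $K$ is open, in particular in every application in this article, where $K=\R^{n}$; more generally one reads ``twice continuously differentiable on $K$'' as ``the restriction of a twice continuously differentiable function on an open neighbourhood of $K$'', under which convention the reduction goes through verbatim. There is no computational obstacle: the whole content is the chain-rule identity $g''(0)=v^{T}H_f(x)\,v$ combined with the already established one-dimensional case.
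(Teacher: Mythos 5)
Your proposal is correct and follows exactly the paper's argument: restrict $f$ to the line $t\mapsto x+tv$, observe that the resulting one-variable function is convex, and apply the one-dimensional lemma to identify $v^TH_f(x)v$ with its nonnegative second derivative at $0$. Your extra remark about $0$ needing to be an interior point of $I$ is a legitimate refinement of a detail the paper passes over silently.
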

\begin{proof}
It holds $v^TH_f(x)v = h''(0)\geq 0$ for all $x\in K$ and for all $v\in\R^n$,
since $h(t) := f(x+tv)$ is a convex function.
\end{proof}
\begin{cor}
Let $K\subset \R^n$ be a convex set and $f:K\to\R$ be a differentiable function.
If  $H_f(x)\geq 0$ for all $x\in K$, then $f$ is convex.
If $H_f(x) > 0$ for all $x\in K$, then $f$ is strictly convex.
\end{cor}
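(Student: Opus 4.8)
The plan is to reduce the multivariable statement to the one–dimensional criterion established just above (second derivative nonnegative implies convex, positive implies strictly convex) by restricting $f$ to line segments contained in $K$.

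First I would fix arbitrary points $x_0,x_1\in K$. If $x_0=x_1$ there is nothing to show, so assume $x_0\ne x_1$ and set $v:=x_1-x_0\ne 0$. Since $K$ is convex, the segment $\{x_0+tv:0\leq t\leq 1\}$ lies in $K$, hence the function $h:[0,1]\to\R$, $h(t):=f(x_0+tv)$, is well defined on the convex set $[0,1]\subset\R$. Exactly as in the proof of the preceding corollary, the chain rule gives $h'(t)=\nabla f(x_0+tv)^T v$ and $h''(t)=v^T H_f(x_0+tv)\,v$. Therefore, if $H_f(x)\geq 0$ for all $x\in K$, then $h''(t)\geq 0$ for all $t\in[0,1]$; and if $H_f(x)>0$ for all $x\in K$, then, because $v\ne 0$, we even obtain $h''(t)>0$ for all $t\in[0,1]$.

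Then I would apply the one–dimensional lemma to $h$ on $[0,1]$: from $h''\geq 0$ it follows that $h$ is convex, and from $h''>0$ that $h$ is strictly convex. (The condition ``$f$ convex'' in the statement of that lemma is not used anywhere in its proof — only the existence and sign of the second derivative enter — so this is not circular.) Consequently, for every $t\in[0,1]$,
\[
f(tx_1+(1-t)x_0)=h(t)\leq t\,h(1)+(1-t)\,h(0)=t f(x_1)+(1-t) f(x_0),
\]
using $h(t)=h\bigl(t\cdot 1+(1-t)\cdot 0\bigr)$, with strict inequality for $0<t<1$ in the case $H_f>0$. As $x_0,x_1\in K$ were arbitrary, this is precisely the definition of $f$ being convex, respectively strictly convex.

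The argument is essentially routine; the only point deserving a word of care is the regularity bookkeeping behind the identity $h''(t)=v^T H_f(x_0+tv)\,v$, i.e.\ that the restriction of $f$ to a segment is twice differentiable with the expected second derivative. But this is exactly the identity already invoked in the proof of the corollary immediately preceding, so I would simply reuse it rather than re-derive it.
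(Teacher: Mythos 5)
Your proposal is correct; the paper actually states this corollary without proof, and your argument (restricting $f$ to the segment $t\mapsto f(x_0+tv)$, computing $h''(t)=v^TH_f(x_0+tv)v$, and invoking the one-dimensional second-derivative lemma) is exactly the intended one, mirroring the paper's proof of the immediately preceding corollary. Your observation that the superfluous ``convex'' hypothesis in the one-dimensional lemma is never used, so no circularity arises, is also right.
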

\begin{cor}
For any $\xi\in\R^n$ and any $p\geq 1$ the function 
$\R^n\to\R$ defined by $x\mapsto |\xi^Tx|^p$ is convex.
\end{cor}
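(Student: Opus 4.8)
The plan is to obtain the corollary by assembling statements already proved in this appendix, so essentially no new argument is needed. First I would check that the function $g:\R^n\to\R$ defined by $g(x)=|\xi^Tx|$ is convex. Writing $g(x)=\max\{\xi^Tx,(-\xi)^Tx\}$ and using that $x\mapsto\xi^Tx$ and $x\mapsto(-\xi)^Tx$ are linear, hence convex, convexity of $g$ follows from the lemma that the maximum of two convex functions on a convex set is convex (applied with $K=\R^n$).

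Next I would record that the function $\phi:[0,\infty)\to\R$, $\phi(t)=t^p$, is convex and increasing for every $p\geq 1$. For $p>1$ its convexity is exactly the corollary proved just above; for $p=1$ it is the identity, which is linear and therefore convex. In both cases $\phi$ is increasing on $[0,\infty)$, since $t_1^p-t_0^p\geq 0$ whenever $0\leq t_0\leq t_1$.

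Finally I would invoke the composition lemma with $K_f=[0,\infty)$, $K_g=\R^n$, the outer function $f=\phi$ and the inner function $g$: both sets are convex, $g(\R^n)\subset[0,\infty)=K_f$, $g$ is convex, and $\phi$ is increasing and convex on $K_f$. Hence $\phi\circ g$, which is precisely $x\mapsto|\xi^Tx|^p$, is convex on $\R^n$.

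Since every step is a direct citation of a result already established, there is no substantive obstacle; the only points deserving an explicit word are the degenerate case $p=1$ (where one falls back on the linear identity rather than the strict-convexity corollary) and the monotonicity of $t\mapsto t^p$ on the closed half-line including the endpoint $t=0$. The case $\xi=0$ is covered automatically, as then $g\equiv 0$ and the composition argument still applies.
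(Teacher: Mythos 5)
Your proof is correct and is exactly the assembly of the preceding appendix lemmas (max of linear functions, convexity and monotonicity of $t\mapsto t^p$ on $[0,\infty)$, and the composition lemma) that the paper clearly intends; the paper itself simply omits the proof of this final corollary. Your explicit attention to the case $p=1$ and to $\xi=0$ is a welcome addition, not a deviation.
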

\end{appendix}

\end{document}